\definecolor{myred}{RGB}{251,154,133}
\definecolor{myblue}{RGB}{153,206,227}
\definecolor{mylightblue}{RGB}{0, 150, 255}
\definecolor{mygreen}{RGB}{32, 210, 64}
\definecolor{mygray}{RGB}{220, 220, 220}
\tikzset{snake it/.style={decorate, decoration=snake}}
\newtheorem{theorem}{Theorem}
\newtheorem{lemma}{Lemma}[section]
\newtheorem{remark}{Remark}[section]
\newtheorem{Proposition}{Proposition}[section]
\newtheorem{corollary}{Corollary}[section]
\newtheorem{conjecture}{Conjecture}
\def\beq{ \begin{equation} }
\def\eeq{ \end{equation} }
\def\ep{\varepsilon}
\def\square{\vcenter{\vbox{\hrule height .4pt
  \hbox{\vrule width .4pt height 5pt \kern 5pt
        \vrule width .4pt} \hrule height .4pt}}}
\def\RR{\mathbb{R}}
\def\ZZ{\mathbb{Z}}
\newcommand{\BC}{{\mathbb{C}}}
\newcommand{\BN}{{\mathbb{N}}}
\newcommand{\BP}{{\mathbb{P}}}
\newcommand{\BR}{{\mathbb{R}}}
\newcommand{\BZ}{{\mathbb{Z}}}
\newcommand{\CD}{{\mathcal{D}}}
\newcommand{\CH}{{\mathcal{H}}}
\newcommand{\CR}{{\mathcal{R}}}
\newcommand{\CT}{{\mathcal{T}}}
\newcommand{\CW}{{\mathcal{W}}}
\newcommand{\tbE}{{\textbf{E}}}
\newcommand{\tbP}{{\textbf{P}}}
\newcommand{\bae}{\begin{equation}\begin{aligned}}
\newcommand{\eae}{\end{aligned}\end{equation}}
\newcommand{\ev}{\mathbf{E}}
\newcommand{\si}{\sigma}
\begin{document}

\title{Stabilization of DLA in a wedge}

\author{Eviatar B. Procaccia}
\thanks{The first author was supported by NSF grant DMS-1407558}
\address[Eviatar B. Procaccia]{Department of mathematics, Texas A\&M University}
\urladdr{http://www.math.tamu.edu/~procaccia}
\email{procaccia@tamu.edu}

\author{Ron Rosenthal}
\thanks{The second author was supported by ISF grant 771/17}
\address[Ron Rosenthal]{Department of mathematics, Technion - I.I.T.}
\urladdr{http://ron-rosenthal.net.technion.ac.il}
\email{ron.ro@technion.ac.il}
 
\author{Yuan Zhang}
\address[Yuan Zhang]{Department of mathematics, Texas A\&M University}
\urladdr{http://www.math.tamu.edu/~yzhang1988/}
\email{yzhang1988@math.tamu.edu}

\maketitle
\begin{abstract}
We consider Diffusion Limited Aggregation (DLA) in a two-dimensional wedge. We prove that if the angle of the wedge is smaller than $\pi/4$, there is some $a>2$ such that almost surely, for all $R$ large enough, after time $R^a$ all new particles attached to the DLA will be at distance larger than $R$ from the origin. This means that DLA stabilizes in growing balls, thus allowing a definition of the infinite DLA in a wedge via a finite time process.   
\end{abstract}

\tableofcontents

%***********************************************************************************************************************************************************
%***********************************************************************************************************************************************************

\section{Introduction}
	{Diffusion Limited Aggregation} (DLA) was introduced in 1983 by E. Witten and L. M. Sander \cite{witten1983} in order to study the geometry and dynamics of physical aggregation systems governed by diffusive laws. On the Euclidean lattice $\BZ^2$, DLA is a random process $(A_n)_{n\geq 0}$ of growing subsets of $\mathbb{Z}^2$, which are defined recursively. Typically, one fixes  $A_0:=\{(0,0)\}$, and given $A_n$, defines $A_{n+1}:=A_n\cup\{a_{n+1}\}$, where $a_{n+1}$ is a point sampled according to the harmonic measure of $\partial A_n $ from infinity. More precisely, $a_{n+1}$ is the first hitting place of $\partial A_n$ (the outer boundary of $A_n$) by a simple random walk started from distance $R$, in the limit $R\to\infty$ (See Section \ref{sec:pre} for the precise definition).

In this paper we study DLA in a two-dimensional wedge 
\[
	W_{\theta_1, \theta_2}=\big\{(x,y)\in\BZ^2: \arctan (y/x)\in[\theta_1,\theta_2],\, x\geq 0\big\},
\]
where $-\pi/2\le \theta_1<\theta_2\le \pi/2$. Here we used the convention that $(0,0)$ belongs to all wedges and that $\arctan(y/0)$ equals $\pi/2$ for $y>0$ and $-\pi/2$ for $y<0$. In Appendix \ref{section: harmonic} we prove the existence of the harmonic measure in $W_{\theta_1, \theta_2}$ which is needed for the definition of the DLA in the wedge. 

For $R>0$, let $B_R=\{(x,y)\in\BZ^2 ~:~ x^2+y^2<R^2\}$ be the discrete Euclidean ball of radius $R$ around the origin,  and define $W_{\theta_1,\theta_2}^R= W_{\theta_1,\theta_2}\cap B_R$. Throughout the paper, we consider $W_{\theta_1,\theta_2}$ and $W^R_{\theta_1,\theta_2}$ for $R>0$ as graphs, with vertices $W_{\theta_1,\theta_2}$ and $W^R_{\theta_1,\theta_2}$ respectively and edges induced from the graph $\BZ^2$. We denote by $\mathbf{P}_{\theta_1,\theta_2}^x$ the law of a simple random walk $(S_n)_{n\geq 0}$ in the graph $W_{\theta_1,\theta_2}$, starting from $x$ and for $B\subset W_{\theta_1,\theta_2}$, denote by $\tau_B^+=\inf\{n\geq 1~:~S_n\in B\}$ the first return time of the random walk into the set $B$. Finally, we set $\BP=\BP_{\theta_1,\theta_2}$ to be the law of the DLA $(A_n)_{n\geq 0}$ in $W_{\theta_1,\theta_2}$ (see Section \ref{sec:pre} for the formal definition). For future use, for $n\geq 1$ we denote by $a_n$ the particle added to the aggregate at time $n$, namely the unique vertex in $W_{\theta_1,\theta_2}$ such that $A_n = A_{n-1}\cup \{a_n\}$.

Our main result is the stabilization of the DLA in sufficiently sharp wedges.
\begin{theorem}\label{thm:main}
	Assume $-\pi/2\le \theta_1<\theta_2\le \pi/2$ satisfy $\theta_2-\theta_1<\pi/4$ and fix $a>\frac{2\pi+4(\theta_2-\theta_1)}{\pi-4(\theta_2-\theta_1)}$. Then $\BP_{\theta_1,\theta_2}$-almost surely, for every $R>0$ sufficiently large, the random sets $(A_n\cap B_R)_{n\geq R^a}$ are all the same. In other words, for all $R$ sufficiently large, none of the particles $(a_n)_{n\geq R^a}$ added to the system after time $R^a$ will attach to the aggregate inside $W^R_{\theta_1,\theta_2}$. 
\end{theorem}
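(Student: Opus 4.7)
The plan is a first-moment/Borel--Cantelli argument across dyadic radii $R = 2^k$. I will show that the expected number of particles $a_n$ with $n \geq R^a$ landing inside $B_R$ decays as a strictly negative power of $R$ along this subsequence; Borel--Cantelli then gives stabilization on dyadic scales, and monotonicity of $B_R$ in $R$ extends the conclusion to all large $R$.

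The argument rests on two ingredients. The first is a deterministic lower bound on the growth rate: since $|A_n| = n+1$ and $A_n \subseteq W_{\theta_1,\theta_2} \cap B_{\diam(A_n)}$, a set of cardinality of order $(\theta_2-\theta_1)\,\diam(A_n)^2$, one gets $\diam(A_n) \geq c\sqrt{n}$ with $c = c(\theta_1,\theta_2) > 0$. The second, harder, ingredient is a Beurling-type harmonic-measure estimate in the wedge. Writing $\beta := \theta_2 - \theta_1$, the target is that for every finite $A \subset W_{\theta_1,\theta_2}$ with $0 \in A$ and $\diam(A) \geq D$,
\[
  \nu_A(A \cap B_R) \;\leq\; C\,R\,(R/D)^{\pi/(2\beta)},
\]
where $\nu_A$ is the harmonic measure from infinity built in Appendix~\ref{section: harmonic}. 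The exponent $\pi/(2\beta)$ is the signature of the conformal map $z \mapsto z^{\pi/\beta}$ that straightens the wedge into a half-plane: under this map $B_R$ has size $R^{\pi/\beta}$, the aggregate has "diameter" $D^{\pi/\beta}$, and the classical half-plane Beurling estimate (equivalently the equilibrium density $1/\sqrt{x(D'-x)}$ on a slit) gives a $\sqrt{R'/D'} = (R/D)^{\pi/(2\beta)}$ penetration bound. The polynomial-in-$R$ prefactor appears when one turns this continuous heuristic into a uniform-in-$A$ discrete bound, for instance via a union bound over the $O(R)$ first-entry points of the walk into $\partial B_R \cap W_{\theta_1,\theta_2}$.

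Combining the two ingredients yields, for every $n \geq R^a$,
\[
  \BP[a_n \in B_R \mid A_{n-1}] \;\leq\; C\,R\,(R/\sqrt{n})^{\pi/(2\beta)}.
\]
Because $\beta < \pi/4$ we have $\pi/(4\beta) > 1$, so the tail series over $n$ converges and
\[
  \sum_{n \geq R^a}\BP[a_n \in B_R \mid A_{n-1}] \;\leq\; C\,R^{\,1+\pi/(2\beta)+a(1-\pi/(4\beta))}.
\]
Multiplying the exponent on the right by $4\beta$ gives $4\beta + 2\pi + 4a\beta - a\pi = (2\pi+4\beta) - a(\pi-4\beta)$, which is strictly negative precisely when $a > (2\pi+4\beta)/(\pi-4\beta)$. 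Summing the resulting negative-exponent bound over $R = 2^k$ and applying Borel--Cantelli produces stabilization on the dyadic subsequence, and a sandwich between consecutive dyadic scales then yields the conclusion for all sufficiently large real $R$.

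The main obstacle is the Beurling-type harmonic-measure bound. Turning the continuous conformal picture into a clean discrete inequality that is uniform over all admissible aggregates $A$ demands serious potential-theoretic work: one must combine the harmonic-measure construction from Appendix~\ref{section: harmonic} with a discrete analogue of the $z^{\pi/\beta}$ straightening, track both the wedge exponent $\pi/(2\beta)$ and the polynomial-in-$R$ prefactor through the discrete estimates, and handle possibly irregular shapes of $\partial A$ near $\partial B_R$. The role of the threshold $\beta < \pi/4$ is precisely that below it the harmonic-measure decay $(R/D)^{\pi/(2\beta)}$ is strong enough to beat the diffusive growth $D_n \gtrsim \sqrt{n}$ after summation, which is also why the allowed range of $a$ blows up as $\beta \uparrow \pi/4$.
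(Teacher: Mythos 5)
Your proposal is correct in outline and reaches the stated threshold for $a$, but it organizes the first-moment computation differently from the paper. The paper fixes a geometric sequence of radii $L_i=M^iR^b$, lets $\si_i$ be the exit time of the aggregate from $B_{L_i}$, bounds the number of particles arriving in $[\si_i,\si_{i+1})$ crudely by the volume $\pi L_{i+1}^2$, and applies the wedge Beurling estimate with aggregate diameter $L_i$ to each such particle; summing over $i$ and then over integer $R$ forces the exponent below $-1$, which is why the paper introduces the auxiliary parameter $b$. You instead use the deterministic growth bound $\diam(A_{n-1})\ge c\sqrt{n}$ (valid since $|A_{n-1}|=n$ and the aggregate is a connected subset of the wedge containing the origin) to get a per-step bound $\BP[a_n\in B_R\mid A_{n-1}]\le CR\,(R/\sqrt{n})^{\pi/(2\beta)}$ and sum directly over $n\ge R^a$; working along dyadic radii you only need the resulting exponent to be negative, and your computation $(2\pi+4\beta)-a(\pi-4\beta)<0$ recovers exactly the constant in the theorem. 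Both routes rest on the same hard ingredient, the discrete Beurling estimate in the wedge (Theorem \ref{thm:Beu_estimate}/Corollary \ref{cor:beurlingharm}), which you correctly identify as the main obstacle and treat as a black box; note only that the version actually proved carries an $\ep$ loss in the exponent and a $\log L$ factor, i.e.\ $C(r/L)^{\pi/(2\beta)-\ep}\,r\log L$, and requires augmenting the aggregate by $W^r_{\theta_1,\theta_2}$ so that hitting $B_R$ is dominated by hitting $\partial W^R_{\theta_1,\theta_2}$ first. Your argument is robust to these losses because $a$ exceeds the threshold strictly and $\pi/(4\beta)>1$ strictly. Two small points to make explicit: the conditional bound is uniform in $A_{n-1}$, so it passes to unconditional probabilities and a plain union bound (no conditional Borel--Cantelli machinery is needed); and the sandwich between dyadic scales does not work verbatim with the same exponent $a$ (for $2^k\le R<2^{k+1}$ the dyadic statement at scale $2^{k+1}$ only covers $n\ge 2^a R^a$), so you should run the dyadic argument with some $a'\in\big(\tfrac{2\pi+4\beta}{\pi-4\beta},a\big)$ and use $(2^{k+1})^{a'}\le R^{a}$ for large $R$. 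With those repairs the proof is complete and, if anything, matches the stated range of $a$ more tightly than the paper's own argument.
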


The main tool in proving Theorem \ref{thm:main} is a discrete Beurling estimate for random walk in a wedge, which enables us to control the harmonic measure of finite, connected subsets of $W_{\theta_1,\theta_2}$. Unlike in the work of H. Kesten \cite{kesten1987}, the proof of the discrete Beurling estimate here does not rely on Green function calculations. 

For $A\subset W_{\theta_1,\theta_2}$ denote by $\partial A=\{y\in W_{\theta_1,\theta_2}\setminus A ~:~ \exists x\in A \text{ such that }\|x-y\|_1=1\}$ the outer boundary of $A$.

\begin{theorem}\label{thm:Beu_estimate}
	Fix $-\pi/2\le \theta_1<\theta_2\leq \pi/2$. For every $\ep>0$, there exists $M\in\BN$ and $C\in (0,\infty)$ such that for every $r,L\in\BN$ satisfying $r\geq M$ and $L/r\geq M$, every $R>0$ sufficiently large (depending on $\ep$ and $L$), every connected subset $A\subset W_{\theta_1,\theta_2}$, such that $A\cap \partial W_{\theta_1,\theta_2}^L \neq \emptyset$ and every $x\in \partial W^R_{\theta_1,\theta_2}$
	\begin{equation}\label{eq:Beu_est_1}
%	\frac{1}{\left|\partial W^R_{\theta_1, \theta_2} \right|}\sum_{x\in %\partial W^R_{\theta_1, \theta_2}} 
	\mathbf{P}^x_{\theta_1, \theta_2}\left(\tau^+_{ \partial W^r_{\theta_1, \theta_2}}\le\tau^+_{\partial A}\right)\le  C\left(\frac{r}{L} \right)^{\frac{\pi}{2(\theta_2-\theta_1)}-\ep}r \log L
	\end{equation}
\end{theorem}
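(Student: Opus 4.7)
The overall plan is to combine a multiscale dyadic iteration along the radial direction with a one-step annular-sector estimate that extracts the wedge-angle exponent $\beta:=\pi/(2(\theta_2-\theta_1))$; write $\alpha:=\theta_2-\theta_1$ throughout.

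\emph{Step 1: reduction to entry near $\partial W^L_{\theta_1,\theta_2}$.} Since $R$ is taken to infinity with $L$ fixed, I would invoke the convergence of the walk's first-entry distribution on $\partial W^L_{\theta_1,\theta_2}$ to the harmonic measure from infinity in the wedge, whose existence is established in Appendix~\ref{section: harmonic}. This reduces the problem to controlling, for a walk started from a fixed distribution on $\partial W^L_{\theta_1,\theta_2}$, the probability of reaching $\partial W^r_{\theta_1,\theta_2}$ before hitting $\partial A$. The polynomial prefactor $r\log L$ is produced by this reduction: SRW on the wedge graph is recurrent, so the entry distribution from $\partial W^R_{\theta_1,\theta_2}$ decouples from the specific starting vertex $x$ only at a logarithmic rate in $R/L$, and the ratio of potential-kernel values at scales $r$ and $L$ contributes the $r\log L$ normalization.

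\emph{Step 2: dyadic multiscale decomposition.} Set $K:=\lfloor\log_2(L/r)\rfloor$ and $r_k:=2^k r$, and consider the annular sectors $\CA_k:=W^{r_{k+1}}_{\theta_1,\theta_2}\setminus W^{r_k}_{\theta_1,\theta_2}$. After separating off the easy case where $A$ does not penetrate into $W^r_{\theta_1,\theta_2}$ (in which the bound follows from a bare wedge-escape estimate, without using connectedness of $A$), the connectedness of $A$ combined with $A\cap\partial W^L_{\theta_1,\theta_2}\neq\emptyset$ forces $A\cap\CA_k$ to be a crossing of each annular sector, from $\partial W^{r_k}_{\theta_1,\theta_2}$ to $\partial W^{r_{k+1}}_{\theta_1,\theta_2}$. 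The iteration will then build up a product of one-step crossing probabilities.

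\emph{Step 3: single-scale crossing estimate.} The heart of the argument is to show that for each $k$ and each $y\in\partial W^{r_{k+1}}_{\theta_1,\theta_2}$,
\[
\mathbf{P}^y_{\theta_1,\theta_2}\bigl(\tau^+_{\partial W^{r_k}_{\theta_1,\theta_2}}\le\tau^+_{\partial A\,\cup\,\partial W^{r_{k+2}}_{\theta_1,\theta_2}}\bigr)\le\lambda,\qquad \lambda=2^{-\beta+\ep/2}.
\]
I would prove this by three inputs: (i) planarity together with connectedness of $A\cap\CA_k$, which confines the walk to a single Jordan component of $\CA_k\setminus A$ until it hits $\partial A$; (ii) a monotonicity/coupling argument for wedge harmonic measure, pushing the crossing into a worst case flush with one of the two radial sides of $\CA_k$, which reduces the estimate to a clean annular sub-wedge of angle at most $\alpha$; and (iii) a direct optional-stopping computation in this clean sub-wedge, using the discrete analogue of the wedge-harmonic function $u(z)=\operatorname{Im}(z^{\pi/\alpha})$, which vanishes on the radial sides, is approximately harmonic for SRW in the interior, and yields the exponent $\beta$ without invoking Green function asymptotics. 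Iterating via the strong Markov property over $k=0,\ldots,K-1$ gives $\lambda^K\le(r/L)^{\beta-\ep/2}$, and combining with the Step~1 prefactor delivers the bound of Theorem~\ref{thm:Beu_estimate}.

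\emph{Main obstacle.} The main difficulty lies in Step~3 with the near-sharp constant $\lambda=2^{-\beta+\ep/2}$: a crude crossing argument yields only $\lambda<1$ bounded away from $1$ by an absolute constant independent of $\alpha$, producing a suboptimal exponent. Obtaining the correct wedge exponent $\beta$ requires genuinely exploiting the wedge geometry through the harmonic-function comparison described above, and certifying that the discretized version of $\operatorname{Im}(z^{\pi/\alpha})$ is only $O(\ep)$-defective as a harmonic function for SRW is what forces the hypotheses $r\ge M$ and $L/r\ge M$ with $M=M(\ep)$ sufficiently large. Propagating these small defects cleanly through $K\asymp\log(L/r)$ iterations without destroying the exponent is the other technical point the proof must handle.
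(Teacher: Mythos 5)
Your Steps 2--3 contain a genuine gap: the single-scale estimates with an outer absorbing barrier at $\partial W^{r_{k+2}}_{\theta_1,\theta_2}$ do not chain via the strong Markov property into a bound on the full inward-crossing probability. The walk is recurrent, so after retreating past $\partial W^{r_{k+2}}_{\theta_1,\theta_2}$ it returns and retries; accounting for these retries costs a multiplicative factor bounded away from $1$ at every one of the $\asymp\log_2(L/r)$ scales, which destroys the exponent. If instead you drop the barrier so that the factors do chain, the per-scale bound $\lambda=2^{-\beta+\ep/2}$ is simply false: for the worst case ($A$ equal to one boundary ray), the probability of travelling from radius $L$ to radius $L/2$ before hitting $A$, with no outer barrier, is of order a constant (roughly $1/\beta$ after conformally unfolding), not $2^{-\beta}$, precisely because the walk may make arbitrarily many excursions far beyond radius $L$ where $A$ is absent. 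The paper circumvents this by \emph{reversing time}: the harmonic measure of a point $y\in\partial W^r_{\theta_1,\theta_2}$ is rewritten as the ratio of an \emph{outward} escape probability $\mathbf{P}^y_{\theta_1,\theta_2}(\tau^+_{\partial W^L_{\theta_1,\theta_2}}<\tau^+_{\overline{A}})$ (Lemma \ref{lemma_escape_2}) to a return-type probability bounded below by $c/\log L$ via effective resistance (Lemma \ref{lemma_upper_bound_return}). Since $\overline{A}$ contains $\partial W^r_{\theta_1,\theta_2}$, the outward walk can never fall back to scale $r$, which is what anchors the multiscale product. This is also where the prefactor $r\log L$ actually comes from --- a union bound over the $O(r)$ points of $\partial W^r_{\theta_1,\theta_2}$ and the resistance bound --- not from the entry-distribution considerations you describe in Step~1, which as stated do not yield any quantitative factor.

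Two further points. First, your worst-case reduction matches the paper's Lemma \ref{lemma push to boundary}, but in that worst case one radial side carries the absorbing set $A$ while the other is the \emph{reflecting} graph boundary of the wedge; the relevant positive harmonic function, after unfolding across the Neumann side, is $\operatorname{Im}(z^{\pi/(2\alpha)})$, not $\operatorname{Im}(z^{\pi/\alpha})$. Using the Dirichlet--Dirichlet function would treat the reflecting side as absorbing and thereby \emph{underestimate} the crossing probability, so it cannot serve as an upper bound. Second, even with the correct function, extracting the near-sharp constant $2^{-\beta+\ep/2}$ per dyadic scale is unnecessarily delicate; the paper instead takes a large scale ratio $K_0=K_0(\ep)$ so that the per-scale bound $CK_0^{-\pi/2\varphi}$ from Lemmas \ref{lem:cont_prob} and \ref{lemma connect} loses only $\log_{K_0}(C+1)<\ep/4$ in the exponent, which is the role of the hypothesis $L/r\geq M$ with $M$ large.
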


%**********************************************************************************************************************************************************
%**********************************************************************************************************************************************************

\section{Discussion and open problems}\label{sec:disc}
	Since the introduction of DLA in 1983, rigorous understanding of the model was limited. The main exception being H. Kesten's upper bound on the growth rate \cite{kesten1987}, see also \cite{BY17}. Lately, similar results were obtained for DLA in the upper half plane with Dirichlet boundary conditions \cite{procaccia2017harmonic,procaccia2017zeroharmonic}. The main technical difference between this paper and previous works is that the former does not use any Green function calculations. The main reason is the lack of control over the discrete Green function in the wedge that would allow hitting probability calculations (See \cite{ganguly2017convergence} for the best known control in the case of general Neumann boundary conditions). 

There are many interesting open questions regarding DLA. First natural questions are about the growth rate, the fractal dimension and the relation between the two (see \cite{halsey1986scaling}). For these important questions our paper does not add to the discussion. Another natural question is about the number of arms in DLA growing in a wedge (or in $\BZ^2$). The physics literature does not provide clear conjectures or even definitions for the number of arms in a wedge. In \cite{kessler1998diffusion}, D. A. Kessler, Z. Olami, J. Oz, I. Procaccia, E. Somfai and L. M. Sander claim evidence for a critical angle $\nu$ between 120 and 140 degrees which guarantees coexistence of two arms in a wedge of angle $\nu$. 

One immediate contribution of our result is to provide a method to sampling the DLA in $W^R_{\theta_1,\theta_2}$, for every finite $R>0$, via a finite time random process. By Theorem \ref{thm:main} there is some $a>0$ such that for any $R$ large enough almost surely the sets $(A_n\cap B_R)_{n\ge R^a}$ are all the same. As a result, for all $R>0$ sufficiently large, we can define the DLA in $W^R_{\theta_1,\theta_2}$ to be $A_{R^a}\cap B_R$, which is a finite time random process. 

Returning to discuss the number of arms, since the sets $(A_{R^a}\cap B_R)$ are monotonic increasing in $R$, we can define
\[
	A_\infty= \bigcup_{n=0}^\infty A_n = \lim_{R\rightarrow\infty}(A_{R^a}\cap B_R).
\]

Let $\gimel$ be an infinite graph. The number of ends of $\gimel$ is defined to be the supremum on the number of infinite, connected components of $\gimel\setminus K$, where we run over all finite $K\subset \gimel$. Hence, one can define the number of arms of the DLA as the number of ends of the graph $\gimel=A_\infty$. Due to the fact that $A_\infty$ can be written as the limit of the sets $A_{R^a}\cap B_R$, we can erase a finite set $K$ in finite times and only look on the dynamics after such times.

\begin{conjecture}
	There exists $\theta_0\in (0,2\pi)$ such that for any $\theta\in (0,\theta_0)$, $A_\infty$ has only one arm.
\end{conjecture}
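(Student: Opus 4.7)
The plan is to use the discrete Beurling estimate of Theorem~\ref{thm:Beu_estimate} as a pointwise bound on the probability that each newly added particle $a_n$ lands inside $B_R$, then sum over $n\ge R^a$ and close with Borel--Cantelli. Throughout, write $\alpha=\theta_2-\theta_1$ and $\gamma=\pi/(2\alpha)$, so that the hypothesis $\alpha<\pi/4$ is precisely $\gamma>2$.

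The first step is a deterministic lower bound on the outer reach of the aggregate. Since $A_{n-1}$ is connected, contains the origin, has $|A_{n-1}|=n$, and $|W^L_{\theta_1,\theta_2}|=O(\alpha L^2)$, we have $A_{n-1}\cap\partial W^{\ell}_{\theta_1,\theta_2}\ne\emptyset$ for every $\ell\le c\sqrt{n/\alpha}$. Given $A_{n-1}$, the particle $a_n$ is distributed according to harmonic measure from infinity on $\partial A_{n-1}$, so the event $\{a_n\in B_R\}$ is contained in the event that the random walk reaches $\partial W^R_{\theta_1,\theta_2}$ strictly before first returning to $\partial A_{n-1}$. Applying Theorem~\ref{thm:Beu_estimate} with $r=R$ and $L=\lfloor c\sqrt{n/\alpha}\rfloor$, uniformly over walk starting point $x\in\partial W^{R'}_{\theta_1,\theta_2}$, and then passing to the limit $R'\to\infty$ (using the harmonic measure construction of Appendix~\ref{section: harmonic}), one obtains for every fixed $\epsilon>0$ and all sufficiently large $R$ and $n\ge R^a$
\[
\BP\bigl(a_n\in B_R\,\big|\,A_{n-1}\bigr)\;\le\; C\Bigl(\frac{R}{\sqrt{n}}\Bigr)^{\gamma-\epsilon}R\log n.
\]

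The remainder is a direct arithmetic check. Summing over $n\ge R^a$, the hypothesis $\gamma>2$ makes the series convergent (for $\epsilon$ small) and yields
\[
\BP\bigl(\exists\,n\ge R^a:\ a_n\in B_R\bigr)\;\le\; C\,R^{\,\gamma(1-a/2)+(1+a)+O(\epsilon)}\log R.
\]
The exponent is strictly negative for some $\epsilon>0$ exactly when $(a-2)\gamma>2(a+1)$, which rearranges to $a>(2\pi+4\alpha)/(\pi-4\alpha)$, matching our hypothesis on $a$. Thus the probability decays polynomially in $R$. Borel--Cantelli along the dyadic sequence $R_k=2^k$, combined with the monotonicity $B_R\subset B_{2R_k}$ and $R^a\ge R_k^a$ for $R\in[R_k,2R_k)$, then upgrades this to the desired almost sure statement for all sufficiently large $R$.

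The main obstacle I expect is the harmonic-measure step: Theorem~\ref{thm:Beu_estimate} requires the walk's starting radius to be large \emph{depending on $L$}, so one has to carefully interchange the limit $R'\to\infty$ with the aggregate radius $L=L_n$ that is itself growing with $n$. Handling this properly requires invoking the explicit construction of harmonic measure from infinity in Appendix~\ref{section: harmonic}, together with a monotone-in-$R'$ limiting argument to transfer the pointwise bound to an expectation under harmonic measure. The volume-based lower bound on $L_n$ and the dyadic Borel--Cantelli step are, by comparison, routine.
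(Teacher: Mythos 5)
The statement you have been asked to prove is an open conjecture: the paper does not prove it, and in Section~\ref{sec:disc} it explicitly explains why its results fall short of it. What you have written is, in substance, a proof of Theorem~\ref{thm:main} (stabilization of $A_n\cap B_R$ after time $R^a$), not of the one-arm property. As a route to stabilization your argument is a reasonable variant of the paper's: instead of decomposing over the successive exit times $\sigma_i$ from balls of radii $L_i=M^iR^b$ and bounding $\ev[|(A_{\sigma_{i+1}}\setminus A_{\sigma_i})\cap B_R|]$ scale by scale, you condition on $A_{n-1}$, use the volume bound $|W^\ell_{\theta_1,\theta_2}|=O(\ell^2)$ to force $A_{n-1}$ to reach radius $c\sqrt{n}$, and apply the Beurling estimate particle by particle; your exponent arithmetic does reproduce the threshold $a>\frac{2\pi+4(\theta_2-\theta_1)}{\pi-4(\theta_2-\theta_1)}$ of Theorem~\ref{thm:main}, and the issue you flag about interchanging the $R'\to\infty$ limit with the growing radius $L$ is exactly what Corollary~\ref{cor:beurlingharm} is for.

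But stabilization does not imply one arm, and this is the genuine gap. Having one arm means that for every finite $K$ the graph $A_\infty\setminus K$ has a single infinite connected component. Your argument rules out late particles attaching inside a fixed ball $B_R$; it does not rule out two (or more) branches of the aggregate both growing to infinity while remaining at comparable radii, in which case $A_\infty$ would have two ends even though every ball stabilizes. The Beurling estimate only kills a branch that lags polynomially behind the leading tip --- this is precisely the observation made in Section~\ref{sec:disc} --- and to conclude that one of two competing branches must eventually lag by a polynomial factor one needs a lower bound on the growth rate of the aggregate, which neither the paper nor your argument provides. That missing lower bound is why the statement remains a conjecture rather than a theorem.
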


\begin{remark}
Computer simulations seem to suggest that $\theta_0$ is smaller than $\pi/4$. See Figure \ref{fig:arms}.
\end{remark}

\begin{figure}[h!]
\includegraphics[height=4cm]{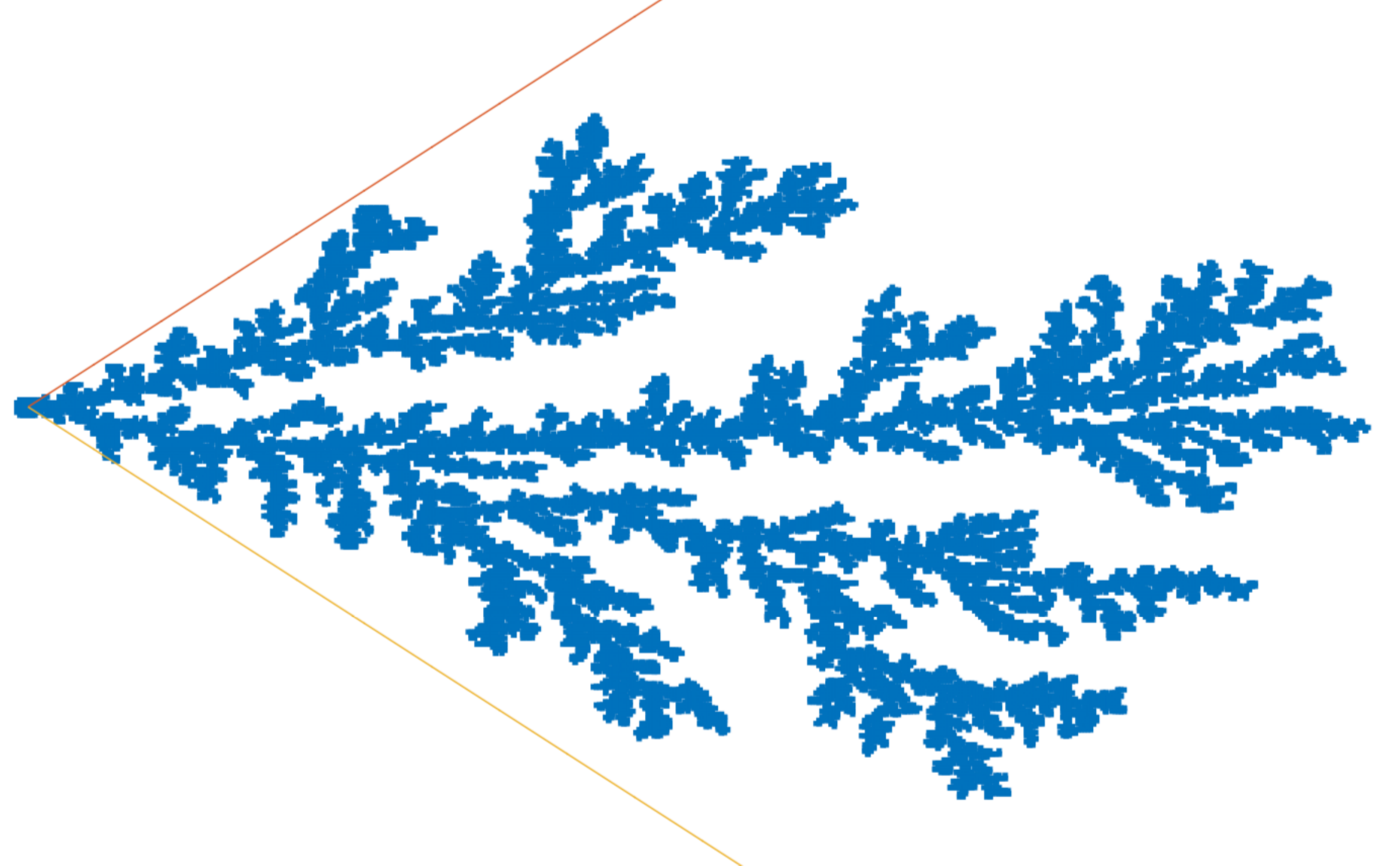}
\includegraphics[height=4cm]{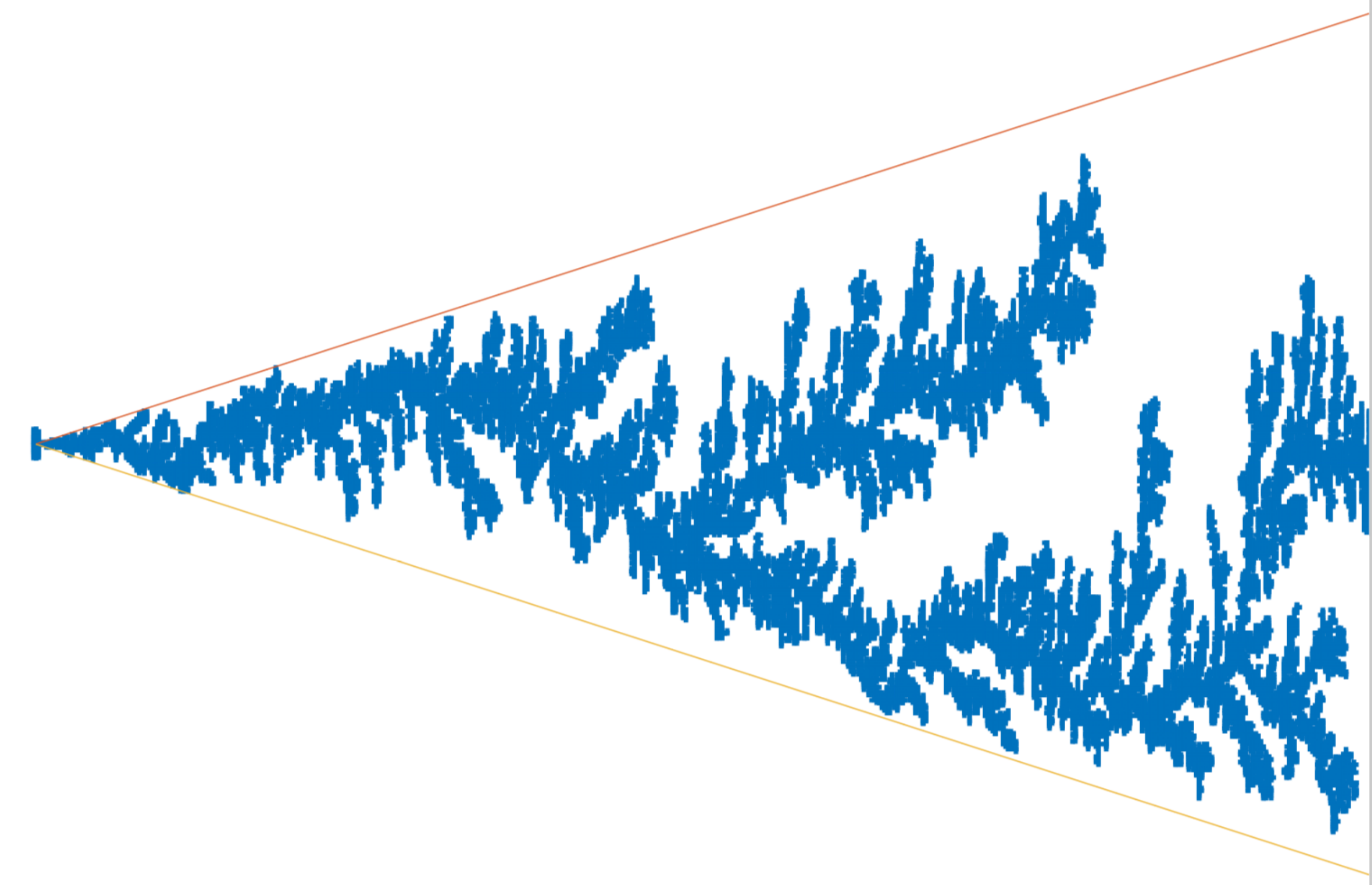}
\caption{DLA in wedges of angles $\pi/4$ and $\pi/40$ (angles look larger because of horizontal stretching for visability). \label{fig:arms}}
\end{figure}

From our results one can deduce that if in a wedge of angle smaller than $\pi/4$, there are two connected components and one is behind the other by a polynomial order, then asymptotically the smaller component will cease to grow. However, without a lower bound on the growth rate this is not enough to prove the conjecture. That being said we would like to suggest one more ambitious conjecture for the DLA in a wedge. Define the growth rate of $(A_n)_{n\geq 0}$, denote $\mathrm{gr}((A_n)_{n\geq 0})$ by 
\[
	\mathrm{gr}((A_n)_{n\geq 0})=\sup\left\{\beta\ge1/2:\limsup_{n\rightarrow\infty}\frac{\text{diam}(A_n)}{n^\beta}>0\right\}
,\]
where diam stands for the diameter of the set in the Euclidean distance. 
\begin{conjecture}
	The growth rate is $\BP_{\theta_1,\theta_2}$-almost surely a constant and as $\theta\rightarrow 0$, it converges to $1$.
\end{conjecture}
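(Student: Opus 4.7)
The plan is to treat the two claims separately: almost-sure constancy of the growth rate, and the convergence $\mathrm{gr} \to 1$ as $\theta := \theta_2 - \theta_1 \to 0$. The upper bound $\mathrm{gr}((A_n)_{n\ge 0}) \leq 1$ is immediate since each new particle is adjacent to the existing aggregate and hence $\diam(A_n) \leq n$. The substance lies therefore in (a) showing $\mathrm{gr}$ is $\BP_{\theta_1,\theta_2}$-almost surely constant, and (b) producing a lower bound $\diam(A_n) \geq c(\theta)\,n$ with $c(\theta) \to 1$ as $\theta \to 0$.

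For (b) I would lean on the discrete Beurling estimate (Theorem \ref{thm:Beu_estimate}) combined with the stabilization result (Theorem \ref{thm:main}). Let $r_n := \max\{|x| : x \in A_n\}$ denote the radial extent of the aggregate. The heuristic is that for $\theta$ small, the incoming particle $a_{n+1}$ attaches at distance $O(1)$ from the tip of $A_n$ with overwhelming probability: the Beurling exponent $\pi/(2(\theta_2-\theta_1))$ blows up as $\theta \to 0$, so a random walk descending from $\partial W^R_{\theta_1,\theta_2}$ with $R \gg r_n$ has a vanishingly small chance of hitting any fixed small ball sitting well inside $W^{r_n}_{\theta_1,\theta_2}$ before being absorbed by the outermost part of $A_n$. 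Converting this into a statement about the harmonic measure carried by $\partial A_n$, one expects that conditional on $A_n$, the next particle produces $r_{n+1} = r_n + 1$ with probability at least $1 - \epsilon(\theta)$, where $\epsilon(\theta) \to 0$. Summing these one-step increments yields $r_n \geq (1-\epsilon(\theta))\,n$ for all $n$ large, $\BP_{\theta_1,\theta_2}$-almost surely, and since $\diam(A_n) \geq r_n$, this gives $\mathrm{gr} \geq 1-\epsilon(\theta)$ for every such $\epsilon(\theta)$, hence $\mathrm{gr} = 1$.

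For (a) the natural route is a zero-one law. The growth rate is an asymptotic quantity, measurable with respect to the tail $\sigma$-algebra $\bigcap_N \sigma(A_n : n \geq N)$. By Theorem \ref{thm:main}, for each $R$ the aggregate inside $B_R$ is frozen by time $R^a$, so conditionally on $A_{R^a}\cap B_R$ the future dynamics depend on this configuration only through a neighborhood of $\partial B_R \cap A_{R^a}$. I would attempt to build a dyadic decomposition of scales $R_k = 2^k$ and to show that perturbing $A_{R_k^a}\cap B_{R_k}$ at any finite collection of scales leaves the distribution of the asymptotic growth rate invariant. Concretely, couple two DLAs whose early-stage configurations disagree only inside a common $B_{R_k}$, run them independently, and use the Beurling estimate to argue that on much larger scales the two aggregates become indistinguishable in the sense that the associated harmonic measures on $\partial W^{R_\ell}_{\theta_1,\theta_2}$ are close in total variation for $\ell \gg k$. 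A successful coupling of this type would feed into a Kolmogorov-style argument and yield constancy of $\mathrm{gr}$.

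The main obstacle is (a). DLA possesses no obvious independence or symmetry structure with respect to the particle index, and proving that the growth rate is tail-measurable in a usable sense requires a quantitative \emph{loss of memory across scales}, which is tightly bound up with the (widely open) existence of scaling limits for DLA. Part (b) should be more tractable given the Beurling estimate already in hand, but even there one needs a refinement that pinpoints \emph{where} on $\partial A_n$ the incoming particle attaches rather than merely whether it attaches at all; extracting this from Theorem \ref{thm:Beu_estimate} should be possible by partitioning $\partial A_n$ into local pieces and applying the estimate piece by piece, while carefully tracking the dependence of the constants on $\theta$ so as to confirm that $\epsilon(\theta) \to 0$.
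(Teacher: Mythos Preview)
The statement you are attempting to prove is a \emph{conjecture}: the paper offers no proof, and indeed no lower bound on the growth rate of DLA (in a wedge or otherwise) is currently known. Your proposal is therefore not being compared against an existing argument but against an open problem, and it should be read as a heuristic outline rather than a proof.

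The serious gap is in part (b). The Beurling estimate (Theorem~\ref{thm:Beu_estimate} / Corollary~\ref{cor:beurlingharm}) only tells you that the harmonic measure of $\partial A_n$ carried by a ball $B_r$ well inside the aggregate is small. It says nothing about where on the \emph{outer} portion of $\partial A_n$ the incoming particle lands. In particular, even when the exponent $\pi/(2\theta)$ is enormous, a particle that attaches within $O(1)$ of the tip can attach to the \emph{side} of the aggregate rather than to its outermost vertex, leaving $r_{n+1}=r_n$. Your claim that $r_{n+1}=r_n+1$ with probability at least $1-\epsilon(\theta)$ is therefore not a consequence of Beurling; it would require controlling the fine distribution of harmonic measure near the tip, which is exactly the kind of information that is unavailable for DLA. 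Note, too, that your argument would give $\mathrm{gr}=1$ for every fixed small $\theta$ (any linear lower bound $r_n\ge cn$ with $c>0$ already forces $\mathrm{gr}=1$ by the definition of the growth rate), which is strictly stronger than the conjecture and would in particular resolve the long-standing absence of any nontrivial lower bound on DLA growth.

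For part (a) you correctly flag the difficulty: there is no known independence or regeneration structure for DLA across scales, and the coupling you sketch---matching two aggregates that differ inside $B_{R_k}$ so that their harmonic measures become close in total variation at much larger scales---would amount to a quantitative loss-of-memory statement that is essentially equivalent to the existence of a scaling limit. The paper explicitly refrains from claiming anything of this sort; the stabilization theorem freezes the aggregate inside $B_R$ but gives no control over how the configuration there influences the future geometry.
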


%**********************************************************************************************************************************************************
%**********************************************************************************************************************************************************

\section{Formal definition of the DLA process in a wedge}\label{sec:pre}

This section is devoted to the formal definition of the DLA process in a wedge. In particular, we state a result regarding the existence of the harmonic measure from infinity in it, whose proof we postpone to Appendix \ref{section: harmonic}. 

For $x=(x_1,x_2)\in\BZ^2$ we denote by $\|x\|_2=\sqrt{x_1^2+x_2^2}$ it Euclidean distance from the origin. Fix $-\pi/2\leq \theta_1<\theta_2\leq \pi/2$. Recall that for $x\in W_{\theta_1,\theta_2}$ we denote by $\mathbf{P}_{\theta_1,\theta_2}^x$ the law of a simple random walk $(S_n)_{n\geq 0}$ in $W_{\theta_1,\theta_2}$ staring from $x$, and for $A\subset W_{\theta_1,\theta_2}$ and $y\in\BZ^2$, define $\CH_A(x,y)=\mathbf{P}_{\theta_1,\theta_2}^x(S_{\tau_{A}}=y)$.

\begin{theorem}\label{theorem:harm}
	For every $A\subset W_{\theta_1,\theta_2}$ and $y\in\BZ^2$, the following limit, called the harmonic measure of $A$ from infinity exists
	\[
		\CH_A^\infty(y):=\lim_{\|x\|_2\to \infty} \CH_A(x,y).
	\]
\end{theorem}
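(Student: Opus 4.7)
The plan is to reduce existence of $\CH_A^\infty(y)$ to convergence of a hitting distribution on a fixed finite ``shell'' in the wedge, and then to establish that convergence by a multi-scale coupling argument. I focus on the case of a finite set $A$, which is the one relevant for DLA. Fix such an $A$ and choose $R$ large enough that $A \subset W^R_{\theta_1,\theta_2}$. For any $x$ with $\|x\|_2 > R$, applying the strong Markov property at $\tau_R := \tau^+_{\partial W^R_{\theta_1,\theta_2}}$ yields
\[
\CH_A(x,y) \;=\; \sum_{z \in \partial W^R_{\theta_1,\theta_2}} \mu^R_x(z)\, \CH_A(z,y), \qquad \mu^R_x(z) := \mathbf{P}^x_{\theta_1,\theta_2}(S_{\tau_R}=z).
\]
The sum is finite and $\CH_A(z,y)$ is independent of $x$, so it suffices to prove that $\mu^R_x$ converges in total variation as $\|x\|_2 \to \infty$. (Recurrence of the simple random walk on $W_{\theta_1,\theta_2}$, which is essentially two-dimensional, ensures that every $\CH_A(z,y)$ is a well-defined probability distribution.)

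To establish convergence of $\mu^R_x$, I would couple two walks $(S_n)$ and $(S'_n)$ started from $x$ and $x'$ using the dyadic scales $L_k := 2^k R$. At scale $L_k$ I try to couple so that both walks hit $\partial W^{L_k}_{\theta_1,\theta_2}$ at the same vertex; once they do, I run them jointly thereafter, and they produce identical exit points on $\partial W^R_{\theta_1,\theta_2}$. The quantitative input is a scale-invariant Harnack-type ratio bound: for every $z \in \partial W^{L_k}_{\theta_1,\theta_2}$ and every $x, x' \in \partial W^{L_{k+1}}_{\theta_1,\theta_2}$,
\[
c_1 \;\leq\; \frac{\mu^{L_k}_x(z)}{\mu^{L_k}_{x'}(z)} \;\leq\; c_2,
\]
with constants depending only on the angle $\theta_2-\theta_1$. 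Such a bound guarantees that $\sum_z \min(\mu^{L_k}_x(z), \mu^{L_k}_{x'}(z)) \geq p$ for a uniform constant $p>0$, so the two walks can be coupled to agree on $\partial W^{L_k}_{\theta_1,\theta_2}$ with probability at least $p$. Iterating across $N := \lfloor \log_2((\|x\|_2\wedge\|x'\|_2)/R) \rfloor$ scales gives $\|\mu^R_x - \mu^R_{x'}\|_{TV} \leq (1-p)^N \to 0$, so $(\mu^R_x)$ is Cauchy and converges in the finite-dimensional simplex of probability measures on $\partial W^R_{\theta_1,\theta_2}$.

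The hardest step is establishing the Harnack ratio bound uniformly in $k$. The function $x \mapsto \mu^{L_k}_x(z)$ is discrete harmonic on the annular region $W_{\theta_1,\theta_2} \cap (B_{L_{k+1}+1} \setminus B_{L_k})$ with reflecting boundary conditions on $\partial W_{\theta_1,\theta_2}$. In the interior of the wedge a standard chaining argument gives uniform Harnack; the delicate point is chaining near the two bounding rays, where the reflecting boundary distorts the local geometry. I would handle this by unfolding the wedge across its bounding rays, so that the reflected walk becomes a simple random walk on a subset of $\BZ^2$ (exactly when $\theta_2-\theta_1$ is a rational multiple of $\pi$, and up to a bounded local modification in the irrational case), for which the classical discrete Harnack inequality applies. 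Scale invariance of this construction makes the constants $c_1, c_2, p$ independent of $k$, closing the argument.
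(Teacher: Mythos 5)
Your reduction (write $\CH_A(x,y)=\sum_{z\in\partial W^R_{\theta_1,\theta_2}}\mu^R_x(z)\,\CH_A(z,y)$ and prove that the hitting distribution $\mu^R_x$ on the fixed finite shell converges in total variation) is sound, and the multi-scale coupling scheme across dyadic annuli is a standard and correct way to deduce convergence \emph{once} the uniform overlap bound $\sum_z\min(\mu^{L_k}_x(z),\mu^{L_k}_{x'}(z))\ge p$ is in hand. The gap is that this overlap bound is exactly the hard content of the theorem, and your proposed proof of it does not work. The bound requires a pointwise, lattice-scale comparison of hitting probabilities of individual vertices $z\in\partial W^{L_k}_{\theta_1,\theta_2}$, including vertices near the corners where the exit arc meets the two reflecting rays. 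Your plan is to get this by a Harnack chain in the interior plus ``unfolding'' the wedge across its bounding rays. But reflecting $\BZ^2$ across a ray of slope $\tan\theta_i$ maps the lattice to itself only for a handful of special angles (multiples of $\pi/2$ and slope $\pm1$); for a generic $\theta_i$ the discrete wedge boundary is a quasi-periodic staircase, the reflected copy of the lattice does not align with $\BZ^2$ even approximately at lattice scale, and ``up to a bounded local modification'' is not a meaningful description of the discrepancy. (Note also that what matters is the slope of each individual ray, not whether $\theta_2-\theta_1$ is a rational multiple of $\pi$.) This is precisely the obstruction the paper flags: there is no sufficiently precise discrete Green function or boundary Harnack estimate for the reflected walk in a wedge, which is why the Kesten-style argument from $\BZ^2$ cannot simply be transplanted.

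For comparison, the paper sidesteps lattice-scale boundary estimates entirely. It couples two \emph{continuous-time} walks started on $\partial W^R_{\theta_1,\theta_2}$ under the maximum coupling and shows (i) the hitting time of $A$ is with high probability much larger than $T_0R^2$ (via the invariance principle to reflected Brownian motion and a conformal-map computation in the continuum wedge), and (ii) the coupling time is at most $T_0R^2$ with probability tending to $1$, by bounding the total variation distance of the two walks at time $T_0R^2$ using an off-the-shelf H\"older continuity estimate for the heat kernel of reflected random walk on a bounded Lipschitz domain. That heat-kernel input is the rigorous substitute for the uniform Harnack/overlap bound you are assuming; if you want to salvage your route, you would need to derive your ratio bound from such parabolic estimates (or from the Burdzy--Chen invariance principle plus a compactness argument), not from lattice reflection.
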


Unlike the analouge problem in the whole plain $\BZ^2$, c.f. \cite[Proposition 6.6.1]{lawler2010random}, the  existence of the limiting harmonic measure in a wedge is highly nontrivial and delicate. The main issue here is that, as of right now, there seems to be no discrete Green function approximations on the wedge (or part of it) that is precise enough to match our needs in applying that same approach. A detailed proof for the existence of the limit can be found in Appendix \ref{section: harmonic}. 

Combining Theorem \ref{thm:Beu_estimate} and Theorem \ref{theorem:harm}  we obtain

\begin{corollary}\label{cor:beurlingharm}
Fix $-\pi/2\le \theta_1<\theta_2\leq \pi/2$. For every $\ep>0$, there exists $M\in\BN$ and $C\in (0,\infty)$ such that for all $r,L\in\BN$ satisfying $r\geq M$ and $L/r\geq M$ and every connected subset $A\subset W_{\theta_1,\theta_2}$, such that $W^r_{\theta_1,\theta_2}\subset A$ and $A\cap \partial W_{\theta_1,\theta_2}^L \neq \emptyset$.
	\begin{equation}
	\CH_{\partial A}^\infty(\partial W^r_{\theta_1,\theta_2}) \le C\left(\frac{r}{L} \right)^{\frac{\pi}{2(\theta_2-\theta_1)}-\ep}r\log L.
	\end{equation}
\end{corollary}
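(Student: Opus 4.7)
The plan is to deduce the corollary directly from Theorem \ref{thm:Beu_estimate} by pushing the Beurling bound through a set-theoretic inclusion at finite starting distance $R$ and then passing to the limit $R\to\infty$, invoking Theorem \ref{theorem:harm} to identify the limiting probability as the harmonic measure from infinity.

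First I would fix a starting point $x\in\partial W^R_{\theta_1,\theta_2}$, chosen with $R$ large enough that $x\notin \partial A\cup\partial W^r_{\theta_1,\theta_2}$, and observe the inclusion of events
\[
\{S_{\tau_{\partial A}}\in \partial W^r_{\theta_1,\theta_2}\}\ \subseteq\ \{\tau^+_{\partial W^r_{\theta_1,\theta_2}}\le \tau^+_{\partial A}\}.
\]
Indeed, on the left event the walk's first visit to $\partial A$ already lies inside $\partial W^r_{\theta_1,\theta_2}$, so the walk necessarily entered $\partial W^r_{\theta_1,\theta_2}$ no later than $\partial A$; because $x$ sits in neither set, the first-return times $\tau^+$ agree with the corresponding hitting times and the inclusion follows. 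The hypothesis $W^r_{\theta_1,\theta_2}\subset A$ enters implicitly here: every vertex of $\partial W^r_{\theta_1,\theta_2}$ is adjacent to $W^r_{\theta_1,\theta_2}\subset A$, so the points of $\partial W^r_{\theta_1,\theta_2}$ that fall outside $A$ automatically lie in $\partial A$, making the intersection $\partial A\cap\partial W^r_{\theta_1,\theta_2}$ the natural support of the harmonic mass under consideration.

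Second, I would apply Theorem \ref{thm:Beu_estimate} to the right-hand event (whose hypotheses on $A$, $r$ and $L$ are all inherited from the corollary) to obtain, uniformly in $x\in\partial W^R_{\theta_1,\theta_2}$ and for every $R$ large enough depending on $\epsilon$ and $L$,
\[
\sum_{y\in\partial W^r_{\theta_1,\theta_2}}\CH_{\partial A}(x,y)=\mathbf{P}^x_{\theta_1,\theta_2}\bigl(S_{\tau_{\partial A}}\in \partial W^r_{\theta_1,\theta_2}\bigr)\le C\left(\frac{r}{L}\right)^{\frac{\pi}{2(\theta_2-\theta_1)}-\epsilon}r\log L.
\]
Third, I would let $R\to\infty$. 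Since $\partial W^r_{\theta_1,\theta_2}$ is a finite set, the finite sum on the left exchanges trivially with the limit $\|x\|_2\to\infty$, and by Theorem \ref{theorem:harm} each pointwise limit $\CH_{\partial A}(x,y)\to \CH^\infty_{\partial A}(y)$ exists. This yields the desired bound on $\CH^\infty_{\partial A}(\partial W^r_{\theta_1,\theta_2})=\sum_y \CH^\infty_{\partial A}(y)$.

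I do not expect any substantive obstacle, as the bound furnished by Theorem \ref{thm:Beu_estimate} is $x$-independent and hence survives the passage to the limit. The only bookkeeping to do is to verify that $x$ can indeed be taken outside $\partial A\cup\partial W^r_{\theta_1,\theta_2}$ for all sufficiently large $R$, which is routine.
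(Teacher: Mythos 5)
Your argument is correct and is exactly what the paper intends (the paper states the corollary as an immediate combination of Theorem \ref{thm:Beu_estimate} and Theorem \ref{theorem:harm} without writing out the details): the event inclusion $\{S_{\tau_{\partial A}}\in\partial W^r_{\theta_1,\theta_2}\}\subseteq\{\tau^+_{\partial W^r_{\theta_1,\theta_2}}\le\tau^+_{\partial A}\}$ holds, the Beurling bound is uniform in $x\in\partial W^R_{\theta_1,\theta_2}$ for $R$ large, and the finite sum over $\partial W^r_{\theta_1,\theta_2}$ passes to the limit $\|x\|_2\to\infty$ via Theorem \ref{theorem:harm}. No gap.
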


Using the existence of the limit $\CH_{\partial A}^\infty(y)$, we can now formally define the DLA in the wedge, denoted $(A_n)_{n\geq 0}$, to be a sequence of random subsets of $W_{\theta_1,\theta_2}$ such that 
\begin{equation}
	A_0 =\{(0,0)\} \qquad \text{and}\qquad 
	A_{n+1} = A_n \uplus \{a_{n+1}\}\,,
\end{equation}
where, given $A_n$, the vertex $a_{n+1}\in W_{\theta_1,\theta_2}$ is sampled according to $\CH_{\partial A_n}^\infty(\cdot)$. Note that $(A_n)_{n\geq 0}$ is an increasing family of subsets in $W_{\theta_1,\theta_2}$, and we denote its limit by $A_\infty = \bigcup_{n=0}^\infty A_n$.

%**********************************************************************************************************************************************************
%**********************************************************************************************************************************************************
\
\section{Proof of Theorem \ref{thm:main}}

Throughout the remainder of the paper we fix $-\pi/2\leq \theta_1<\theta_2\leq \pi/2$. In this section we assume Corollary \ref{cor:beurlingharm} and turn to prove Theorem \ref{thm:main}. Let $M>0$ and $R\in\BN$ such that $R>M$. Denote by $(L_n)_{n\geq 0}$ a sequence such that $L_0\geq  R$ and $L_{n+1}/L_n\geq M$ for every $n\geq 1$, and by $\si_i$ the sequence of successive first exit times of $(A_n)_{n\geq 0}$ from $B_{L_i}$, namely,
\[
	\si_i=\inf\{n\geq 0: A_n\not\subset B_{L_i}\}.
\]

\begin{lemma}\label{cor:backprob} 
	Let $\varepsilon>0$. For all $M$ sufficiently large (depending only on $\varepsilon$) there exists a constant $C=C(M,\varepsilon)\in (0,\infty)$ such that for all $R>0$ sufficiently large 
\[
	\ev[|(A_\infty\setminus A_{\si_1}) \cap B_R|]
	%=\sum_{i=1}^\infty\ev\big[\big|(A_{\tau_{i+1}}\setminus A_{\tau_{i}})\cap  B_R\big|\big]
	\le C\sum_{i=1}^\infty L_{i+1}^2\bigg(\frac{R}{L_i}\bigg)^{\frac{\pi}{2(\theta_2-\theta_1)}-\epsilon}R\log L_i.
\]
\end{lemma}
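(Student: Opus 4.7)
The strategy is to decompose $A_\infty\setminus A_{\sigma_1}$ according to the shell $B_{L_{i+1}}\setminus B_{L_i}$ in which the aggregate lives when each new particle is added, and to bound, shell by shell, the probability that such a particle attaches inside $B_R$. Concretely, writing the telescoping disjoint union
\[
(A_\infty \setminus A_{\sigma_1}) \cap B_R = \bigsqcup_{i=1}^\infty (A_{\sigma_{i+1}} \setminus A_{\sigma_i}) \cap B_R
\]
and applying linearity of expectation reduces the problem to bounding each term separately. For fixed $i\ge 1$, at most $|W^{L_{i+1}}_{\theta_1,\theta_2}|\le CL_{i+1}^2$ particles can be added during $[\sigma_i,\sigma_{i+1})$, since $A_{\sigma_{i+1}-1}\subset B_{L_{i+1}}$; hence it suffices to bound, uniformly over $n\in[\sigma_i,\sigma_{i+1})$, the conditional probability $\BP(a_{n+1}\in B_R\mid A_n) = \CH^\infty_{\partial A_n}(B_R\cap\partial A_n)$.

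The plan is to apply Corollary~\ref{cor:beurlingharm} to the augmented set $A_n^*:=A_n\cup W^R_{\theta_1,\theta_2}$. Both $A_n$ and $W^R_{\theta_1,\theta_2}$ are connected and share the origin, so $A_n^*$ is connected; it contains $W^R_{\theta_1,\theta_2}$ by construction; and since $A_n$ is a connected set containing the origin and a vertex outside $B_{L_i}$, any lattice path in $A_n$ joining the two meets $\partial W^{L_i}_{\theta_1,\theta_2}$, giving $A_n^*\cap\partial W^{L_i}_{\theta_1,\theta_2}\neq\emptyset$. With $M$ taken large and $R\ge M$, the hypotheses of Corollary~\ref{cor:beurlingharm} hold for $r=R$ and $L=L_i$ (since $L_i/R\ge M^i\ge M$), yielding
\[
\CH^\infty_{\partial A_n^*}\!\left(\partial W^R_{\theta_1,\theta_2}\right) \le C\left(\frac{R}{L_i}\right)^{\frac{\pi}{2(\theta_2-\theta_1)}-\epsilon} R\log L_i.
\]

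The only delicate step — and the place I expect to need the most care — is the pointwise comparison $\CH^\infty_{\partial A_n}(B_R\cap\partial A_n) \le \CH^\infty_{\partial A_n^*}(\partial W^R_{\theta_1,\theta_2})$. I would establish it by tracking a simple random walk $(S_k)_{k\ge 0}$ started at some $x$ with $\|x\|_2$ large and setting $\tau:=\tau^+_{\partial A_n}$. If $S_\tau\in B_R$, then the walk must enter $W^R_{\theta_1,\theta_2}$ at some first step $s$, so $S_{s-1}\in\partial W^R_{\theta_1,\theta_2}$, and $\|S_k\|_2\ge R$ for every $k<s$. It follows that $S_{s-1}\notin A_n$: otherwise the walk would have entered $A_n$ at some earlier step $t\le s-1$ from $S_{t-1}\in\partial A_n$, forcing $\tau\le t-1<s$ and so $S_\tau\notin B_R$. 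Hence $S_{s-1}\in\partial W^R_{\theta_1,\theta_2}\cap\partial A_n^*$. Any hit of $\partial A_n^*$ strictly before time $s-1$ occurs outside $W^R_{\theta_1,\theta_2}$, and so lies either in $\partial W^R_{\theta_1,\theta_2}$ (in which case the first hit of $\partial A_n^*$ still lies in $\partial W^R_{\theta_1,\theta_2}$) or in $\partial A_n\setminus W^R_{\theta_1,\theta_2}$, which again contradicts $S_\tau\in B_R$. Passing to the limit $\|x\|_2\to\infty$ yields the desired comparison; combining it with the a.s.\ bound $\sigma_{i+1}-\sigma_i\le CL_{i+1}^2$ and summing over $i\ge 1$ completes the proof.
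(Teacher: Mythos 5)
Your proposal is correct and follows essentially the same route as the paper: decompose $(A_\infty\setminus A_{\sigma_1})\cap B_R$ over the shells $[\sigma_i,\sigma_{i+1})$, bound the number of particles in each shell by $|W^{L_{i+1}}_{\theta_1,\theta_2}|\le \pi L_{i+1}^2$, and apply Corollary~\ref{cor:beurlingharm} to the augmented connected set $A_n\cup W^R_{\theta_1,\theta_2}$ with $r=R$, $L=L_i$. The only difference is that you spell out the comparison $\CH^\infty_{\partial A_n}(B_R)\le \CH^\infty_{\partial A_n^*}(\partial W^R_{\theta_1,\theta_2})$ in full, whereas the paper dispatches it in one sentence; your path-tracking argument for that step is sound.
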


\begin{proof}
	We rewrite $\ev[|(A_\infty\setminus A_{\tau_1}) \cap B_R|]$ as 
\[
	\ev[|(A_\infty\setminus A_{\si_1}) \cap B_R|]
	=\sum_{i=1}^\infty\ev\big[\big|(A_{\si_{i+1}}\setminus A_{\si_{i}})\cap  B_R\big|\big]\,,
\]
and turn to bound each of the terms on the right hand side. 

	Fix $i\geq 1$. Note that $|W_{\theta_1,\theta_2}^{L_{i+1}}|\leq |B_{L_{i+1}}|\leq \pi L_{i+1}^2$ and therefore $\si_{i+1}\leq \pi L_{i+1}^2$. For every $\sigma_i\leq j\leq \sigma_{i+1}$, denote $B_j=A_j\cup W^R_{\theta_1,\theta_2}$ and note that the random set $B_j$ is a connected set containing $W^R_{\theta_1,\theta_2}$ such that $B_j\cap \partial W^{L_i}_{\theta_1,\theta_2}\neq \emptyset$. Therefore, by Corollary \ref{cor:beurlingharm}, for every $\si_i\leq j<\si_{i+1}$,
\[
	\CH_{ \partial A_j}^\infty(W^R_{\theta_1,\theta_2})\leq \CH_{ \partial B_j}^\infty(\partial W^R_{\theta_1,\theta_2}) \le C\left(\frac{R}{L_i} \right)^{\frac{\pi}{2(\theta_2-\theta_1)}-\ep}R\log L_i\,,
\]	
where for the first inequality we used the fact that a particle starting from a sufficiently far point must hit $\partial W^R_{\theta_1,\theta_2}$ before hitting $B_R$. 

Consequently, the random variable $(A_{\tau_{i+1}}\setminus A_{\tau_{i}})\cap  B_R$ is stochastically dominated by a sum of $\pi L_{i+1}^2$ independent Bernoulli random variables with success probability $C(R/L_i)^{\frac{\pi}{2(\theta_2-\theta_1)}-\epsilon}R \log L_i$, whose expectation is $CL_{i+1}^2 (R/L_i)^{\frac{\pi}{2(\theta_2-\theta_1)}-\epsilon}R\log L_i$. 
\end{proof}

Fix some $b>1$ and choose the sequence $L_i:=M^i R^b$. By Lemma \ref{cor:backprob}, for every $\varepsilon>0$ and $M,R>0$ sufficiently large 
\begin{align}
	&\ev[|(A_\infty\setminus A_{\tau_1}) \cap B_R|]\nonumber\\
	&\qquad \le C\sum_{i=1}^\infty L_{i+1}^2\bigg(\frac{R}{L_i}\bigg)^{\frac{\pi}{2(\theta_2-\theta_1)}-\ep}R\log(L_i)\\
	&\qquad =CR^{2b+1-(b-1)\big(\frac{\pi}{2(\theta_2-\theta_1)}-\ep\big)}\sum_{i=1}^\infty M^2(i\log M +b\log R)M^{\big(2+\ep-\frac{\pi}{2(\theta_2-\theta_1)}\big)i}.\nonumber
\end{align}

Since $\ep>0$ can be chosen to be arbitrary small, if $\theta_2-\theta_1<\pi/4$, then the sum on the right hand side is finite and equals 
\[
		\ev[|(A_\infty\setminus A_{\tau_1}) \cap B_R|]\leq CR^{2b+1-(b-1)\big(\frac{\pi}{2(\theta_2-\theta_1)}-\ep\big)}(\log M +b\log R)M^{\big(4+\ep-\frac{\pi}{2(\theta_2-\theta_1)}\big)}\,.
\]
Furthermore, for every fixed $b>1$, the expectation goes to zero as $R\rightarrow\infty$ as soon as 
\begin{equation}\label{eq:difference_condition_1}
	\theta_2-\theta_1<\frac{\pi(b-1)}{4b+2}.
\end{equation}
As $b\to \infty$, this assumption coincides with the previous one, namely $\theta_2-\theta_1<\pi/4$. In fact, for $\theta_1,\theta_2$ such that $\theta_2-\theta_1<\pi/4$, the power of $R$ on the right hand side can be made arbitrarily small by increasing $b$, and in particular, if  
\begin{equation}\label{eq:difference_condition_2}
	\theta_2-\theta_1<\frac{\pi(b-1)}{4(b+1)}
\end{equation}
the power is strictly smaller than $-1$. Assuming \eqref{eq:difference_condition_2} it follows that from the Markov inequality that
\[
	\sum_{R=1}^{\infty} \mathbf{P}_{\theta_1,\theta_2}(|A_\infty\setminus A_{\si_1(MR^b)}\cap B_R|\geq 1)<\infty\,, 
\]
and therefore, using the Borel Cantelli lemma, that $\BP_{\theta_1,\theta_2}$-almost surely, for all $R>0$ sufficiently large, there are no particles hitting $B_R$ after time $\si_1=\si_1(MR^b)$, namely, after the aggregate reaches distance $MR^b$. Taking $a>2b$ and noting that for sufficiently large $R$ we have $\si_1 \leq |B_{L_1}|\leq \pi L_1^2 =M^2R^{2b}<R^a$ the result follows.\hfill\qed

\section{Discrete Beurling estimate in a wedge}

The goal of this section is to provee Theorem \ref{thm:Beu_estimate}. We start by describing the proof strategy.
\begin{enumerate}[(i)]
\item Applying the strong Markov property, and the fact that a random walk starting from radius $R$ must hit radius $L$ before $r$, we conclude that it suffices to consider random walks starting from radius $L$.
\item Using time reversibility of the random walk, we rewrite the hitting probability as the ratio between the escape probability from radius $r$ to radius $L$ while avoiding $A$, and the probability starting from radius $L$ to hit $A$ before returning to the starting point.
\item We bound the probability to hit $A$ before returning to the starting point is bounded from below using the theory of electrical networks. 
\item Finally, we bound the escape probability from radius $r$ to radius $L$ from above with the help of the invariance principle and geometric observations on the set $A$ for which the escape probability is maximal.
\end{enumerate}

\subsection{Proof of Theorem \ref{thm:Beu_estimate} - Reversibility and key lemmas}

Recall that for a set $B\subset W_{\theta,\theta_2}$ we denote by $\tau_B^+=\inf\{n\geq 1~:~S_n\in B\}$ the first return time to $B$ and let $\tau_B=\inf\{n\geq 0~:~S_n\in B\}$ be the first hitting time of $B$. 

We start our proof by rewriting the hitting probabilities from far away appearing in \eqref{eq:Beu_est_1} as escaping probabilities. First, note that for all $y\in W^{r}_{\theta_1,\theta_2}$ and $x\in \partial W^R_{\theta_1,\theta_2}$ the hitting probability $\mathbf{P}_{\theta_1,\theta_2}^x(S_{\tau_{\partial A}}=y)$, can only increase if we replace  $A$ with $A\cap W^{L}_{\theta_1,\theta_2}$. Thus, without loss of generality, we can assume that $A\subset W^{L}_{\theta_1,\theta_2}$. For a set $B\subset W_{\theta_1,\theta_2}$, define $\overline{B}=B\cup\partial B$ to be its clousre and note that the assumption on $A$ implies $\overline{A}\subset \overline{W}^L_{\theta_1,\theta_2}$.

For any $R\gg L$ and $x\in \partial W^{R}_{\theta_1,\theta_2}$, observe that a random walk starting at $x$ must hit $\partial W^{L}_{\theta_1,\theta_2}$ before hitting $\partial A$. Thus, by the strong Markov property, for any $y\in W^{r}_{\theta_1,\theta_2}$
\begin{equation}
\label{strong Markov 1}
\begin{aligned}
	\mathbf{P}_{\theta_1,\theta_2}^x(\tau_{\partial A}=\tau_y)&=\sum_{u\in \partial W^{L}_{\theta_1,\theta_2}} \mathbf{P}_{\theta_1,\theta_2}^x \big(S_{\tau_{\partial W^L_{\theta_1,\theta_2}}}=u\big)\mathbf{P}_{\theta_1,\theta_2}^u(\tau_{\partial A}=\tau_y)\\
&\le \sup_{u\in \partial W^{L}_{\theta_1,\theta_2}\setminus \overline{ A}}\mathbf{P}_{\theta_1,\theta_2}^u(\tau_{\partial A}=\tau_y). 
\end{aligned}
\end{equation}
For any $u\in \partial W^{L}_{\theta_1,\theta_2}\setminus \overline{ A}$, we now rewrite the hitting probability from $u$ to $y$ as the escaping probabiity from $y$  to $u$.  This is done using the reversibility property of simple random walk on graphs. The method of replacing the hitting probability with the escaping probability was used in H. Kesten \cite{MR915132} work on the DLA (see also  \cite{procaccia2017harmonic} for the case of a domain with a boundary). 

First, note that for each $u\in \partial W^{L}_{\theta_1,\theta_2}\setminus \overline{A}$ and $y\in \partial W^{r}_{\theta_1,\theta_2}$ , 
\begin{equation}\label{inverse_time_0}
\begin{aligned}
	\mathbf{P}^u_{\theta_1, \theta_2}(\tau_{\partial A}=\tau_{y})&=\sum_{n=1}^\infty \mathbf{P}^u_{\theta_1, \theta_2}(\tau_{\partial 	A}=\tau_y=n)\\
&=\sum_{n=1}^\infty \mathbf{P}^u_{\theta_1, \theta_2}(S_1\notin \overline{A},\cdots, S_{n-1}\notin \overline{A},  S_n=y). 
\end{aligned}
\end{equation}
Here we used the fact that $A$ is a connected set and that $u\notin \overline{A}$. As a result, the whole path must stay outside of the set $\overline{A}$ until it first hit $y$. 

Let $\Gamma^{A,n}_{u,y}$ be the collection of all paths $\gamma = (x_0,x_1,x_2,\cdots, x_n)$ of length $n$ in $W_{\theta_1, \theta_2}$ such that $x_0=u$, $x_n=y$ and $x_1,\cdots, x_{n-1}\notin \overline{A}$. For $\gamma=(x_0,\ldots,x_n)\in \Gamma^{A,n}(u,y)$  denote by $\widehat{\gamma} = (x_n,\ldots,x_0)\in \Gamma^{n,A}(y,u)$ the path in the reverse direction. Then, the reversibility of simple random walk implies
\[
	\mathbf{P}_{\theta_1,\theta_2}^u((S_0,\ldots, S_n)=\gamma)
	= \frac{\deg(y)}{\deg(u)}	\mathbf{P}_{\theta_1,\theta_2}^y((S_0,\ldots, S_n)=\widehat{\gamma})\qquad \forall \gamma\in \Gamma^{A,n}_{u,y}\,,
\]
where for $z\in W_{\theta_1,\theta_2}$ we denote by $\deg(z)$ its degree in the graph $W_{\theta_1,\theta_2}$. Since $\deg(z)\in \{1,2,3,4\}$ for every $z\in W_{\theta_1,\theta_2}$ we conclude that 
\begin{equation}\label{inverse_time_2}
	\mathbf{P}_{\theta_1,\theta_2}^x((S_0,\ldots, S_n)=\gamma)
	\leq 4 \mathbf{P}_{\theta_1,\theta_2}^y((S_0,\ldots, S_n)=\widehat{\gamma})\qquad \forall \gamma\in \Gamma^{A,n}_{u,y}\,.
\end{equation}

Combining \eqref{inverse_time_0} and \eqref{inverse_time_2}, 
\begin{equation}\label{inverse_time_3}
\begin{aligned}
	\mathbf{P}^u_{\theta_1, \theta_2}(\tau_{\partial A}=\tau_{y})&=\sum_{n=1}^\infty \tbP^u_{\theta_1, \theta_2}(S_1\notin \overline{A},\cdots, S_{n-1}\notin \overline{A},  S_n=y )\\
&\le 4\sum_{n=1}^\infty \tbP^y_{\theta_1, \theta_2}(S_1\notin \overline{A},\cdots, S_{n-1}\notin \overline{A},  S_n=u)\\
&=4 \tbE^y_{\theta_1, \theta_2}\big[|\{n\in [0,\tau_{\overline{A}}^+) ~:~ S_n=u\}|\big]. 
\end{aligned}
\end{equation}

By the strong Markov property for the stopping time $\tau_{\overline{A}}^+\wedge \tau^+_{u}$
\begin{equation}\label{inverse_time_3.5}
\begin{aligned}
&	\tbE^y_{\theta_1, \theta_2}\big[|\{n\in [0,\tau^+_{\overline{A}}) ~:~ S_n=u\}|\big] \\
	 =&\mathbf{P}^y_{\theta_1,\theta_2}(\tau^+_{u}< \tau^+_{\overline{A}})\cdot \mathbf{E}^y_{\theta_1, \theta_2}\big[|\{n\in [0,\tau^+_{\overline{A}}) ~:~ S_n=u\}|\big]
=\frac{\mathbf{P}^y_{\theta_1,\theta_2}(\tau^+_{u}<\tau^+_{\overline{A}})}{\mathbf{P}^u_{\theta_1, \theta_2}(\tau^+_{\overline{A}}\leq \tau^+_u)}\,,\\ 
\end{aligned}
\end{equation}
where in the last step we used the fact that the number of visits to $u$ before hitting $A$ when starting from $x$ is a geometric random variables with parameter $\mathbf{P}^u_{\theta_1, \theta_2}(\tau^+_{\overline{A}}<\tau^+_u)$. 

Combining \eqref{strong Markov 1}, \eqref{inverse_time_3} and \eqref{inverse_time_3.5} together with the fact that $|\partial W^r_{\theta_1,\theta_2}|\leq Cr$ for some universal constant $C\in (0,\infty)$ gives
\[
\begin{aligned}
\mathbf{P}^x_{\theta_1, \theta_2}\big(\tau_{ \partial W^r_{\theta_1, \theta_2}}=\tau_{\partial A}\big)&=\sum_{y\in \partial W^r_{\theta_1,\theta_2}}\mathbf{P}^x_{\theta_1, \theta_2}(\tau_{y}=\tau_{\partial A})\\
&\le \sum_{y\in \partial W^r_{\theta_1,\theta_2}} \sup_{u\in \partial W^{L}_{\theta_1,\theta_2}\setminus \overline{A}}\mathbf{P}_{\theta_1,\theta_2}^u(\tau_{\overline{A}}=\tau_y)\\
&\le Cr \sup_{y\in \partial W^r_{\theta_1,\theta_2}} \sup_{u\in \partial W^{L}_{\theta_1,\theta_2}\setminus \overline{A}}\mathbf{P}_{\theta_1,\theta_2}^u( \tau_{\overline{A}}=\tau_y)\\
&= Cr \sup_{y\in \partial W^r_{\theta_1,\theta_2}} \sup_{u\in \partial W^{L}_{\theta_1,\theta_2}\setminus \overline{A}}\frac{\mathbf{P}^y_{\theta_1,\theta_2}(\tau^+_{u}<\tau^+_{\overline{A}})}{\mathbf{P}^u_{\theta_1, \theta_2}(\tau^+_{\overline{A}}\leq \tau^+_u)}. 
\end{aligned}
\]

Consequently, in order to prove Theorem \ref{thm:Beu_estimate}, it suffices to show that there is a constant $C<\infty$ such that for every $\ep>0$, $y\in  \partial W^r_{\theta_1,\theta_2}$ and every $u\in \partial W^{L}_{\theta_1,\theta_2}\setminus \overline{A}$,
\begin{equation}\label{what we need}
\frac{\mathbf{P}^y_{\theta_1,\theta_2}(\tau^+_{u}<\tau^+_{\overline{A}})}{\mathbf{P}^u_{\theta_1, \theta_2}(\tau^+_{\overline{A}}\leq \tau^+_u)}\le C\left(\frac{r}{L} \right)^{\frac{\pi}{2(\theta_2-\theta_1)}-\ep} \log L\,,
\end{equation}
for sufficiently large $L$. The last inequality is an immediate corollary of the following two lemmas. \qed

\begin{lemma}\label{lemma_upper_bound_return}
	There exists a constant $c\in (0,\infty)$ independent of $r$ and $L$ such that 
$$
	\mathbf{P}^u_{\theta_1, \theta_2}(\tau^+_{\overline{A}}\leq \tau^+_u) \geq \frac{c}{\log L}	
%	\mathbf{E}^x_{\theta_1, \theta_2}\big[\text{\# of visits to \partial W^R_{\theta_1, \theta_2} in }[0,\tau_{A}]\big]\le CR\log R,
$$
uniformly for all $A\subset W^L_{\theta_1,\theta_2}$ as in Theorem \ref{thm:Beu_estimate} and all $u\in \partial W^L_{\theta_1,\theta_2}\setminus \overline{A}$. 
\end{lemma}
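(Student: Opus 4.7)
Plan. The plan is to translate the probability into an effective resistance, reduce from the set $\overline{A}$ to a single boundary vertex using the connectedness hypothesis on $A$, and then bound the resulting two-point resistance via Thompson's principle applied to an explicit dyadic unit flow.

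First, since $u\notin \overline{A}$, the event $\{\tau^+_{\overline{A}}\leq \tau^+_u\}$ coincides with $\{\tau_{\overline{A}}<\tau^+_u\}$, and the standard identity for SRW on the unit-resistance network associated to $W_{\theta_1,\theta_2}$ gives
\[
\mathbf{P}^u_{\theta_1,\theta_2}(\tau^+_{\overline{A}}\leq \tau^+_u) \;=\; \frac{1}{\deg(u)\,R^W_{\mathrm{eff}}(u,\overline{A})}.
\]
Since $\deg(u)\leq 4$, it is enough to show $R^W_{\mathrm{eff}}(u,\overline{A})\leq C\log L$.

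Second, I would reduce from $\overline{A}$ to a single vertex on the shell $\partial W^L_{\theta_1,\theta_2}$. Under the WLOG reduction $A\subset W^L_{\theta_1,\theta_2}$ performed just above the lemma, the original hypothesis $A\cap \partial W^L_{\theta_1,\theta_2}\neq\emptyset$ forces $A$ to contain a vertex adjacent to the outer shell, so $\overline{A}=A\cup\partial A$ contains a vertex $v\in \partial W^L_{\theta_1,\theta_2}$. By Rayleigh's monotonicity principle, shrinking the grounded set $\overline{A}$ down to $\{v\}$ can only increase resistance, so
\[
R^W_{\mathrm{eff}}(u,\overline{A}) \;\leq\; R^W_{\mathrm{eff}}(u,v).
\]
The problem therefore reduces to a two-point estimate $R^W_{\mathrm{eff}}(u,v)\leq C\log L$ for any pair $u,v\in\partial W^L_{\theta_1,\theta_2}$, with $C$ allowed to depend on $\theta_2-\theta_1$ but not on $u,v,L,A$.

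Third, I would establish the two-point bound via Thompson's principle by exhibiting a unit flow $\theta$ from $u$ to $v$ supported in $W_{\theta_1,\theta_2}$ with Dirichlet energy $\mathcal{E}(\theta)=O(\log L)$. The construction is the classical two-dimensional dyadic spread: decompose the region containing the segment from $u$ to $v$ into annular shells $R_k=\{w\in W_{\theta_1,\theta_2}:2^k\leq\|w-u\|<2^{k+1}\}$ for $0\leq k\leq\lfloor\log_2(2L)\rfloor$, push the unit current through $R_k$ essentially uniformly over the $\Theta(2^{2k})$ lattice edges therein so that the per-edge current is $\Theta(2^{-k})$, and then mirror the construction to collect the flow into $v$. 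The energy contribution of each shell is $\Theta(2^{2k}\cdot 2^{-2k})=\Theta(1)$, and summing over the $O(\log L)$ shells yields total energy $O(\log L)$. The positive opening angle assumption $\theta_2-\theta_1>0$ is used precisely to guarantee that a definite fraction of each planar annulus $R_k$ lies inside $W_{\theta_1,\theta_2}$, so that the edge counts are genuinely $\Theta(2^{2k})$ and uniform spreading is meaningful.

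The main obstacle I anticipate is making this flow rigorously divergence-free on every internal vertex and controlling its behavior near the two straight boundary rays of $W_{\theta_1,\theta_2}$, especially when $u$ or $v$ lies close to such a ray. A clean resolution is to follow item (iv) of the outline and build the flow by discretizing the continuum harmonic flow in the wedge corresponding to a source at $u$ and a sink at $v$, absorbing the discretization error via the invariance principle. An alternative that avoids continuum tools is to define $\theta$ as the superposition of unit excursions of an appropriate auxiliary walk from $u$ to $v$, so that divergence-freeness is automatic and the energy reduces to a trace of a Green function that is of logarithmic order in $L$ by well-known two-dimensional estimates.
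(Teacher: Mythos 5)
Your overall strategy coincides with the paper's: convert the return probability into an escape probability $\mathbf{P}^u_{\theta_1,\theta_2}(\tau^+_{\overline A}\leq\tau^+_u)=1/(\deg(u)\,\mathcal R_{\mathrm{eff}}(u,\overline A))$, shrink the grounded set by Rayleigh monotonicity, and prove a logarithmic upper bound on a point-to-set effective resistance in the wedge. The difference is the choice of grounding vertex. The paper uses the standing assumption $W^r_{\theta_1,\theta_2}\subset A$ (in force throughout this section, cf.\ Corollary \ref{cor:beurlingharm} and the proof of Lemma \ref{lemma push to boundary}) to ground at the origin, and then quotes the wedge analogue of Proposition 2.15 of \cite{LP17}, namely $\mathcal R(0\to u)\leq C\log\|u\|_2$; this is geometrically cleaner because the apex-to-point flow in a cone is self-similar and never has to negotiate the two boundary rays at an $L$-dependent scale. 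You instead ground at a vertex $v\in\overline A\cap\partial W^L_{\theta_1,\theta_2}$, which has the merit of using only the hypotheses literally stated in Theorem \ref{thm:Beu_estimate}, but it forces you to bound $\mathcal R_{\mathrm{eff}}(u,v)$ for two points on the outer shell.

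That last step is where your argument is not yet complete. The assertion that a definite fraction of each planar annulus $R_k$ lies inside $W_{\theta_1,\theta_2}$ is delicate: the relevant scales go up to $\|u-v\|$, which can be of order $\varphi L$ with $\varphi=\theta_2-\theta_1$, and the lens-shaped support of the standard dyadic flow around the segment from $u$ to $v$ can leave the wedge when both endpoints are close to one boundary ray --- exactly the obstacle you flag without resolving. Of your two fallbacks, the first (discretizing the continuum harmonic flow via the invariance principle) is not quantitative enough to give a bound uniform in $u,v,L$ without substantial extra work, and the second is essentially circular here: the paper stresses that sufficiently precise discrete Green function estimates in the wedge are precisely what is \emph{not} available. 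The cleanest repairs are either (a) to invoke that effective resistance is a metric, so that $\mathcal R_{\mathrm{eff}}(u,v)\leq\mathcal R_{\mathrm{eff}}(u,0)+\mathcal R_{\mathrm{eff}}(0,v)\leq C\log L$, reducing to the apex-to-point bound the paper uses, or (b) to observe that $0\in A$ under the standing assumption and ground there directly, bypassing the shell-to-shell estimate altogether.
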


\begin{lemma}\label{lemma_escape_2}
	For every $\varepsilon>0$ there exists a constant $C\in (0,\infty)$ such that for every $r,L$ sufficiently large, every $y\in \partial W_{\theta_1,\theta_2} ^r$, every $A\subset W^L_{\theta_1,\theta_2}$ as in Theorem \ref{thm:Beu_estimate} and every $u\in \partial W_{\theta_1,\theta_2} ^L\setminus \overline{A}$
$$
	\mathbf{P}^y_{\theta_1,\theta_2}(\tau^+_{u}<\tau^+_{\overline{A}})\le \mathbf{P}^y_{\theta_1, \theta_2}(\tau^+_{\partial W^L_{\theta_1,\theta_2}}<\tau^+_{\overline{A}})\leq C\left(\frac{r}{L}\right)^{\frac{\pi}{\theta_2-\theta_1}-\varepsilon}\,.
$$
\end{lemma}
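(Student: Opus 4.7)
The first inequality is immediate, since $u\in\partial W^L_{\theta_1,\theta_2}$ gives the inclusion $\{\tau^+_u<\tau^+_{\overline{A}}\}\subseteq\{\tau^+_{\partial W^L_{\theta_1,\theta_2}}<\tau^+_{\overline{A}}\}$. For the main bound I will combine the invariance principle with the conformal invariance of planar Brownian motion, reducing the estimate to a Poisson-kernel computation in the upper half-plane. First, I couple $(S_n)$ on $W_{\theta_1,\theta_2}$ with a reflected planar Brownian motion $(B_t)$ in the continuum wedge via a Skorokhod-type embedding. Over the time scales corresponding to displacements between radii $r$ and $L$ the coupling error is at most polylogarithmic in $L$, which will be absorbed into the $\varepsilon$ of the target exponent, so it suffices to prove the continuum analogue.

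Next, I carry out a Beurling-style symmetrisation/projection argument on the wedge annulus $W^L_{\theta_1,\theta_2}\setminus W^r_{\theta_1,\theta_2}$. Using the connectedness of $A$ together with the hypothesis $A\cap\partial W^L_{\theta_1,\theta_2}\ne\emptyset$, I identify the extremal configuration as a radial slit running from a point at distance $\asymp r$ out to $\partial W^L_{\theta_1,\theta_2}$. For this extremal $A$, $\overline{A}$ together with the two wedge rays forms an effective Dirichlet boundary for the walk in $W^L_{\theta_1,\theta_2}\setminus\overline{A}$; for any other admissible $A$ the escape probability is no larger, up to constants.

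I then apply the conformal map $f(z)=z^{\pi/(\theta_2-\theta_1)}$, after translating the angular variable so that $\theta_1=0$. This sends the continuum wedge to the upper half-plane $\mathbb{H}$; the starting point $y$ with $|y|=r$ is sent to $w_0$ with $|w_0|\asymp r^{\pi/(\theta_2-\theta_1)}$ and $0\le\mathrm{Im}(w_0)\le|w_0|$; the outer arc $\partial W^L_{\theta_1,\theta_2}$ is sent near the semicircle of radius $R:=L^{\pi/(\theta_2-\theta_1)}$; and the wedge rays are sent to the real axis, which by the previous step may be treated as absorbing. By conformal invariance of planar Brownian motion (up to a time change), it remains to estimate the harmonic measure of the upper semicircle of radius $R$ from $w_0$ in the upper half-disk, with Dirichlet boundary values $1$ on the semicircle and $0$ on the diameter. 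A standard Fourier expansion in polar coordinates represents this harmonic function as $\phi(\rho,\theta)=\sum_{n\ge 1,\,n\text{ odd}}(4/(n\pi))(\rho/R)^n\sin(n\theta)$, whence for $|w_0|\ll R$ one reads off $\phi(w_0)\le C\,\mathrm{Im}(w_0)/R\le C|w_0|/R=C(r/L)^{\pi/(\theta_2-\theta_1)}$, as required.

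The main obstacle is the Beurling-type projection step in the wedge. Unlike the classical disk setting, the wedge annulus has mixed boundary behaviour --- the wedge rays are reflecting for the walk while $\overline{A}$ is absorbing --- so one cannot directly quote a standard result. The argument has to use both the connectedness of $\overline{A}$ and its contact with $\partial W^L_{\theta_1,\theta_2}$ in an essential way, in order to fuse $\overline{A}$ and the wedge rays into a single effective absorbing boundary; once this reduction is in place, conformal invariance and the explicit Poisson-kernel bound deliver the polynomial exponent $\pi/(\theta_2-\theta_1)$, with the polylogarithmic coupling loss producing the $-\varepsilon$.
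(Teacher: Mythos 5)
Your first inequality is fine, but the main estimate rests on a step that goes in the wrong direction. The walk is simple random walk \emph{on the graph} $W_{\theta_1,\theta_2}$, so the two wedge rays are reflecting (Neumann) for the walk; only $\overline{A}$ is absorbing. When you ``fuse $\overline{A}$ and the two wedge rays into a single effective absorbing boundary'' you enlarge the absorbing set, and the event $\{\tau^+_{\partial W^L_{\theta_1,\theta_2}}<\tau^+_{\overline{A}\cup\Gamma^u\cup\Gamma^l}\}$ is contained in $\{\tau^+_{\partial W^L_{\theta_1,\theta_2}}<\tau^+_{\overline{A}}\}$: this produces a \emph{lower} bound for the quantity you must bound from above. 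The resulting exponent $\pi/(\theta_2-\theta_1)$ is in fact false as an upper bound: take $A$ to be (a discretization of) the lower ray from radius $r$ to radius $L$. Then one ray is absorbing and the other reflecting, and unfolding across the reflecting ray gives a wedge of opening $2(\theta_2-\theta_1)$ with Dirichlet sides, so the true escape probability is of order $(r/L)^{\pi/(2(\theta_2-\theta_1))}$, much larger than $(r/L)^{\pi/(\theta_2-\theta_1)}$. (The exponent $\frac{\pi}{\theta_2-\theta_1}$ in the printed statement appears to be a typo for $\frac{\pi}{2(\theta_2-\theta_1)}$; that weaker exponent is what the paper's proof yields and is all that is needed in \eqref{what we need} and Theorem \ref{thm:Beu_estimate}.) Relatedly, the extremal configuration here is not an interior radial slit: the paper's Lemma \ref{lemma push to boundary} uses connectedness of $A$, its contact with $\partial W^L_{\theta_1,\theta_2}$ and planarity to push $A$ onto one of the two boundary rays ($\Gamma^{u,r,L}$ or $\Gamma^{l,r,L}$), after which the reflection principle across the remaining Neumann ray is precisely what produces the factor $2$ in the denominator of the exponent.

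The discrete-to-continuous transfer is also not justified as stated. A Skorokhod-type coupling of reflected random walk with reflected Brownian motion in a cone with a corner, with quantitative (polylogarithmic) error, is not available off the shelf, and the paper explicitly avoids any such quantitative input. Instead it splits the annulus between radii $r$ and $L$ into roughly $\log_K(L/r)$ multiplicative scales, bounds the per-scale escape probability by its continuum value plus $\varepsilon$ using only the soft invariance principle (Lemma \ref{lemma weak converge}) via a compactness/contradiction argument (Lemma \ref{lemma connect}), and multiplies over the scales; the $-\varepsilon$ loss in the exponent comes from this additive error per scale, not from a coupling error. Your Poisson-kernel computation in the half-disk is correct for the full-Dirichlet problem, but that is the wrong boundary-value problem for this lemma.
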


The proof of Lemma \ref{lemma_upper_bound_return} is presented in Subsection \ref{subsec:1} and the proof of  Lemma \ref{lemma_escape_2} can be found in Subsection \ref{subsec:2}. 

% ********************************************************************************************************************************************************
\subsection{Proof of Lemma \ref{lemma_upper_bound_return}}\label{subsec:1}

We will use the theory of electrical networks in order to estimate the effective resistance from $u$ to $0$ in the graph $W_{\theta_1,\theta_2}$. Since $0\in W_{\theta_1,\theta_2}^r\subset  A$ it follows that $\tau^+_{\overline{A}}\leq  \tau_0$, and therefore there was a mistake here with the definition of effective resistance, but the proof is still correct.
\[
	\mathbf{P}^u_{\theta_1, \theta_2}(\tau^+_{\overline{A}}<\tau^+_u)
	\geq \mathbf{P}^u_{\theta_1, \theta_2}(\tau_{0}<\tau^+_u)
\]

Recall that the effective resistance from a vertex $v\subset W_{\theta_1,\theta_2}$ to $Z\subset W_{\theta_1,\theta_2}$ is given by 
\[
	\CR(v\to Z) = \frac{1}{\deg(v) \BP_{\theta_1,\theta_2}^v(\tau^+_{v}  < \tau_Z) }.
\]	

Repeating the argument in \cite{LP17}[Proposition 2.15] for the graph $W_{\theta_1,\theta_2}$ instead of $\BZ^2$ shows that for every $-\pi/2\leq \theta_1<\theta_2\leq \pi/2$, there exists a constant $C\in (0,\infty)$ so that 
\[
	\CR(0\to x) \in [C^{-1} \log \|x\|_2 , C\log \|x\|_2],\qquad \forall x\in W_{\theta_1,\theta_2}\,. 
\]
Noting that $\deg(x)\in \{1,2,3,4\}$ for every $x\in W_{\theta_1,\theta_2}$, the result follows. \qed

\subsection{Proof of Lemma \ref{lemma_escape_2} - Escaping probability to distance $L$}\label{subsec:2}

Having completed the proof of Lemma \ref{lemma_upper_bound_return}, we turn to the proof of Lemma \ref{lemma_escape_2}. The proof of the latter contains several subclaims: (i) Finding the worst possile choice for the set $A$, (ii) explicit calculation for the continuous counterpart of the upper bound obtained in step (i) and (iii) using the invariance principle to compare the discrete and the continuous probabilities. We now turn to implement this strategy.

\subsubsection{Proof of Lemma \ref{lemma_escape_2} part (i) - The worst choice for $A$}

We start by showing that the worst choice for $A$, namely the set which maximizes the probability $\mathbf{P}^y_{\theta_1, \theta_2}(\tau^+_{\partial W^L_{\theta_1,\theta_2}}<\tau^+_{\overline{A}})$ among all sets $A\subset W^L_{\theta_1,\theta_2}$ as in Theorem \ref{thm:Beu_estimate} is given by one of the lines along the wedge boundary.

To this end we define the {\bf discrete upper and lower boundaries} of $W_{\theta_1, \theta_2}$ by 
\[
	\Gamma^u_{\theta_1, \theta_2}=\Big\{x\in W_{\theta_1, \theta_2} ~:~ \exists y\in\BZ^2 \text{ satisfying } \|y-x\|_\infty=1 \text{ and }\arctan(y_2/y_1)>\theta_2  \Big\}
\]
and 
\[
	\Gamma^l_{\theta_1, \theta_2}=\Big\{x\in W_{\theta_1, \theta_2} ~:~ \exists y\in\BZ^2 \text{ satisfying } \|y-x\|_\infty=1 \text{ and }\arctan(y_2/y_1)<\theta_1  \Big\},
\]
where we use the same convention regarding the function $\arctan$ as in the introduction. Also, for $0<r<L$ and $\alpha \in \{u,l\}$, we denote 
\[
	\Gamma^{\alpha,r,L}=\Gamma^{\alpha,r,L}_{\theta_1,\theta_2} :=\partial W^r_{\theta_1,\theta_2}\cup (\Gamma^{\alpha}_{\theta_1,\theta_2}\cap (W^{L}_{\theta_1,\theta_2} \setminus W^r_{\theta_1,\theta_2})) .
\]

\begin{lemma}\label{lemma push to boundary}
For all sufficiently large $r$, all sufficiently large $L> r$, every set $A$ as in Theorem \ref{thm:Beu_estimate} and every $y\in \partial W^r_{\theta_1,\theta_2}$

\begin{equation}\label{push to boundary}
\begin{aligned}
	&\mathbf{P}^y_{\theta_1, \theta_2}(\tau^+_{\partial W^L_{\theta_1,\theta_2}}<\tau^+_{\overline{A}})\\
&\qquad \le \max\big\{\mathbf{P}^y_{\theta_1, \theta_2}\big(\tau^+_{\partial W^L_{\theta_1,\theta_2}}<\tau^+_{\Gamma^{u,r,L}}\big), 
\mathbf{P}^y_{\theta_1, \theta_2}\big(\tau^+_{\partial W^L_{\theta_1,\theta_2}}< \tau^+_{\Gamma^{l,r,L}}\big) \big\}. 
\end{aligned}
\end{equation}
\end{lemma}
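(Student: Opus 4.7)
The plan is to prove Lemma \ref{lemma push to boundary} via a topological-monotonicity argument, showing that the connected barrier $\overline{A}$ can be ``pushed'' onto one of the straight wedge boundaries $\Gamma^u$ or $\Gamma^l$ without decreasing the escape probability. The dichotomy in the $\max$ on the right-hand side will correspond to whether $y$ sits on the upper or the lower side of the barrier $\overline{A}$.

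First, I would exploit the planar topology of the wedge: since $\overline{A}$ is connected and meets $\partial W^L_{\theta_1,\theta_2}$, it splits the annular region $W^L_{\theta_1,\theta_2} \setminus W^r_{\theta_1,\theta_2}$ into connected components. By a discrete Jordan-curve-type argument, each such component is bounded by portions of $\overline{A}$, $\Gamma^u$, $\Gamma^l$, $\partial W^r_{\theta_1,\theta_2}$, and $\partial W^L_{\theta_1,\theta_2}$. In particular the component $U$ containing $y$ either touches $\Gamma^u$ but not $\Gamma^l$, or touches $\Gamma^l$ but not $\Gamma^u$. The case in which $U$ touches neither forces the escape event to be empty (a walk reaching $\partial W^L$ avoiding $\overline{A}$ is impossible), while the case where $U$ touches both means $\overline{A}$ does not span the annulus, a subcase I would reduce by enlarging $\overline{A}$ by a minimal connector to either $\Gamma^u$ or $\Gamma^l$ (which can only shrink the left-hand side). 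So WLOG $U$ is upper-type; the lower-type case is symmetric and produces the other term in the $\max$.

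Second, in the upper-type case I would compare the escape probability in $U$ against $\overline{A}$ with the escape probability against the ``fully collapsed'' barrier $\Gamma^{l,r,L}$. The intuition is that $\overline{A}$ acts as a lower barrier inside $U$, and replacing it by the straight ray $\Gamma^l$ pushes the obstacle strictly further from $y$, enlarging the walk's available region. The inclusion of $\partial W^r_{\theta_1,\theta_2}$ in $\Gamma^{l,r,L}$ then accounts for the fact that after the collapse, the starting arc $\partial W^r_{\theta_1,\theta_2}$ ceases to be a ``privileged'' return locus for the walk: each excursion from $\partial W^r_{\theta_1,\theta_2}$ in $U$ that ultimately escapes to $\partial W^L_{\theta_1,\theta_2}$ should correspond, via a coupling or a path-reflection, to a single-excursion escape in the full wedge avoiding $\Gamma^{l,r,L}$.

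The main obstacle is making the second step rigorous. The difficulty is that $\Gamma^{l,r,L}$ is neither a subset nor a superset of $\overline{A}$, so the naive monotonicity of hitting probabilities in the obstacle set does not apply directly. I expect the cleanest route is to construct a path-by-path injection (or at least a bounded-multiplicity coupling) between trajectories $(S_0,\ldots,S_{\tau^+_{\partial W^L}})$ from $y$ to $\partial W^L_{\theta_1,\theta_2}$ avoiding $\overline{A}$ within $U$ and trajectories from $y$ to $\partial W^L_{\theta_1,\theta_2}$ avoiding $\Gamma^{l,r,L}$ in the full wedge. Concretely, one would fold or reflect the original path across the straight boundary $\Gamma^l$ at each time it would otherwise cross into the ``other side'' of $\overline{A}$, verifying that the folded path remains admissible for the right-hand side. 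Because simple random walk on $\mathbb{Z}^2$ is reversible and the Dirichlet problem is invariant under such reflections within the wedge, this surgery should preserve the probability weights up to a constant, yielding the desired inequality.
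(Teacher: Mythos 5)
Your first step (the upper/lower dichotomy coming from the planar separation induced by the connected barrier) matches the paper's strategy, but your second step contains a genuine gap, and in fact the difficulty you flag as ``the main obstacle'' dissolves once the separation is used correctly. The point you miss is that no coupling, injection, or path surgery is needed at all: the inequality is a \emph{pathwise event inclusion}. Extract from $\overline{A}$ a connected path $\gamma$ from $\partial W^r_{\theta_1,\theta_2}$ to $\partial W^L_{\theta_1,\theta_2}$ and set $A'=\partial W^r_{\theta_1,\theta_2}\cup\gamma\subset\overline{A}$ (note $\partial W^r_{\theta_1,\theta_2}\subset\overline{A}$ since $W^r_{\theta_1,\theta_2}\subset A$). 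If $y$ lies angularly above the point where $\gamma$ meets $\partial W^r_{\theta_1,\theta_2}$, then any walk trajectory from $y$ that stays inside $W^L_{\theta_1,\theta_2}$ and reaches $\Gamma^{l,r,L}$ must first hit $A'$, because $A'$ separates $y$ from $\Gamma^{l,r,L}$ in the planar graph $W^L_{\theta_1,\theta_2}$. Hence $\tau^+_{\overline{A}}\le\tau^+_{A'}\le\tau^+_{\Gamma^{l,r,L}}$ almost surely on the relevant time horizon, so $\{\tau^+_{\partial W^L_{\theta_1,\theta_2}}<\tau^+_{\overline{A}}\}\subseteq\{\tau^+_{\partial W^L_{\theta_1,\theta_2}}<\tau^+_{\Gamma^{l,r,L}}\}$ as events, which is exactly the required bound with no constant. (The boundary case where $y$ coincides with the endpoint of $\gamma$ on $\partial W^r_{\theta_1,\theta_2}$ is handled by conditioning on the first step.) In your language: the ``injection'' between trajectories is simply the identity map, because every admissible trajectory for the left-hand side is already admissible for the right-hand side.

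Your proposed repair mechanism would not work as stated. Reflecting or folding a path across the ray $\Gamma^l$ maps points of $W_{\theta_1,\theta_2}$ outside the wedge, so the folded path is not a trajectory of the walk on $W_{\theta_1,\theta_2}$; and an argument that only preserves probabilities ``up to a constant'' would not prove the lemma as stated (it has no constant), even though a constant could be absorbed downstream. Separately, your worry about the component of $y$ touching both $\Gamma^u$ and $\Gamma^l$ is moot: the hypotheses force $\overline{A}$ to contain a radial crossing of the annulus together with all of $\partial W^r_{\theta_1,\theta_2}$, so the complement of $A'$ in $W^L_{\theta_1,\theta_2}$ never connects the two lateral boundaries of the annular region.
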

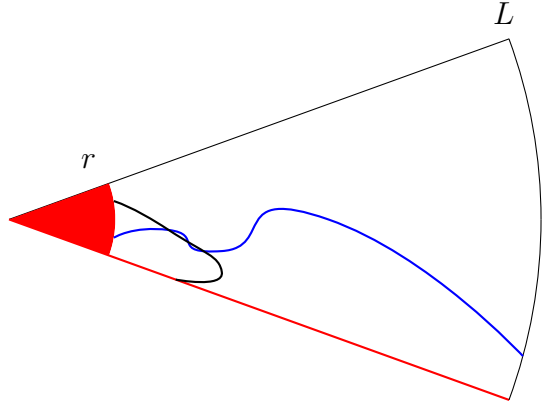
\begin{figure}
\begin{tikzpicture}[scale=0.7]%[every shadow/.style={fill=black!30,shadow xshift=0.2ex,shadow yshift=-0.2ex}]
\tikzstyle{redcirc}=[circle,
%circular drop shadow,
draw=black,fill=myred,thin,inner sep=0pt,minimum size=2mm]
\tikzstyle{bluecirc}=[circle,
%circular drop shadow,
draw=black,fill=blue,thin,inner sep=0pt,minimum size=2mm]

%\draw[step=0.4,black,thin] (0,-2) grid (10,2);
%\draw [black,thin] (0,0) to (10,0);
\draw [black,thin] (0,0) to (9.40,3.42);
\draw [red,thick] (0,0) to (9.40,-3.42);

%\draw (2,0) arc (0:-20:2) ;
\draw (10,0) arc (0:-20:10) ;
%\draw (2,0) arc (0:20:2) ;
\draw (10,0) arc (0:20:10) ;

\def\Radius{2}
\def\radius{0}
\begin{scope}[even odd rule]% auch zum Lokalhalten des Beschneidens
  %\clip circle[radius=\Radius];
  \clip[rotate=-20.0] (0,0) -- (0:\Radius) arc (0:40:\Radius) --cycle;
  \fill[red] 
    circle[radius=\Radius]
    circle[radius=\radius]
  ;
\end{scope}

\draw [blue,thick] plot [smooth, tension=1.2] coordinates {(1.97,-0.3472) (3,-0.2) (4,-0.6) (6,0) (9.66,-2.59)};
\draw [black,thick] plot [smooth, tension=1.2] coordinates {(1.97,0.3472) (3,-0.2) (4,-1)(3.133,-1.14)};

%\draw [blue,thick] plot [smooth, tension=2.5] coordinates {(1.97,-0.3472) (2,-0.2) (2.5,-0.6) (3,0) };
%\draw [blue,thick] plot [smooth, tension=2.5] coordinates {(3,0) (4,1.5)(5,2) (5.5,1.8) (6,0) };
%\draw [blue,thick] plot [smooth, tension=2.5] coordinates {(6,0) (6.5,-3)(9.66,-2.59) };
%\node (v1) at (1.97,-0.3472) [bluecirc] {};
%\node (v2) at (9.66,-2.59) [bluecirc] {};

\node (v3) at (1.5,1.1) {$r$};

\node (v4) at (9.3,3.9) {$L$};

%\node (v8) at (8,0.6) [bluecirc] {};

%\node (v6) at (6.97,-0.61) [bluecirc] {};
%\node (v7) at (6.89,1.22) [bluecirc] {};
%\draw [blue,thick] plot [smooth, tension=2.7] coordinates {(6.97,-0.61) (5,-0.8)(7,0.1) (8,0.6) (8.5,0.3) (9.9,1.4) };
%\draw [blue,thick] plot [smooth, tension=2.5] coordinates {(6.89,1.22) (6,1.5)(5,0.5) (8,0.6) };

%\node (v1) at (0,2) [bluecirc] {};
%\node (v2) at (5.5,0) {$L_0$};
%\node (v3) at (5.5,2) {$L_n$};
%\node (v4) at (5.5,4) {$L_{2n}$};
%\node (v5) at (5.5,8) {$L_N$};
%\node (v6) at (0.5,1) {$V_n$};

%\draw [thick] (-5,0) to (5,0);
%\draw [thick,dashed] (-5,2) to (5,2);
%\draw [thick] (-5,4) to (5,4);
%\draw [thick] (-5,8) to (5,8);
%\draw [thick,blue] (0,0) to (0,2);
%\draw (-1,2) to (0,3);
%\draw (1,2) to (0,3);
%\draw [dashed](-1,2) to (0,1);
%\draw [dashed] (1,2) to (0,1);

%\draw [red,thick] plot [smooth, tension=2.5] coordinates { (0,2) (-0.1,2.2) (0.5,2.5)};
%\draw [green,thick] plot [smooth, tension=2.5] coordinates {(0.5,2.5) (0.7,2.6)(1,2.2) (1.3,3) (2,3.2) (1.5,2.7) (0.5,4) };
%\draw [myblue,thick] plot [smooth, tension=2.5] coordinates {(0.5,4)(2,6)(1,5)(-2,3.5)(-2,8)};

%\draw[step=1cm,gray,very thin] (-0.9,-0.9) grid (7.9,2.9);
\end{tikzpicture}
\caption{Illustration of proof argument for Lemma \ref{lemma push to boundary}}
\end{figure}
\begin{proof}
	Recall that $ W^{r}_{\theta_1, \theta_2}\subset A$,  $\partial W^{L}_{\theta_1, \theta_2}\cap A\not=\emptyset$ and that $A$ is a connected subset within the wedge $W_{\theta_1,\theta_2}$. Therefore there exists a connected path $\gamma=(x^0,x^1,\ldots,x^k)$ within $\overline{A}$ from $ W^{r}_{\theta_1, \theta_2}$ to $\partial W^{L}_{\theta_1, \theta_2}$. Without loss of generality we can assume that $x^{k}\in \partial W^{L}_{\theta_1, \theta_2}$, $x^0\in \partial W^r_{\theta_1,\theta_2}$ and that $x^{1},x^{2},\cdots, x^{k-1}\in W_{\theta_1, \theta_2}\cap ( \overline{W}^r_{\theta_1,\theta_2}\cup \partial W^{L}_{\theta_1, \theta_2})^c$.
	
Let $A'= \partial W^{r}_{\theta_1, \theta_2}\cup \gamma$. Since $A'\subset \overline{A}$ we know that $\tau^+_{\overline{A}}\leq \tau^+_{A'}$ and therefore
\[
 \mathbf{P}^y_{\theta_1, \theta_2}(\tau^+_{\partial W^L_{\theta_1,\theta_2}}<\tau^+_{\overline{A}})
 \leq \mathbf{P}^y_{\theta_1, \theta_2}(\tau^+_{\partial W^L_{\theta_1,\theta_2}}<\tau^+_{A'}).
\]

Next, we turn to compare the hitting probability in $A'$ to the hitting probability in $\Gamma^{u,r,L}$ and $\Gamma^{l,r,L}$. We separate the proof into three cases according to the position of $y$ with respect to the path $\gamma$:
\begin{itemize}
	\item[(1)] The point $y$ satisfies $\arctan(y_2/y_1)>\arctan(x^0_2/x^0_1)$.
	\item[(2)] The point $y$ satisfies $\arctan(y_2/y_1)<\arctan(x^0_2/x^0_1)$.
	\item[(3)] The point $y$ satisfies $y=x^0$. 
\end{itemize}

We start with case $(1)$, since $W_{\theta_1,\theta_2}$ is a connected, planar graph and $\gamma$ is a connected path in it from $\partial W^r_{\theta_1,\theta_2}$ to $\partial W^L_{\theta_1,\theta_2}$, every path from $y$ to $\Gamma^{l,r,L}$ must hit $A'$. In particular, paths starting in $y$ that hit $\Gamma^{l,r,L}$ before hitting $\partial W^L_{\theta_1,\theta_2}$ must hit $A'$ before hitting $\partial W^L_{\theta_1,\theta_2}$. This implies that $\mathbf{P}^y_{\theta_1, \theta_2}(\tau^+_{\Gamma^{l,r,L}}\leq \tau^+_{\partial W^L_{\theta_1,\theta_2}})\leq \mathbf{P}^y_{\theta_1, \theta_2}(\tau^+_{A'}\leq \tau^+_{\partial W^L_{\theta_1,\theta_2}})$ and therefore 
the required inequality. Similarly, in case $(2)$, every path from $y$ to $\Gamma^{u,r,L}$ must hit $A'$ and therefore paths hitting $\Gamma^{u,r,L}$ before $\partial W^L_{\theta_1,\theta_2}$ must also hit $A'$ before $\partial W^L_{\theta_1,\theta_2}$. Hence $\mathbf{P}^y_{\theta_1, \theta_2}(\tau^+_{\partial W^L_{\theta_1,\theta_2}}\le \tau^+_{A'})\le \mathbf{P}^y_{\theta_1, \theta_2}(\tau^+_{\partial W^L_{\theta_1,\theta_2}}\le \tau^+_{\Gamma^{u,r,L}})$.

Finally, we turn to deal with case $(3)$. After one step of the random walk we have $\mathbf{P}^y_{\theta_1, \theta_2}(\tau^+_{\partial W^L_{\theta_1,\theta_2}}\le \tau^+_{A'})= \frac{1}{\deg(y)}\sum_{z\in W_{\theta_1,\theta_2} ~s.t. \|z-y\|_1=1}\mathbf{P}^z_{\theta_1, \theta_2}(\tau_{\partial W^L_{\theta_1,\theta_2}}\le \tau_{A'})$.
Since for $z\in A'$ we have $\mathbf{P}^z_{\theta_1, \theta_2}(\tau_{\partial W^L_{\theta_1,\theta_2}}\le \tau_{A'})=0$ and for $z\notin A'$ we can repeat the argument in $(1)$ and $(2)$ we conclude that
\[
\begin{aligned}
	&\mathbf{P}^y_{\theta_1, \theta_2}(\tau^+_{\partial W^L_{\theta_1,\theta_2}} \le \tau^+_{A'}) \\
	&\qquad \leq \frac{1}{\deg(y)}\sum_{\substack{z\in W_{\theta_1,\theta_2} \\ \|z-y\|_1=1}}\max\big\{\mathbf{P}^z_{\theta_1, \theta_2}\big(\tau^+_{\partial W^L_{\theta_1,\theta_2}}<\tau^+_{\Gamma^{u,r,L}}\big), \mathbf{P}^z_{\theta_1, \theta_2}\big(\tau^+_{\partial W^L_{\theta_1,\theta_2}}< \tau^+_{\Gamma^{l,r,L}}\big) \big\}\\
	&\qquad =\max\big\{\mathbf{P}^y_{\theta_1, \theta_2}\big(\tau^+_{\partial W^L_{\theta_1,\theta_2}}<\tau^+_{\Gamma^{u,r,L}}\big), 
\mathbf{P}^y_{\theta_1, \theta_2}\big(\tau^+_{\partial W^L_{\theta_1,\theta_2}}< \tau^+_{\Gamma^{l,r,L}}\big) \big\}\,,
\end{aligned}
\]
as required.
\end{proof}

Due to the last lemma it is enough to obtain bounds on 
\[
	\mathbf{P}^y_{\theta_1, \theta_2}(\tau^+_{\partial W^L_{\theta_1,\theta_2}}\le \tau^+_{\Gamma^{l,0,L}})\qquad \text{ and }\qquad 	\mathbf{P}^y_{\theta_1, \theta_2}(\tau^+_{\partial W^L_{\theta_1,\theta_2}}\le \tau^+_{\Gamma^{u,0,L}}) 
\]
for all $y\in \partial W^r_{\theta_1,\theta_2}$ and all sufficiently large $r$ and $L$. We focus here on the estimation for $\Gamma^{l,0,L}$, the bound for $\Gamma^{u,0,L}$ is obtained in the same manner.

Let $K$ be a constant to be chosen later on, for $i\geq 0$, define $M_i = rK^i$ and let $N=N(r,L,K)$ be the largest integer such that $M_N\leq L$. Then
\[
	\mathbf{P}^y_{\theta_1, \theta_2}\Big(\tau^+_{\partial W^L_{\theta_1,\theta_2}}\le \tau^+_{\Gamma^{l,0,L}}\Big)\le \mathbf{P}^y_{\theta_1, \theta_2}\Big(\tau^+_{\partial W^{M_N}_{\theta_1, \theta_2}}\le \tau^+_{\Gamma^{l,0,L}}\Big)\,,
\]
and by strong Markov property 
\begin{equation}\label{product}
\mathbf{P}^y_{\theta_1, \theta_2}\Big(\tau^+_{\partial W^{M_N}_{\theta_1, \theta_2}}\le \tau^+_{\Gamma^{l,0,L}}\Big)\le \prod_{i=0}^{N-1}
\sup_{z\in \partial W^{M_i}_{\theta_1, \theta_2}} \mathbf{P}^z_{\theta_1, \theta_2}\Big(\tau_{\partial W^{M_{i+1}}_{\theta_1, \theta_2}}\le \tau_{\Gamma^{l,M_{i},M_{i+1}}}\Big).
\end{equation}

In order to estimate each of the probabilities on the right hand side we first turn to evaluate their continuous counterpart.

\subsubsection{Proof of Lemma \ref{lemma_escape_2} part (ii) - Calculating the continuous analogue of the probability}

Define the continuous wedge between the angles $-\pi/2\leq \theta_1<\theta_2\leq \pi/2$ by 
\[
	\CW_{\theta_1, \theta_2}=\{(x_1,x_2)\in \BR^2 ~:~ \arctan(x_2/x_1)\in [\theta_1, \theta_2]\}\,,
\]
with the same convention as in the discrete case regarding $\arctan$. Furthermore, define the intersection of $\CW_{\theta_1,\theta_2}$ with the ball of radius $K$ around the origin by
\[
	\CW^K_{\theta_1, \theta_2}= \CW_{\theta_1,\theta_2}\cap \{x\in \BR^2 ~:~ \|x\|_2\leq K\}
\]
and the lower, upper and front boundaries of $\CW_{\theta_1,\theta_2}^K$ respectively, by 
\[
	\partial^l\CW_{\theta_1,\theta_2}^K = \{x\in \CW_{\theta_1,\theta_2}^K ~:~ \arctan(x_2/x_1)=\theta_1\},
\]
\[
	\partial^u\CW_{\theta_1,\theta_2}^K = \{x\in \CW_{\theta_1,\theta_2}^K ~:~ \arctan(x_2/x_1)=\theta_2\},
\]
and
\[
	\partial^f\CW_{\theta_1,\theta_2}^K = \{x\in \CW_{\theta_1,\theta_2}^K ~:~ \|x\|_2=K\}\,.
\]

Let $(B(t))_{t\geq 0}=(B^K_{\theta_1,\theta_2}(t))_{t\geq}$ denote a reflected Brownian motion in $\CW^K_{\theta_1,\theta_2}$ and denote by $T_K$ the hitting time of $\partial^l\CW_{\theta_1,\theta_2}^K\cup \partial^f\CW_{\theta_1,\theta_2}^K$. With a slight abuse of notation, we use $\mathbf{P}^x_{\theta_1,\theta_2}$ to denote the law of $(B(t))_{t\geq 0}$ with starting point $x$ as well. In this subsection, our goal is to estimate the continuous analogue of the discrete probability 
\[
	\mathbf{P}^z_{\theta_1, \theta_2}\big(\tau_{\partial W^{K L}_{\theta_1, \theta_2}}\le \tau_{\Gamma^{l,L,KL}}\big),\qquad \forall z\in \partial W^L_{\theta_1,\theta_2}
\]
given by 
\[
	\mathbf{P}^x_{\theta_1,\theta_2}(|B(T_K)|=K)\,,
\]
for all $x$ of the form $(\cos\theta,\sin\theta)$ with $\theta\in[\theta_1,\theta_2]$. 

Due to the rotation invariance of (reflected) Brownian motion we can replace the angles $\theta_1,\theta_2$ by the angles $-(\theta_2-\theta_1),0$. Furthermore, using the reflection principle and denoting by $S_K$ the hitting time of $\partial \CW^K_{\theta_1,\theta_2} = \partial \CW^l_{\theta_1,\theta_2}\cup \partial \CW^u_{\theta_1,\theta_2}\cup \partial \CW^f_{\theta_1,\theta_2}$ we conclude that 
\[
	\mathbf{P}^{(\cos\theta,\sin\theta)}_{\theta_1,\theta_2}(|B(T_K)|=K) = \mathbf{P}^{(\cos(-\theta+\theta_1),\sin(-\theta+\theta_1))}_{-\varphi,\varphi}(|B(S_K)|=K)\,,
\]
where we denoted $\varphi = \theta_2-\theta_1$. See Figure \ref{fig:reflectfar}.

\begin{figure}[h!]
	\begin{tikzpicture}[scale=0.5]%[every shadow/.style={fill=black!30,shadow xshift=0.2ex,shadow yshift=-0.2ex}]
\tikzstyle{redcirc}=[circle,
%circular drop shadow,
draw=black,fill=myred,thin,inner sep=0pt,minimum size=2mm]
\tikzstyle{bluecirc}=[circle,
%circular drop shadow,
draw=black,fill=blue,thin,inner sep=0pt,minimum size=2mm]

%\draw[step=0.4,black,thin] (0,-2) grid (10,2);
\draw [black,thin] (0,0) to (10,0);
\draw [red,ultra thick] (0,0) to (5*1.732,-5);

\draw (2,0) arc (0:-30:2) ;
\draw (10,0) arc (0:-30:10) ;

\draw [blue,thick] plot [smooth, tension=2.5] coordinates {(1.97,-0.3472) (2,-0.2) (2.5,-0.6) (3,0) };
\draw [blue,thick] plot [smooth, tension=2.5] coordinates {(3,0) (4,-1.5)(5,-2) (5.5,-1.8) (6,0) };
\draw [blue,thick] plot [smooth, tension=2.5] coordinates {(6,0) (6.5,-3)(9.66,-2.59) };
\node (v1) at (1.97,-0.3472) [bluecirc] {};
\node (v1) at (9.66,-2.59) [bluecirc] {};

\node (v3) at (1.8,0.3) {$1$};
\node (v4) at (9.7,0.3) {$K$};
\node (v5) at (1.1,-0.3) {$\varphi$};

%\node (v1) at (0,2) [bluecirc] {};
%\node (v2) at (5.5,0) {$L_0$};
%\node (v3) at (5.5,2) {$L_n$};
%\node (v4) at (5.5,4) {$L_{2n}$};
%\node (v5) at (5.5,8) {$L_N$};
%\node (v6) at (0.5,1) {$V_n$};

%\draw [thick] (-5,0) to (5,0);
%\draw [thick,dashed] (-5,2) to (5,2);
%\draw [thick] (-5,4) to (5,4);
%\draw [thick] (-5,8) to (5,8);
%\draw [thick,blue] (0,0) to (0,2);
%\draw (-1,2) to (0,3);
%\draw (1,2) to (0,3);
%\draw [dashed](-1,2) to (0,1);
%\draw [dashed] (1,2) to (0,1);

%\draw [red,thick] plot [smooth, tension=2.5] coordinates { (0,2) (-0.1,2.2) (0.5,2.5)};
%\draw [green,thick] plot [smooth, tension=2.5] coordinates {(0.5,2.5) (0.7,2.6)(1,2.2) (1.3,3) (2,3.2) (1.5,2.7) (0.5,4) };
%\draw [myblue,thick] plot [smooth, tension=2.5] coordinates {(0.5,4)(2,6)(1,5)(-2,3.5)(-2,8)};

%\draw[step=1cm,gray,very thin] (-0.9,-0.9) grid (7.9,2.9);
\end{tikzpicture}\qquad\qquad 
	\begin{tikzpicture}[scale=0.5]%[every shadow/.style={fill=black!30,shadow xshift=0.2ex,shadow yshift=-0.2ex}]
\tikzstyle{redcirc}=[circle,
%circular drop shadow,
draw=black,fill=myred,thin,inner sep=0pt,minimum size=2mm]
\tikzstyle{bluecirc}=[circle,
%circular drop shadow,
draw=black,fill=blue,thin,inner sep=0pt,minimum size=2mm]

%\draw[step=0.4,black,thin] (0,-2) grid (10,2);
\draw [black,thin] (0,0) to (10,0);
\draw [red,ultra thick] (0,0) to (5*1.732,-5);
\draw [red,ultra thick] (0,0) to (5*1.732,5);

\draw (2,0) arc (0:-30:2) ;
\draw (10,0) arc (0:-30:10) ;
\draw (2,0) arc (0:30:2) ;
\draw (10,0) arc (0:30:10) ;

\draw [blue,thick] plot [smooth, tension=2.5] coordinates {(1.97,-0.3472) (2,-0.2) (2.5,-0.6) (3,0) };
\draw [blue,thick] plot [smooth, tension=2.5] coordinates {(3,0) (4,1.5)(5,2) (5.5,1.8) (6,0) };
\draw [blue,thick] plot [smooth, tension=2.5] coordinates {(6,0) (6.5,-3)(9.66,-2.59) };
\node (v1) at (1.97,-0.3472) [bluecirc] {};
\node (v2) at (9.66,-2.59) [bluecirc] {};

\node (v3) at (1.8,0.3) {$1$};
\node (v4) at (9.7,0.3) {$K$};

\node (v5) at (1.1,-0.3) {$\varphi$};
\node (v5) at (1.1,0.3) {$\varphi$};

%\node (v1) at (0,2) [bluecirc] {};
%\node (v2) at (5.5,0) {$L_0$};
%\node (v3) at (5.5,2) {$L_n$};
%\node (v4) at (5.5,4) {$L_{2n}$};
%\node (v5) at (5.5,8) {$L_N$};
%\node (v6) at (0.5,1) {$V_n$};

%\draw [thick] (-5,0) to (5,0);
%\draw [thick,dashed] (-5,2) to (5,2);
%\draw [thick] (-5,4) to (5,4);
%\draw [thick] (-5,8) to (5,8);
%\draw [thick,blue] (0,0) to (0,2);
%\draw (-1,2) to (0,3);
%\draw (1,2) to (0,3);
%\draw [dashed](-1,2) to (0,1);
%\draw [dashed] (1,2) to (0,1);

%\draw [red,thick] plot [smooth, tension=2.5] coordinates { (0,2) (-0.1,2.2) (0.5,2.5)};
%\draw [green,thick] plot [smooth, tension=2.5] coordinates {(0.5,2.5) (0.7,2.6)(1,2.2) (1.3,3) (2,3.2) (1.5,2.7) (0.5,4) };
%\draw [myblue,thick] plot [smooth, tension=2.5] coordinates {(0.5,4)(2,6)(1,5)(-2,3.5)(-2,8)};

%\draw[step=1cm,gray,very thin] (-0.9,-0.9) grid (7.9,2.9);
\end{tikzpicture}
	\caption{Reflecting the Dirichlet boundary conditions \label{fig:reflectfar}}
\end{figure}
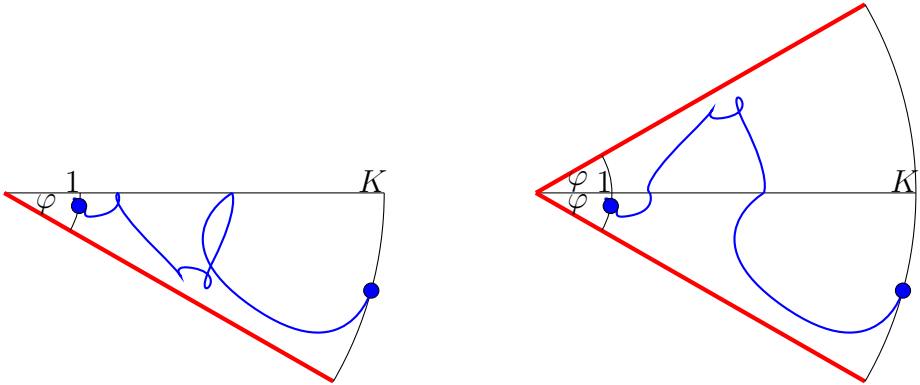

The probability on the right hand side was calculated in \cite{morters2010brownian} for the case $\theta=\theta_1$

\begin{lemma}[\cite{morters2010brownian} Theorem 7.24]\label{lem:cont_prob} Let $\varphi\in (0,\pi]$ and $K>1$. Then 

 \[
 	\mathbf{P}^{(1,0)}_{-\varphi,\varphi}(|B(S_K)|=K) = \frac{2}{\pi}\arctan\bigg(\frac{2K^{\pi/2\varphi}}{K^{\pi/\varphi}-1}\bigg).
 \]
\end{lemma}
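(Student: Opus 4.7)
The plan is to apply conformal invariance of planar Brownian motion in two stages, reducing the wedge exit probability to a Dirichlet problem on a half-strip that I can then solve explicitly by Fourier series. First I would use the conformal map $\Phi(z)=z^{\pi/(2\varphi)}$, which sends the open wedge $\CW_{-\varphi,\varphi}$ onto the open right half-plane $\BH_r=\{w:\mathrm{Re}\,w>0\}$. It fixes the starting point $z=1$, maps the two bounding rays $\{\arg z=\pm\varphi\}$ to the imaginary axis, and sends the circular arc $\{|z|=K\}\cap\CW_{-\varphi,\varphi}$ onto $\{|w|=R\}\cap\BH_r$ with $R:=K^{\pi/(2\varphi)}$. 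Conformal invariance of Brownian motion (up to a time change, which does not affect exit distributions) then turns the target probability into the probability that BM started at $w=1$ exits the right half-disc of radius $R$ through the arc rather than the imaginary diameter.

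Next I would apply the conformal map $\Psi(w)=\log w$, which sends $\BH_r$ onto the strip $\mathcal{S}:=\{z:|\mathrm{Im}\,z|<\pi/2\}$. This carries $1\mapsto 0$, the imaginary axis to the horizontal sides $\{\mathrm{Im}\,z=\pm\pi/2\}$, and the arc $\{|w|=R\}$ to the vertical segment $\{\mathrm{Re}\,z=\log R\}$, so the question becomes one of evaluating, at the origin, the bounded harmonic function $u$ on the half-strip $\mathcal{H}:=\{\mathrm{Re}\,z<\log R,\,|\mathrm{Im}\,z|<\pi/2\}$ satisfying $u=1$ on the right edge and $u=0$ on the top and bottom. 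Separation of variables provides the eigenfunctions $e^{(2k+1)(x-\log R)}\cos((2k+1)y)$, $k\ge 0$, each decaying at $-\infty$ and vanishing at $y=\pm\pi/2$; expanding the constant $1$ in the Fourier cosine basis of $(-\pi/2,\pi/2)$ yields
\[
u(x,y)=\frac{4}{\pi}\sum_{k=0}^\infty \frac{(-1)^k}{2k+1}\,e^{(2k+1)(x-\log R)}\cos((2k+1)y),
\]
and plugging in $(0,0)$ together with Leibniz's series for $\arctan$ produces $u(0,0)=\tfrac{4}{\pi}\arctan(R^{-1})$.

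The final step is a trigonometric identity: since the tangent double-angle formula gives $2\arctan(1/t)=\arctan(2t/(t^{2}-1))$ for $t>1$, the answer becomes $\tfrac{2}{\pi}\arctan\bigl(2K^{\pi/(2\varphi)}/(K^{\pi/\varphi}-1)\bigr)$, matching the stated claim. The calculations are routine; the only point that requires genuine care, and that I view as the main obstacle, is verifying that the bounded harmonic function built from the Fourier expansion really does represent the Brownian exit probability of the unbounded half-strip $\mathcal{H}$. This would follow from the observation that $\mathrm{Im}\,B_t$ is one-dimensional Brownian motion exiting $(-\pi/2,\pi/2)$ almost surely in finite time (so BM exits $\mathcal{H}$ almost surely), combined with uniqueness of bounded harmonic extensions on the half-strip.
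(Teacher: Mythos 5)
Your derivation is correct, but it is worth pointing out that the paper does not actually prove this lemma at all: it is quoted verbatim as Theorem 7.24 of M\"orters--Peres and used as a black box. So what you have produced is a genuine standalone proof where the paper offers only a citation. Your route is the standard one and all the steps check out: $z\mapsto z^{\pi/(2\varphi)}$ opens the wedge of half-angle $\varphi$ into the right half-plane and carries the arc $\{|z|=K\}$ to $\{|w|=R\}$ with $R=K^{\pi/(2\varphi)}$; $\log$ then turns the half-disc into the half-strip $\{x<\log R,\ |y|<\pi/2\}$; the Fourier coefficients $a_k=\frac{4(-1)^k}{\pi(2k+1)}$ are right; evaluating at the origin gives $\frac{4}{\pi}\arctan(R^{-1})$ by the Leibniz series (legitimate since $R>1$); and the double-angle identity $2\arctan(1/t)=\arctan\bigl(2t/(t^2-1)\bigr)$, valid for $t>1$ because $\arctan(1/t)\in(0,\pi/4)$, converts this into the stated expression with $R^2=K^{\pi/\varphi}$. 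Note also that since $S_K$ is the hitting time of the \emph{entire} boundary of $\CW^K_{-\varphi,\varphi}$, the reflection in the definition of $B$ never acts before $S_K$, so replacing the reflected process by ordinary planar Brownian motion, as you implicitly do, is harmless. The one point you flag as delicate --- that the Fourier series really is the harmonic measure of the right edge --- is handled by exactly the ingredients you name (almost sure exit via the imaginary part, plus uniqueness of bounded harmonic extensions, all boundary points of the half-strip being regular); if you want to make the boundedness of the series near the two corners completely explicit, observe that your sum equals $\frac{4}{\pi}\,\mathrm{Re}\arctan\bigl(e^{(x-\log R)+iy}\bigr)$, and $\mathrm{Re}\arctan$ is bounded by $\pi/4$ on the closed unit disc.
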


In addition, using conformal maps (or alternatively the Beurling estimation) one can verify that 
\begin{equation}\label{Maximum boundary}
	\mathbf{P}^x_{-\varphi,\varphi}(|B(S_K)|=K)\leq \mathbf{P}^{(1,0)}_{-\varphi,\varphi}(|B(S_K)|=K),
\end{equation}
for all $x$ of the form $(\cos\psi,sin\psi)$ with $\psi\in [-\varphi,\varphi]$.

\subsubsection{Proof of Lemma \ref{lemma_escape_2} part (iii) - from continuous to discrete and completion of the proof}

In this subsection, we use Lemma \ref{lem:cont_prob} in order to find an upper bound on the discrete probability 
\[
\mathbf{P}^z_{\theta_1, \theta_2}\Big(\tau_{\partial W^{K L}_{\theta_1, \theta_2}}\le \tau_{\Gamma^{l,L,KL}}\Big),\qquad \forall z\in \partial W^L_{\theta_1,\theta_2}
\]

\begin{lemma}\label{lemma connect}
	Let $K>0$. For every $\varepsilon>0$, there exists a constant $L_0\in (0,\infty)$ such that for all $L\ge L_0$, and all $z\in \partial  W^L_{\theta_1,\theta_2}$
\begin{equation}\label{continuous probability 6}
	\mathbf{P}^z_{\theta_1, \theta_2}\Big(\tau_{\partial W^{KL}_{\theta_1, \theta_2}}\le \tau_{\Gamma^{l,L,KL}_{\theta_1, \theta_2}}\Big)\le \mathbf{P}^{(1,0)}_{-\varphi,\varphi}(|B(S_{K})|=K)+\varepsilon,
\end{equation}
where as before, we denote $\varphi=\theta_2-\theta_1$.
\end{lemma}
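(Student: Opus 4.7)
The plan is to reduce the discrete hitting probability to its continuous counterpart by an invariance principle after rescaling by $1/L$. Define $S^{(L)}(t):=L^{-1}S_{\lfloor 2L^{2}t\rfloor}$ where $(S_n)_{n\ge 0}$ is the simple random walk on $W_{\theta_1,\theta_2}$ started at $z$. The rescaled starting point $z^{(L)}:=z/L$ lies within $O(1/L)$ of the unit arc of $\CW_{\theta_1,\theta_2}$; the outer target $\partial W^{KL}_{\theta_1,\theta_2}$ rescales to $\partial^{f}\CW^{K}_{\theta_1,\theta_2}$; and the staircase $\Gamma^{l,L,KL}_{\theta_1,\theta_2}$ Hausdorff-converges to the closed inner unit arc together with the lower ray $\partial^{l}\CW^{K}_{\theta_1,\theta_2}$. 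Since the upper discrete boundary $\Gamma^{u}$ is not part of the stopping set, it acts as a reflecting boundary for the walk, matching normal reflection on $\partial^{u}\CW^{K}_{\theta_1,\theta_2}$ in the continuous picture.

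I would then invoke a standard invariance principle (in the style of Stroock--Varadhan or Lions--Sznitman) for reflected walks on piecewise-smooth planar domains, concluding that $S^{(L)}(\cdot)$ converges weakly in the Skorokhod topology to a normally reflected Brownian motion on $\CW_{\theta_1,\theta_2}$. The corner at the origin is avoided during the relevant time window because the walk is stopped upon returning to the inner arc. Since the boundary pieces $\partial^{l}\CW^{K}_{\theta_1,\theta_2}$ and $\partial^{f}\CW^{K}_{\theta_1,\theta_2}$ are $C^{1}$ arcs on which the reflected BM is regular and non-tangential almost surely, the exit location is a.s.\ a continuous functional of the limiting path. Portmanteau then yields
\[
    \limsup_{L\to\infty}\sup_{z\in\partial W^{L}_{\theta_1,\theta_2}}\mathbf{P}^{z}_{\theta_1,\theta_2}\big(\tau_{\partial W^{KL}_{\theta_1,\theta_2}}\le \tau_{\Gamma^{l,L,KL}_{\theta_1,\theta_2}}\big)
    \le \sup_{w\in\partial^{i}\CW^{K}_{\theta_1,\theta_2}}\mathbf{P}^{w}_{\theta_1,\theta_2}\big(|B(T_{K})|=K\big),
\]
where $\partial^{i}\CW^{K}_{\theta_1,\theta_2}$ denotes the unit inner arc and uniformity in $z$ follows from compactness of the unit arc together with continuity of the continuous exit probability in the starting point.

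To convert the right-hand side into the quantity appearing in \eqref{continuous probability 6}, I apply the reflection across the upper ray described just before Lemma \ref{lem:cont_prob}: this unfolds the reflected BM on $\CW_{\theta_1,\theta_2}$ into a standard absorbed BM on the symmetric wedge $\CW_{-\varphi,\varphi}$, with starting point on the unit circle at some angle $\psi\in[-\varphi,\varphi]$. The maximum-principle inequality \eqref{Maximum boundary} then bounds the result by $\mathbf{P}^{(1,0)}_{-\varphi,\varphi}(|B(S_{K})|=K)$, so that \eqref{continuous probability 6} holds up to an additive $\ep$ for all $L\ge L_{0}(\ep)$.

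The main obstacle I anticipate is the interaction between the microscopic roughness of the two discrete boundary features, which approximate rather different smooth features of the continuous wedge. On the lower side, one must rule out mass loss through the microscopic gaps between staircase vertices of $\Gamma^{l}$; this is handled by the geometric fact that $\Gamma^{l}$ is a connected staircase separating $W_{\theta_1,\theta_2}$ from $\BZ^{2}\setminus W_{\theta_1,\theta_2}$, so any lattice path exiting the wedge from below traverses a vertex of $\Gamma^{l}$, and after rescaling these vertices are dense along $\partial^{l}\CW^{K}_{\theta_1,\theta_2}$. On the upper side, one must verify the invariance principle for the reflected walk uniformly in the starting angle, which reduces by a local coupling argument to the well-known half-plane reflected invariance principle. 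Once both are in place, the inequality \eqref{continuous probability 6} follows.
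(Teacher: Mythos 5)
Your proposal is correct and follows essentially the same route as the paper: an invariance principle for the reflected walk in the wedge (the paper cites Burdzy--Chen, resting on Stroock--Varadhan) to pass to reflected Brownian motion, followed by the unfolding reflection and the maximum inequality \eqref{Maximum boundary} to reduce to the starting point $(1,0)$ in $\CW_{-\varphi,\varphi}$. The only difference is cosmetic: you obtain uniformity in $z$ directly via compactness of the unit arc, whereas the paper argues by contradiction, extracting a subsequence $z_{k_n}/L_{k_n}\to z_0$ and applying the invariance principle along it.
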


The main ingredient in the proof of lemma \ref{lemma connect} is the invariance principle for simple random walk in a wedge. 

\begin{lemma}[Lemma 2.1 in \cite{2006}, Theorem 6.3 of \cite{1971}] 
\label{lemma weak converge}
Let $D$ be a bounded connected open set with an analytic (smooth) boundary in $\RR^d$, $d\ge 2$ and let $D_\ep=\ep \ZZ^d\cap D$. Suppose that $x_\ep \in D_\ep$ and $x_\ep\to x_0\in \overline{D}$ as  $\ep\to0$. Let $\{W^\ep_t, \ t\ge 0\}$ be simple random walk on $D_\ep$ with $W^\ep_0=x_\ep$. Then $(W^\ep_t)_{t\ge 0}$ converge weakly to  reflected Brownian motion on $\overline{D}$ starting from $x_0$ as $\ep\to0$.  
\end{lemma}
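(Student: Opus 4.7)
The plan is to establish the invariance principle through the standard two-step scheme of (a) tightness of the rescaled walk in path space, and (b) identification of every subsequential limit as reflected Brownian motion on $\overline{D}$ via the Stroock--Varadhan submartingale characterization.

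First I would show that the family $(W^\epsilon)_{\epsilon>0}$ is tight in $D([0,T],\overline{D})$ for every $T>0$. Compact containment is automatic since $\overline{D}$ is bounded. For the modulus of continuity I would use Aldous' criterion together with a second-moment estimate: the one-step increment of $W^\epsilon$ has mean $O(\epsilon^2)$ (zero at interior sites, and bounded by a reflection correction at boundary sites of $D_\epsilon$) and variance $O(\epsilon^2)$. After the diffusive time rescaling, this gives $\mathbb{E}[\|W^\epsilon_{t+h}-W^\epsilon_t\|^2\mid \CF^\epsilon_t]\leq Ch$ uniformly, which via a Doob-style bound yields tightness. Since individual jumps are of size $\epsilon\to 0$, any weak limit has continuous paths.

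Next I would verify, for any subsequential limit $W$, Stroock and Varadhan's submartingale property: for every $f\in C^2(\overline{D})$ with $\partial f/\partial n\geq 0$ on $\partial D$, the process $f(W_t)-\tfrac{1}{2}\int_0^t\Delta f(W_s)\,ds$ is a submartingale. This is checked at the prelimit level by computing $\mathbb{E}[f(W^\epsilon_{t+\epsilon^2})-f(W^\epsilon_t)\mid\CF^\epsilon_t]$. At an interior vertex, the four neighbors are present and a Taylor expansion produces $\tfrac{\epsilon^2}{2d}\Delta f+O(\epsilon^3)$, matching the $\tfrac{1}{2}\Delta f$ term after summation. At a boundary vertex, some neighbors in $\BZ^d\setminus D_\epsilon$ are missing and the transition kernel assigns their mass so as to keep the walk in $D_\epsilon$; the Taylor expansion in the deleted directions then produces, to leading order, a term proportional to $\epsilon\,\partial f/\partial n$ evaluated at a nearby boundary point, which is nonnegative by hypothesis. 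Smoothness of $\partial D$ allows this expansion to be carried out uniformly via a tubular neighborhood and a local straightening of the boundary.

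Finally, Stroock and Varadhan's uniqueness theorem for the submartingale problem on smooth domains (Theorem 5.6 of \cite{1971}, applied to the operator $\tfrac{1}{2}\Delta$ with reflecting boundary condition) identifies any such limit as reflected Brownian motion on $\overline{D}$ starting from $x_0$. Since the limit is unique and every subsequential limit agrees with it, the full family $(W^\epsilon)$ converges weakly to reflected Brownian motion started at $x_0$.

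The main obstacle is the boundary analysis in the second step: one must quantify, uniformly in $\epsilon$ and in the boundary vertex, the exact contribution of the "deleted neighbor" expansion, and verify that cumulatively, over the $O(\epsilon^{-2})$ steps lying on $\partial D_\epsilon$, this discrete pushing converges to the local-time reflection term $\int_0^t \nabla f\cdot n(W_s)\,dL_s$ of continuous reflected Brownian motion. Analyticity (or smoothness) of $\partial D$ is exactly what is used here, since it permits a uniform local chart in which the discrete boundary approximation of a half-space suffices and the leading-order normal-derivative term can be extracted cleanly.
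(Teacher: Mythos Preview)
The paper does not give its own proof of this lemma: it is stated as a citation of Lemma~2.1 in \cite{2006} and Theorem~6.3 of \cite{1971}, with only a short remark afterward noting that the analyticity hypothesis in \cite{2006} can be relaxed to smoothness because the result is ultimately derived from Stroock--Varadhan's Theorem~6.3. So there is nothing to compare against in the present paper.

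That said, your sketch is exactly the Stroock--Varadhan scheme that \cite{1971} carries out: tightness via moment bounds, followed by identification of subsequential limits through the submartingale problem and its well-posedness on smooth domains. In that sense your proposal is aligned with the cited source rather than with anything the authors here do themselves. If you were to flesh it out, the point you flag as the ``main obstacle'' --- controlling the discrete boundary pushing uniformly and matching it to the local-time term --- is indeed where all the work lies, and is precisely the content of Theorem~6.3 in \cite{1971}; your outline correctly isolates it but does not resolve it, so as written this is a proof plan rather than a proof.
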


\begin{remark}
In \cite{2006}, it was assumed throughout the paper that $D$ is a bounded, connected, open set with analytic boundary. The analyticity assumption is made for technical reason, needed in proving their main result. However, it is noted in the paper that the lemma above is derived from Theorem 6.3 of \cite{1971}, which holds for smooth regions as well. 
\end{remark}

\begin{proof}[Proof of Lemma \ref{lemma connect}]
	Suppose the lemma does not hold. Then, there exists $\ep_0>0$ and an increasing sequence $(L_n)_{n\geq 1}$ going to infinity together with a sequence of points $(z_n)_{n\geq 1}$ such that $z_n\in \partial  W^{L_n}_{\theta_1,\theta_2}$ for all $n\geq 1$ such that
\begin{equation}\label{contradiction 1}
\mathbf{P}^{z_n}_{\theta_1, \theta_2}\left(\tau_{\partial W^{KL_n}_{\theta_1, \theta_2}}\le \tau_{\Gamma^{l,L_n,KL_n}_{\theta_1, \theta_2}}\right)\ge \mathbf{P}^{(1,0)}_{-\varphi,\varphi}(|B(S_{K})|=K)+\varepsilon_0.
\end{equation}
Noting that $|z_n/L_n|\to 1$ as $n\to\infty$, it follows that there exists a subsequence $k_n$ such that 
\[
	\lim_{n\to\infty} z_{k_n}/L_{k_n}= z_0\in \{x\in\CW_{\theta_1,\theta_2} ~:~ \|x\|_2=1\}. 
\]
Consequently, by Lemma \ref{lemma weak converge} and \eqref{Maximum boundary} we have 
\begin{equation}\label{continuous probability 7}
\begin{aligned}
\lim_{n\to\infty} \mathbf{P}^{z_{k_n}}_{\theta_1, \theta_2}\Big(\tau_{\partial W^{KL_{k_n}}_{\theta_1, \theta_2}}\le \tau_{\Gamma^{l,L_{k_n},KL_{k_n}}_{\theta_1, \theta_2}}\Big)&=\mathbf{P}^{z_{0}}_{\theta_1, \theta_2}\big(\big|B_k^{\theta_1,\theta_2}(\infty)\big|=K\big)\\
&\le  \mathbf{P}^{(1,0)}_{-\varphi,\varphi}(|B(S_{K})|=K)
\end{aligned}
\end{equation}
contradicting \eqref{contradiction 1}. 
\end{proof}

At this point we have all the ingredients needed in order to complete the proof of Theorem \ref{thm:Beu_estimate}. Recalling Lemma \ref{Maximum boundary}, we observe that there exists a universal constant $C\in (0,\infty)$ such that
\begin{equation}\label{continuous probability 4}
\mathbf{P}^{(1,0)}_{-\varphi,\varphi}(|B(S_K)|=K) = \frac{2}{\pi}\arctan\bigg(\frac{2K^{\pi/2\varphi}}{K^{\pi/\varphi}-1}\bigg)\le  \frac{C}{K^{\pi/2\varphi}},\qquad \forall K\geq 2.
\end{equation}
Furthermore, since $\lim_{K\to\infty}\log_K(C+1)\to 0$, there exists $K_0\in (1,\infty)$ such that 
\begin{equation}\label{continuous probability 10}
	0<\log_{K_0}(C+1)<\frac{\ep}{4}.
\end{equation}

Taking $\ep=\ep_{K_0}=K_0^{-\pi/2\varphi}$ in Lemma \ref{lemma connect}, we conclude that exists $R_0\in (0,\infty)$ such that for all $r\ge R_0$ and all $z\in  \partial  W^r_{\theta_1,\theta_2}$
\begin{equation}\label{continuous probability 11}
\begin{aligned}
	\mathbf{P}^z_{\theta_1, \theta_2}\Big(\tau_{\partial W^{rK_0}_{\theta_1, \theta_2}}\le \tau_{B^{l,0,rK_0}_{\theta_1, \theta_2}}\Big)&\le \mathbf{P}^{(1,0)}_{-\varphi,\varphi}(|B(S_K)|=K)+\ep_{K_0}\leq \frac{C+1}{K_0^{\pi/2\varphi}}. 
\end{aligned}
\end{equation}

Next, recall that $N_{K_0}$ was defined to be the largest integer $n$ such that $rK_0^n\le L$, which implies that $N_{K_0}=\lfloor\log_{K_0}(L/r)\rfloor$. Consequently, for all $r\ge R_0$, and $L$ sufficiently large so that 
\[
\bigg(\frac{L}{r}\bigg)^{\ep/2}\ge K_0^{\pi/2\varphi}
\]
we have 
\begin{equation}\label{product 2}
\begin{aligned}
	&\mathbf{P}^y_{\theta_1, \theta_2}\Big(\tau_{\partial W^{M_{N(K_0)}}_{\theta_1, \theta_2}}\le \tau_{B^{l,0,L}_{\theta_1, \theta_2}}\Big)\le \prod_{i=0}^{N-1}
\sup_{z\in \partial W^{M_i}_{\theta_1, \theta_2}} \mathbf{P}^z_{\theta_1, \theta_2}\left(\tau_{\partial W^{M_{i+1}}_{\theta_1, \theta_2}}\le \tau_{\Gamma^{l,M_{i},M_{i+1}}}\right)\\
&\qquad \le \bigg(\frac{C+1}{K_0^{\pi/2\varphi}}\bigg)^{\log_{K_0}(L/r)-1}
%&\le K_0^{\pi/2\delta} \bigg(\frac{C+1}{K_0^{\pi/2\varphi}}\bigg)^{\log_{K_0}(L/r)}\\
\leq  K_0^{\pi/2\delta} \bigg(\frac{r}{L} \bigg)^{\pi/2\varphi-\log_{K_0}(C+1)}\\
&\qquad \le  K_0^{\pi/2\delta} \bigg(\frac{r}{L} \bigg)^{\pi/2\varphi-\frac{1}{4}\ep}
 \le \bigg(\frac{r}{L} \bigg)^{\pi/2\varphi-\ep}.
\end{aligned}
\end{equation}
Thus, the proof of Lemma \ref{lemma_escape_2} is complete. \qed

% **************************************************************************************************************
% **************************************************************************************************************
\appendix
\section{Existence of infinite harmonic measure}
\label{section: harmonic} 

In this section, we prove Theorem \ref{theorem:harm}. The convergence is proved by showing that for any $y\in W_{\theta_1,\theta_2}$ and any sequence $(x_n)_{n\geq 1}$ in $W_{\theta_1,\theta_2}$ such that $\lim_{n\to\infty}\|x_n\|_2=\infty$, the sequence $(\CH_A(x_n,y))_{n\geq 1}$ is Cauchy. 

Let $(x_n)_{n\geq 1}$ be a sequence as above. Since $A$ is finite one can find $r>0$ such that $A\subset W^r_{\theta_1,\theta_2}$ and thus $\CH_A(x,y)=0$ for all $y\notin W^R_{\theta_1,\theta_2}$ and $x\in W_{\theta_1,\theta_2}$. Hence, we can restrict attention to $y\in W^r_{\theta_1,\theta_2}$. Since $\lim_{n\to\infty}\|x_n\|_2=\infty$, we can assume without loss of generality that $\|x_n\|_2>r$ for all $n\geq 1$ 

Note that for any $m,n\in\BN$ such that $\|x_n\|_2<\|x_m\|_2$, a random walk starting in $x_m$ must hit $\partial W^{\|x_n\|_2}_{\theta_1,\theta_2}$ before hitting $A$. Thus it is enough to prove that for every $y\in W^{r}_{\theta_1,\theta_2}$  
\begin{equation}\label{eq:harmdifference}
		\lim_{R\to\infty}\max_{x_1,x_2\in \partial W^R_{\theta_1,\theta_2}}|\CH_A(x_1,y)-\CH_A(x_2,y)|=0.
\end{equation} 
As mentioned in Section \ref{sec:pre}, there is no discrete Green function approximation on the wedge which is accurate enough to allow us to follow the proof outline of H. Kesten in $\BZ^2$. Instead, we will prove the result using the following strategy 
\begin{enumerate}
	\item Show that the number of steps needed for a random walk, starting from $\partial W^R_{\theta_1,\theta_2}$, to reach $A$ is asymptotically bigger than $R^2$. 
	\item Show that the mixing time for a random walk started from $x\in \partial W^R_{\theta_1,\theta_2}$ is much smaller than the hitting time of $A$ and therefore, using a coupling argument, that two random walks starting from $x$ and $x'$ in $\partial W^R_{\theta_1,\theta_2}$ respectively, will coincide with high probability before hitting $A$. 
\end{enumerate}
Note that carrying out the strategy above requires careful choices of parameters in the proof. This is the content of the following subsections. 

\subsection{Coupling, two key propositions and the proof of Theorem \ref{theorem:harm}}

For $R>0$ and $x_1,x_2\in \partial W^R_{\theta_1,\theta_2}$, denote by $\tbP^{x_1,x_2}_R$ a coupling of two continuous time Markov processes $(B^R_1(t),B^R_2(t))_{t\geq 0}$ each with state space $(W_{\theta_1,\theta_2}/R)$ defined as follows:
\begin{enumerate}[(a)]
	\item $B^R_1(t)$ is a continuous time, simple random walk on $W_{\theta_1,\theta_2}/R$, starting at $x_1/R$, with fixed jump rate $2R^2$.
\item $B^R_2(t)$ is a continuous time, simple random walk on $W_{\theta_1,\theta_2}/R$, starting at $x_2/R$, with fixed jump rate $2R^2$.
\item $(B^R_1(t))_{t\geq 0}$ and $(B^R_2(t))_{t\geq 0}$ are coupled according to the maximum coupling, see \cite[Appendix A.4.2]{lawler2010random}.
\end{enumerate}

For $i\in \{1,2\}$, we define $(S^R_i(n))_{n\geq 1}$ to be the embedded, discrete time, simple random walk in $(B^R_i(t))_{t\geq 0}$, and for $s\geq 0$, denote by $N^R_i(s)$, the number of jumps made by the Markov process $(B_i^R(t))_{t\geq 0}$ up to time $s$. It follows from the definitions above that $B^R_i(s)=S^R_i(N^R_i(s))$ for $i\in \{1,2\}$, $R>0$ and $s\geq 0$. 

Denoting by $\tau^i_A=\inf\{t\geq 0 ~:~ S_i^R(t)\in A/R\}$ the hitting time of $(S_i^R(t))_{t\geq 0}$ in $A/R$, it follows from the definition of $\CH_A(\cdot,\cdot)$ that for every $y\in W^r_{\theta_1,\theta_2}$
\[
	\CH_A(X_i,y)
	=\mathbf{P}^{x_i}\big(S_i(\tau_A)=y\big)	
	=\mathbf{P}^{x_1,x_2}_R\big(B^R_i(\tau^{i}_A)=y\big),\qquad \forall i\in \{1,2\}.
\]

Define the stopping time 
\[
	\CT = \inf\{t\geq 0 ~:~ B_1(t)=B_2(t)\}\,,
\]
and note that from the definition of the coupling and the stopping time $B_1(s)=B_2(s)$ for all $s\geq \CT$. 

For $T_0>0$ and $R>0$, define the event 

\[
I_{T_0,R}=\{\tau^{1}_A>T_0, \, \tau^{2}_A>T_0, \,\CT\leq T_0\}. 
\]
Then, by the Markov property 
\[
	\{B^R_1(\tau^{1}_A)=y\}\cap I_{T_0,R}=\{B^R_2(\tau^{2}_A)=y\}\cap I_{T_0,R}\,,
\]
and therefore
\begin{equation}\label{change to mixing}
\begin{aligned}
&|\CH_A(x_1,y)-\CH_A(x_2,y)|\\
&\qquad =|\mathbf{P}^{x_1,x_2}_R\big(\{B^R_1(\tau^{1}_A)=y\}\cap I^c_{T_0,R}\big)-\mathbf{P}^{x_1,x_2}_R\big(\{B^R_2(\tau^{2}_A)=y\}\cap I^c_{T_0,R}\big)\\
&\qquad \le \mathbf{P}^{x_1,x_2}_R( I^c_{T_0,R})
\leq \mathbf{P}^{x_1,x_2}_R(\tau_A^1\leq T_0)+ \mathbf{P}^{x_1,x_2}_R(\tau_A^2\leq T_0) + \mathbf{P}^{x_1,x_2}_R(\CT>T_0)\\
\end{aligned}
\end{equation}

Also, note that for $i\in \{1,2\}$
\begin{equation}\label{eq:LDP}
\begin{aligned}
	\mathbf{P}^{x_1,x_2}_R(\tau_A^i\leq T_0) &\leq \mathbf{P}^{x_1,x_2}_R(N_i(T_0)\ge 4T_0R^2)+\mathbf{P}^{x_1,x_2}_R(N_i(\tau^{i}_A)\le 4T_0R^2)\\
	&\leq e^{-2(\log(4)-1)T_0R^2} + \tbP^{x_i}_{\theta_1,\theta_2}(\tau_A\leq 4T_0R^2)\,,
\end{aligned}
\end{equation}
where in the last inequality we used large deviation estimate for the random variable $N_1(T_0)\sim \mathrm{Pois}(2T_0R^2)$.

Combining all of the above, we conclude that for every $T_0>0$, $R>0$, $x_1,x_2\in W^R_{\theta_1,\theta_2}$ and $y\in W^r_{\theta_1,\theta_2}$
\[
\begin{aligned}
	|\CH_A(x_1,y)-\CH_A(x_2,y)|&\leq 2\max_{x\in W^R_{\theta_1,\theta_2}}\tbP^{x}_{\theta_1,\theta_2}(\tau_A\leq 4T_0R^2)+\mathbf{P}^{x_1,x_2}_R(\CT>T_0)\\&+2e^{-2(\log(4)-1)T_0R^2}.
\end{aligned}
\]

Consequently, the proof of Theorem \ref{theorem:harm} is an immediate consequence of the following two propositions:

\begin{Proposition}\label{proposition hit A}
	For every finite set $A\in W_{\theta_1,\theta_2}$ and every $T\in (0,\infty)$ 
\[
	\lim_{R\to\infty}\max_{x\in \partial W^R_{\theta_1,\theta_2}}\mathbf{P}^x_{\theta_1, \theta_2}(\tau_A\le TR^2)=0\,.
\]
\end{Proposition}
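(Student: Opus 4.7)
My proof plan rests on diffusive rescaling. Consider the rescaled process $\widetilde{S}^R(t) := S_{\lfloor tR^2\rfloor}/R$, which lives on $W_{\theta_1,\theta_2}/R$, starts at a point of unit Euclidean norm (up to $o(1)$), and whose target $A$ shrinks to the corner $0$ of the rescaled wedge. By the invariance principle of Lemma \ref{lemma weak converge}, $\widetilde{S}^R$ should converge to reflected Brownian motion on the continuous wedge $\CW_{\theta_1,\theta_2}$ started from a point on the unit arc. I would conclude using the fact that, for a wedge of angle $\varphi = \theta_2-\theta_1 < \pi$, reflected Brownian motion cannot hit its corner in finite time: the conformal map $z\mapsto z^{\pi/\varphi}$ sends the wedge, after a rotation, onto the upper half plane, intertwines reflected Brownian motion with a time-changed planar Brownian motion reflected across $\BR$, and carries the corner to a single boundary point, which is polar for planar Brownian motion.

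To carry this out, fix $r$ such that $A\subset W^r_{\theta_1,\theta_2}$, and fix parameters $K>1$ and $\eta\in(0,1)$ with $\eta R>r$. I would decompose
\[
\mathbf{P}^x_{\theta_1,\theta_2}(\tau_A\le TR^2)
\le \mathbf{P}^x_{\theta_1,\theta_2}\bigl(\tau_{\partial W^{KR}_{\theta_1,\theta_2}}\le TR^2\bigr)
+ \mathbf{P}^x_{\theta_1,\theta_2}\bigl(\tau_{\partial W^{\eta R}_{\theta_1,\theta_2}}\le TR^2\wedge \tau_{\partial W^{KR}_{\theta_1,\theta_2}}\bigr),
\]
using that to hit $A$ the walk must first reach $\partial W^{\eta R}_{\theta_1,\theta_2}$. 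The first term is controlled uniformly in $R$ and $x\in\partial W^R_{\theta_1,\theta_2}$ by a Doob-type maximal inequality applied to $\|S_n\|^2$ (which is a submartingale under reflection at the wedge sides, up to a bounded correction), yielding a quantity $\varepsilon(K,T)$ with $\varepsilon(K,T)\to 0$ as $K\to\infty$ uniformly in $R$. For the second term, after rescaling by $R$, the walk lives on $W_{\theta_1,\theta_2}/R$ inside the annular domain $\{\eta\le|z|\le K\}\cap\CW_{\theta_1,\theta_2}$, and Lemma \ref{lemma weak converge} (with a mild smoothing of the corners of the domain) yields weak convergence to reflected Brownian motion on this truncated wedge. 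Since the inner and outer boundary arcs are compact sets with nonempty interior in the limit geometry, the discrete hitting probability of the inner arc before the outer arc converges as $R\to\infty$ to the Brownian counterpart. Letting then $\eta\to 0$, the continuous probability vanishes by the polarity argument above: under $z\mapsto z^{\pi/\varphi}$ this becomes the probability that planar reflected Brownian motion starting at a point of order one hits the ball $B(0,\eta^{\pi/\varphi})$ within a deterministic multiple of $T$, which tends to $0$ as $\eta\to 0$.

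The main obstacle I anticipate is the application of Lemma \ref{lemma weak converge}, which is stated for domains with analytic boundary, to an annular truncation of the wedge whose boundary has four corners where the radial sides meet the arcs $|z|=\eta$ and $|z|=K$. I plan to handle this by approximating the truncated wedge from inside and outside by smooth domains whose boundaries lie within a uniform distance $o(1)$ of the true one, and comparing the associated discrete hitting probabilities; reflected Brownian motion on the smooth approximations avoids the (slightly shifted) corner set almost surely, so the comparison is sharp in the $R\to\infty$ limit. A secondary point is to ensure uniformity of the weak-convergence statement over starting points $x\in\partial W^R_{\theta_1,\theta_2}$, which follows from a compactness/subsequence argument on the unit arc of $\CW_{\theta_1,\theta_2}$, in the same spirit as the proof of Lemma \ref{lemma connect}.
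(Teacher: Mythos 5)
Your overall route is the same as the paper's: decompose the event $\{\tau_A\le TR^2\}$ into ``the walk escapes to radius $KR$ within time $TR^2$'' plus ``the walk descends to radius $\eta R$ before reaching radius $KR$'', control the second term by passing to reflected Brownian motion via the invariance principle and the logarithmic annulus estimate (equivalently, polarity of the corner under $z\mapsto z^{\pi/\varphi}$), and make the comparison uniform over starting points by a compactness argument as in Lemma \ref{lemma connect}. This is precisely the paper's decomposition (with $\eta=1/C$, $K=C^\ep$) and its Lemmas \ref{lemma_compare_out} and \ref{lemma_continuous_to_discrete}; your treatment of the second term, including the corner-smoothing caveat for Lemma \ref{lemma weak converge}, is sound.

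There is, however, a genuine gap in your treatment of the first term. You claim that $\|S_n\|_2^2$ is a submartingale ``up to a bounded correction'' under reflection at the wedge sides, and propose a Doob maximal inequality. This is false: at a boundary vertex $x$ the one-step drift of $\|S_n\|_2^2$ equals $\frac{1}{\deg(x)}\bigl(4-\sum(\|y\|_2^2-\|x\|_2^2)\bigr)$, the sum running over the excluded neighbours $y=x\pm e_i$, and each excluded term is $\pm2x_i+1$, of magnitude comparable to $\|x\|_2$. Depending on the orientation of the boundary ray the drift at radius $\rho$ can be of order $+\rho$ (for instance on the lower boundary of $W_{\pi/4,\pi/2}$, where the excluded neighbour is $x-e_2$ with $x_2\sim\rho/\sqrt2$). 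So the correction is unbounded, and a naive Doob bound on $\ev[\|S_{TR^2}\|_2^2]$ does not close without an additional estimate on the boundary local time (the walk visits the boundary only $O(TR)$ times out of $TR^2$ steps, which is why the conclusion is still true, but that is exactly the nontrivial point). The statement you need --- that $\max_{x\in\partial W^R_{\theta_1,\theta_2}}\mathbf{P}^x_{\theta_1,\theta_2}(\tau_{\partial W^{KR}_{\theta_1,\theta_2}}\le TR^2)$ vanishes as $K\to\infty$ uniformly in large $R$ --- is true, and the paper proves it by the same invariance-principle machinery you already invoke for the second term: one transfers the question to the reflected Brownian motion and bounds the continuous escape probability by the one-dimensional reflection principle (Lemmas \ref{lemma_time_out_0} and \ref{lemma_time_out}, then \eqref{continuous_to_discrete_3}). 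Replacing your Doob argument by that route repairs the proof. A minor further imprecision: the time change induced by $z\mapsto z^{\pi/\varphi}$ is random, not deterministic, so your final polarity step should be phrased (as you in effect set it up) for the hitting probability of the small ball before the outer boundary, with the time constraint already discarded.
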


\begin{Proposition}\label{proposition_2}
	For every finite set $A\in W_{\theta_1,\theta_2}$
\[
	\lim_{T_0\to\infty}\lim_{R\to\infty}\max_{x_1,x_2\in \partial W^R_{\theta_1,\theta_2}}\mathbf{P}^{x_1,x_2}_R(\CT>T_0)=0\,.
\]
\end{Proposition}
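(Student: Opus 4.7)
The plan is to reduce the coupling-time bound to a bound on the total variation distance between the time-$T_0$ marginals of the rescaled walks, and then use Brownian scaling to make the two starting points negligible on the diffusive scale $\sqrt{T_0}$. The defining property of the maximum coupling in \cite[Appendix A.4.2]{lawler2010random} makes the coupled processes sticky and ensures
\[
	\mathbf{P}^{x_1,x_2}_R(\CT > T_0) \le \big\|P^R_{x_1}(T_0) - P^R_{x_2}(T_0)\big\|_{\mathrm{TV}}\,,
\]
where $P^R_x(t)$ denotes the law of $B^R(t)$ started from $x/R$. Hence it is enough to show that the right-hand side vanishes in the iterated limit.

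For each fixed $T_0$, I would apply an invariance principle to compare the rescaled process $(B^R_i(t))_{t\in [0,T_0]}$ with reflected Brownian motion in the continuous wedge $\CW_{\theta_1,\theta_2}$. Since $x_i/R$ lies on the unit arc $\{z\in \CW_{\theta_1,\theta_2}: \|z\|_2 = 1\}$, after passing to a subsequence we have $x_i/R\to y_i$ for some $y_i$ in that compact set. Upgrading weak convergence of paths to convergence in total variation of the time-$T_0$ marginals -- via a local CLT for the reflected walk, or via Scheff\'e's lemma applied to pointwise convergence of rescaled transition densities -- yields
\[
	\limsup_{R\to\infty}\big\|P^R_{x_1}(T_0) - P^R_{x_2}(T_0)\big\|_{\mathrm{TV}} \le \big\|Q^{y_1}_{T_0} - Q^{y_2}_{T_0}\big\|_{\mathrm{TV}}\,,
\]
where $Q^y_t$ is the law at time $t$ of reflected Brownian motion in $\CW_{\theta_1,\theta_2}$ started from $y$. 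Since $\CW_{\theta_1,\theta_2}$ is a cone and Brownian motion has the standard diffusive scaling, $Q^y_{T_0}$ is the pushforward of $Q^{y/\sqrt{T_0}}_1$ under the dilation $z\mapsto\sqrt{T_0}\,z$, and total variation is invariant under this bijection, giving
\[
	\big\|Q^{y_1}_{T_0} - Q^{y_2}_{T_0}\big\|_{\mathrm{TV}} = \big\|Q^{y_1/\sqrt{T_0}}_{1} - Q^{y_2/\sqrt{T_0}}_{1}\big\|_{\mathrm{TV}}\,.
\]
As $T_0\to\infty$ both starting points converge to the apex $0$, and the strong Feller property of reflected Brownian motion in the wedge at time $1$ (which for rational angles follows from a reflection argument and in general from standard theory of reflected diffusions) forces the right-hand side to $0$. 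Compactness of the unit arc makes this convergence uniform in $y_1, y_2$ and hence in $x_1, x_2$.

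The main technical obstacle is the total-variation upgrade of the invariance principle. Lemma \ref{lemma weak converge} is stated only for bounded smooth domains, whereas $\CW_{\theta_1,\theta_2}$ is unbounded with a corner at the origin, and weak convergence of processes does not in general imply TV convergence of time-$t$ marginals. I would handle this by first truncating to a smooth bounded subdomain of diameter $C\sqrt{T_0}$ (chosen large enough that the walk stays inside with overwhelming probability on $[0,T_0]$) and smoothing the corner at scale $1/R$, so that Lemma \ref{lemma weak converge} becomes directly applicable to the truncated process; the required TV convergence would then be deduced via Nash-type gradient estimates for the discrete heat kernel on the wedge, which provide the uniform control on rescaled transition probabilities needed to invoke Scheff\'e's lemma. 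A secondary subtlety is verifying that Lawler's maximum coupling construction indeed produces a Markovian, sticky coupling for which the bound in the first display above holds in continuous time; this is essentially classical but should be justified carefully.
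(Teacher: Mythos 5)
Your opening reduction is the same as the paper's: the maximal (sticky, Markovian) coupling gives $\mathbf{P}^{x_1,x_2}_R(\CT>T_0)\le d_{\mathrm{TV}}(B^R_1(T_0),B^R_2(T_0))$, and you truncate the wedge at the diffusive scale $\sqrt{T_0}$ (the paper uses $M_0=T_0^{1/2+\ep}$ and controls the escape probability with the hitting-time estimates of Lemma \ref{lemma_continuous_to_discrete}, getting an $O(C_\ep/T_0)$ error). After that the routes diverge. You pass to the continuum: invariance principle upgraded to total-variation convergence of the time-$T_0$ marginals, then Brownian scaling in the cone to push both starting points to the apex, then the strong Feller property of reflected Brownian motion at time $1$. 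The paper never takes a continuum limit at this stage: after truncating it rescales space by $M_0$ so that the domain becomes a \emph{fixed} bounded Lipschitz domain $\CD=\CW_{\theta_1,\theta_2}\cap\{\|y\|_2<1\}$ discretized at mesh $(M_0R)^{-1}$, the time becomes $\widehat{T}_0=T_0^{-2\ep}$, and the two starting points are at distance $O(T_0^{-1/2-\ep})$; it then applies a H\"older-in-space heat-kernel estimate for continuous-time walks on lattice approximations of Lipschitz domains (\cite{Fan_2014}, (2.8) and Theorem 2.11), which bounds each term of the TV sum by $C'(M_0R)^{-2}T_0^{-2\ep}$ and hence the whole sum by $C'T_0^{-2\ep}$.

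The genuine gap in your plan is exactly the step you flag: upgrading weak convergence of paths to total-variation convergence of the fixed-time marginals. For a reflected walk in an unbounded cone with a corner this is not a routine consequence of Lemma \ref{lemma weak converge}, and it is the entire difficulty of the proposition. Your proposed remedy --- ``Nash-type gradient estimates for the discrete heat kernel on the wedge, uniform in $R$'' --- is not a patch but the whole proof: once you have such a uniform spatial regularity estimate for the discrete transition kernel, you can compare $P^R_{x_1}(T_0)$ and $P^R_{x_2}(T_0)$ directly (as the paper does via \cite{Fan_2014}) and the entire continuum detour (local CLT, Scheff\'e, strong Feller, corner-smoothing) becomes unnecessary. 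Moreover the corner-smoothing at scale $1/R$ would itself need to be shown not to affect the time-$T_0$ law in total variation, which is an additional unaddressed perturbation argument. So the plan is salvageable, but only by importing the discrete heat-kernel regularity input, at which point it collapses into the paper's argument; as written, the central step is asserted rather than proved.
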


\begin{proof}[Proof of Theorem \ref{theorem:harm}]
	Let $A\subset W_{\theta_1,\theta_2}$ be a finite set. The discussion above combined with Proposition \ref{proposition hit A} and Proposition \ref{proposition_2} implies that \eqref{eq:harmdifference} holds. As a result, the limit $\lim_{\|x\|_2\to\infty}\CH_A(x,y)$ exists for every $y\in W_{\theta_1,\theta_2}$ and thus the existence of the Harmonic measure follows. 
\end{proof}

\subsection{Lower bound on the hitting time - Proof of Proposition \ref{proposition hit A}}
 
We start with some results for the continuous analogue of reflected Brownian motion in the continuous wedge $\CW_{\theta_1, \theta_2}$, whose law when starting in $u\in\CW_{\theta_1,\theta_2}$ we denote by $\widehat{\tbP}^u_{\theta_1,\theta_2}$. For $L>0$ denote by $\si_L$ the hitting time of the reflected Brownian motion in $\partial \CW_{\theta_1,\theta_2}^L$.

\begin{lemma}\label{lemma_compare_out}
	For every $\ep>0$, $C>1$ and $u\in \CW_{\theta_1, \theta_2}$ such that $\|u\|_2=1$
\begin{equation}\label{compare_out}
	\widehat{\mathbf{P}}^u_{\theta_1, \theta_2}(\si_{C^{-1}}<\si_{C^\ep})=\frac{\ep}{1+\ep}\,.
\end{equation}
\end{lemma}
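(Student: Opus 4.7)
The plan is to exploit the well-known fact that the radial coordinate of a reflected Brownian motion in a cone/wedge evolves like the radial part of planar Brownian motion, because the reflection at each boundary ray is in the tangential (angular) direction and therefore does not perturb $r := \|B(t)\|_2$. In polar coordinates, the inward unit normal at the ray $\theta=\theta_i$ is orthogonal to the radial vector field, so the Skorokhod boundary-pushing term in the semimartingale decomposition of $B(t)$ contributes zero to $dr(t)$. Equivalently, $\log r$ is a harmonic function on $\CW_{\theta_1,\theta_2}$ (it is independent of $\theta$) whose normal derivative vanishes on $\partial\CW_{\theta_1,\theta_2}\setminus\{0\}$, so it satisfies the Neumann condition that characterizes the domain of the generator of reflected Brownian motion.

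Concretely, I would apply Itô's formula to $\log r(B(t))$ for $t$ before the process visits the origin: the boundary local-time terms cancel by the orthogonality observation above, and the radial drift coming from $\tfrac12\Delta\log r=0$ vanishes, so
\[
M_t := \log r\bigl(B(t\wedge\sigma_{C^{-1}}\wedge\sigma_{C^\varepsilon})\bigr)
\]
is a continuous local martingale. It is uniformly bounded, since on $\{t<\sigma_{C^{-1}}\wedge\sigma_{C^\varepsilon}\}$ we have $r(B(t))\in[C^{-1},C^\varepsilon]$, whence $|M_t|\le \max(1,\varepsilon)\log C$. A standard Bessel-process argument (the radial law is that of a two-dimensional Bessel process, which is neighborhood recurrent but oscillates between $0$ and $\infty$) shows that $\sigma_{C^{-1}}\wedge\sigma_{C^\varepsilon}$ is almost surely finite.

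Setting $p:=\widehat{\mathbf{P}}^u_{\theta_1,\theta_2}(\sigma_{C^{-1}}<\sigma_{C^\varepsilon})$ and applying the optional stopping theorem to the bounded martingale $M$ at the bounded-in-law stopping time $\sigma_{C^{-1}}\wedge\sigma_{C^\varepsilon}$, using $r(u)=\|u\|_2=1$, gives
\[
0 \;=\; \log 1 \;=\; \widehat{\mathbf{E}}^u_{\theta_1,\theta_2}\!\bigl[\log r(B(\sigma_{C^{-1}}\wedge\sigma_{C^\varepsilon}))\bigr]
\;=\; p\,(-\log C)+(1-p)\,\varepsilon\log C.
\]
Solving $-p+(1-p)\varepsilon=0$ yields $p=\varepsilon/(1+\varepsilon)$, which is the claim.

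The main technical point to watch is the justification that $\log r(B(t))$ really is a local martingale on the reflected Brownian motion: one must check that the boundary local-time contribution in Itô's formula is zero. This follows either from the Neumann characterization of the domain of the generator (since $\log r$ depends only on $r$, its angular normal derivative vanishes) or, more directly, from the Skorokhod decomposition $dB(t)=dW(t)+n(B(t))\,dL(t)$, combined with the fact that $\nabla\log r=r^{-2}(x_1,x_2)$ is radial while the inward normal $n$ on $\partial\CW_{\theta_1,\theta_2}$ is purely angular, so that $\nabla\log r\cdot n=0$ on the boundary. Once this is in hand, the rest is an elementary optional-stopping computation.
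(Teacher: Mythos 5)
Your proof is correct, and it takes a genuinely different route from the paper's. The paper proves \eqref{compare_out} by conformal invariance: it maps the wedge onto a slit plane via $z\mapsto z^{2\pi/(\theta_2-\theta_1)}$, removes the reflection by the reflection principle, and then applies optional stopping to the martingale $\log|B(t)|$ for a \emph{free} planar Brownian motion; the exponents $2\pi/(\theta_2-\theta_1)$ introduced by the map cancel in the final linear equation, yielding the same answer $\ep/(1+\ep)$. You instead work directly in the wedge and observe that, since the inward normal on each boundary ray is orthogonal to the radial direction, the boundary local-time term in the Skorokhod/It\^o decomposition does not feed into $\log r(B(t))$, so $\log r$ of the \emph{reflected} process is itself a bounded local martingale on $\{C^{-1}\le r\le C^{\ep}\}$; optional stopping then finishes the computation. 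Your key observation is easily made rigorous (by the same token $r(B(t))^2$ satisfies $d(r^2)=2B\cdot dW+2\,dt$ because $B\cdot n=0$ on the rays, so the radial part is literally a two-dimensional Bessel process, which also gives the a.s. finiteness of $\sigma_{C^{-1}}\wedge\sigma_{C^{\ep}}$ that the paper leaves implicit). Your argument is more elementary and avoids tracking the time change of the conformal map, which the paper only needs later (in Lemma \ref{lemma_time_out_0}); the paper's conformal approach, on the other hand, is the one that extends to the angularly dependent exit probabilities such as Lemma \ref{lem:cont_prob}, so it is the natural companion to the rest of the section. The only point to be careful about, which you correctly flag, is that the identity $\nabla\log r\cdot n=0$ fails to make sense at the vertex of the wedge; since the stopped process is confined to $C^{-1}\le r\le C^{\ep}$, this causes no difficulty.
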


\begin{proof}
	The proof follows from the fact that $\log|x|$ is the Green function in $\BR^2$ and therefore, if $(B(t))_{t\geq 0}$ is a standard two-dimensional Brownian motion, then $(\log|B(t)|)_{t\geq 0}$ is a martingale. 
	
			Indeed, note that the reflected Brownian motion in a smooth region is conformally invariant up to a time change, c.f. Theorem 9.3 of \cite{fields_medal_paper}. By the conformal mapping theorem we can map the reflected we can map the wedge into $\BC\setminus [0,\infty)$, which transforms the Brownian motion in $\CW_{\theta_1, \theta_2}$ to a Brownian motion in $\BC^2$ reflected on the line $\{(x,0) ~:~ x\in\BR\}$. Note that the original event $\{\si_{C^{-1}}<\sigma_{C^\varepsilon}\}$ is mapped under this transformation to the event $\{\si_{C^{-2\pi/(\theta_2-\theta_1)}}<\si_{C^{2\pi\ep/(\theta_2-\theta_1)}}\}$. Next, observe that by the reflection principle, the reflection on the line $\{(x,0) ~:~ x\in\BR\}$ does not change the probability of the event $\{\si_{C^{-2\pi/(\theta_2-\theta_1)}}<\si_{C^{2\pi\ep/(\theta_2-\theta_1)}}\}$. 
	
	Due to the fact that $(\log(|B(t)|))_{t\geq 0}$ is a martingale, where $(B(t))_{t\geq 0}$ is a standard two-dimensional Brownian motion. It follows from the optional stopping theorem for the stopping time $\si:=\min\{\si_{C^{-2\pi/(\theta_2-\theta_1)}},\,\si_{C^{2\pi\ep/(\theta_2-\theta_1)}}\}$ that 
\[
	-\frac{2\pi}{\theta_2-\theta_1}\widehat{\mathbf{P}}^u_{\theta_1, \theta_2}(\si_{C^{-1}}<\si_{C^\ep})\log(C)+\frac{2\pi\ep}{\theta_2-\theta_1}(1-\widehat{\mathbf{P}}^u_{\theta_1, \theta_2}(\si_{C^{-1}}<\si_{C^\ep}))\log C=0
\]
which proves the result.
\end{proof}

\begin{lemma}\label{lemma_time_out_0}
	For every $\ep>0$, there is a constant $C_\ep\in (0,\infty)$ such that for all $u\in \CW_{\theta_1,\theta_2}$ satisfying $\|u\|_2=1$ and all $T>0$ 
\[
	\widehat{\mathbf{P}}^u_{\theta_1, \theta_2}(\si_{T^{1/2+\ep}}\le T)\le \frac{C_\ep}{T}. 
\]
\end{lemma}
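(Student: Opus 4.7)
The plan is to choose a suitable even moment of $|B(t)|$ and combine a moment bound coming from Itô's formula with Doob's maximal inequality. The decisive observation is that for any integer $k \ge 1$, the function $f(z) = |z|^{2k}$ depends only on $|z|$, so its angular derivative vanishes identically on both bounding rays of $\CW_{\theta_1,\theta_2}$, and hence so does its normal derivative. When Itô's formula is applied to the reflected Brownian motion $B(t)$, the boundary local-time contribution therefore disappears and we are left with the semimartingale decomposition
\[
	|B(t)|^{2k} = 1 + \int_0^t \nabla f(B(s)) \cdot dW(s) + \frac{1}{2}\int_0^t \Delta f(B(s))\,ds,
\]
where $W$ is a standard planar Brownian motion. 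Using $\Delta|z|^{2k} = 4k^2|z|^{2k-2}$ in two dimensions, this shows that $(|B(t)|^{2k})_{t \ge 0}$ is a non-negative submartingale.

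Taking expectations and iterating the resulting recursion
\[
	\widehat{\mathbf{E}}^u_{\theta_1,\theta_2}\big[|B(t)|^{2k}\big] = 1 + 2k^2\int_0^t \widehat{\mathbf{E}}^u_{\theta_1,\theta_2}\big[|B(s)|^{2k-2}\big]\,ds
\]
yields, by induction on $k$ starting from $\widehat{\mathbf{E}}^u_{\theta_1,\theta_2}[|B(t)|^0]=1$, a constant $C_k \in (0,\infty)$ such that
\[
	\widehat{\mathbf{E}}^u_{\theta_1,\theta_2}\big[|B(T)|^{2k}\big] \le C_k(1+T^k)\qquad \text{for all } T>0.
\]
Applying Doob's $L^k$ maximal inequality to the submartingale $(|B(t)|^2)_{t \ge 0}$ (the $k=1$ instance above), we upgrade this to a maximal moment bound
\[
	\widehat{\mathbf{E}}^u_{\theta_1,\theta_2}\Big[\sup_{s\in[0,T]}|B(s)|^{2k}\Big] \le \Big(\tfrac{k}{k-1}\Big)^{k} \widehat{\mathbf{E}}^u_{\theta_1,\theta_2}\big[|B(T)|^{2k}\big] \le C'_k(1+T^k).
\]

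Combining the above with Markov's inequality now gives
\[
	\widehat{\mathbf{P}}^u_{\theta_1,\theta_2}\big(\si_{T^{1/2+\ep}}\le T\big) = \widehat{\mathbf{P}}^u_{\theta_1,\theta_2}\Big(\sup_{s\in[0,T]}|B(s)| \ge T^{1/2+\ep}\Big) \le \frac{C'_k(1+T^k)}{T^{(1+2\ep)k}}.
\]
Choosing the integer $k = k(\ep) \ge \max\{2,\lceil 1/(2\ep)\rceil\}$ makes the surplus exponent $2\ep k \ge 1$, so the right-hand side is bounded by $2C'_k/T$ for all $T \ge 1$; for $T \le 1$ the inequality is trivial after enlarging $C_\ep$.

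The only mildly delicate step is justifying Itô's formula for reflected Brownian motion in the wedge at the corner at the origin, but since $f(z) = |z|^{2k}$ is smooth in all of $\BR^2$ and radially symmetric, the vanishing normal-derivative property persists also at the corner, so no local-time contribution enters there either; no new argument beyond standard reflected-BM theory is needed.
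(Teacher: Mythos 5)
Your argument is correct, and it takes a genuinely different route from the paper. The paper's proof pushes the event through the conformal map $f(x)=x^{\varphi}$ onto the reflected Brownian motion in a half plane and then has to control two things separately: a Gaussian tail bound for the transformed motion escaping to radius $T^{(1/2+\ep)/\varphi}$, and a lower bound on the It\^o time change $\zeta(t)=\int_0^t|f'(\widehat B(s))|^2\,ds$ showing that enough wedge-time has elapsed. You bypass all of that by exploiting one structural fact: a radial function such as $|z|^{2k}$ has vanishing normal derivative on both bounding rays, so the boundary local time drops out of It\^o's formula for the normally reflected process, $|B(t)|^{2k}$ becomes a submartingale with explicitly computable drift, and iterating the moment recursion plus Doob and Markov gives the $C_\ep/T$ tail with $k\approx 1/(2\ep)$. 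This is more elementary (no conformal invariance of reflected Brownian motion, no time-change analysis), is insensitive to the wedge angle, and immediately yields Lemma \ref{lemma_time_out} as well; the paper presumably prefers the conformal route because it already sets up that machinery for Lemma \ref{lemma_compare_out}. Two details are worth tightening in a final write-up. First, the identity $\widehat{\mathbf{E}}^u[|B(t)|^{2k}]=1+2k^2\int_0^t\widehat{\mathbf{E}}^u[|B(s)|^{2k-2}]\,ds$ presupposes that the stochastic integral is a true martingale, which a priori requires the very moments you are trying to bound; this is fixed by the standard localization (stop at the exit time of $B_n$, take expectations, and pass to the limit by monotone convergence or Gronwall). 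Second, the dismissal of the corner should be backed by the Varadhan--Williams semimartingale theory: for normal reflection in a convex wedge the process is a semimartingale and accrues no local time at the vertex, which is exactly what licenses your It\^o computation globally. Neither point is a gap in the idea, only in the bookkeeping.
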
 
%%%%%%%%%
\begin{proof}
	It suffices to prove the result for all sufficiently large $T$. Recall that the time change of the process under the conformal map is given, due to Ito's formula, by
\[
	\zeta(t)=\int_0^t |f'(\widehat{B}(s))|^2ds\,,
\]
where $f$ is the conformal map from the the upper half plane to $\CW_{\theta_1, \theta_2}$, given by 
\[
	f(x)=x^\varphi,
\]
with $\varphi:=(\theta_2-\theta_1)/\pi$, and $\widehat{B}(s)$ is the reflected Brownian motion in the upper half plane. For $T>0$, define
\[
	\widehat{\sigma}_T=\inf\{t\geq 0 ~:~ \|\widehat{B}(t)\|_2=T\}. 
\]
Then
\begin{equation}\label{time_out_0_1}
\begin{aligned}
	&\widehat{\mathbf{P}}^u_{\theta_1, \theta_2}(\si_{T^{1/2+\ep}}\le T)\\
	&\qquad \le \mathbf{P}^{u^{1/\varphi}}(\widehat{\sigma}_{T^{(1/2+\ep)/\varphi}}\le T^{(1+\ep/2)/\varphi})+ \mathbf{P}^{u^{1/\varphi}}(\zeta( T^{(1+\ep/2)/\varphi}) \le T). 
\end{aligned}
\end{equation}

Starting from the first term on the right hand side of \eqref{time_out_0_1}, note that $\|u^{1/\varphi}\|_2=1$ for every $u=(u_1,u_2)$ such that $\|u\|_2=1$. Also, observe that a reflected Brownian motion in the upper half plane can be constructed by replacing the $y-$coordinate of a standard 2-dimensional Brownian motion by its absolute value. Since, $\|\widehat{B}(t)\|_2 \geq T$ implies that one of the coordinates of $\widehat{B}(t)$ is bigger than $T/2$ it follows that 
\begin{equation}\label{time_out_0_2}
\begin{aligned}
	\mathbf{P}^{u^{1/\varphi}}&(\widehat{\sigma}_{T^{(1/2+\ep)/\varphi}}\le T^{(1+\ep/2)/\varphi})\\
&\le 2\sup_{|a|\leq 1}\mathbf{P}^{a}\Big(\max_{t\le T^{(1+\ep/2)/\varphi}}|B^1(t)|\ge T^{(1/2+\ep)/\varphi}/2\Big)\,,
%+\mathbf{P}^{u_2^{1/\varphi}}\big(\max_{t\le T^{(1+\ep/2)/\varphi}}|B^1(t)|\ge T^{(1/2+\ep)/\varphi}/2 \big).
\end{aligned}
\end{equation}
where $B^1(t)$ is a one-dimensional Brownian motion. By reflection principle for one dimensional Brownian motion 
\[
\begin{aligned}
	&\mathbf{P}^{a}\Big(\max_{t\le T^{(1+\ep/2)/\varphi}}|B^1(t)|\ge T^{(1/2+\ep)/\varphi}/2 \Big)\\
	&\qquad\le 4 \mathbf{P}^{a}(B^1(T^{(1+\ep/2)/\varphi})\ge T^{(1/2+\ep)/\varphi}/2)\le 4\exp(-T^{\ep/2})<\frac{1}{2T}
\end{aligned}
\]
for all $|a|\leq 1$ and all $T>0$ sufficiently large. Thus it remains to control the second term on the right hand side of \eqref{time_out_0_1}, and show that for every $\ep>0$ there exists $C_\ep\in (0,\infty)$ such that 
\begin{equation}\label{time_out_0_4}
	\mathbf{P}^{u^{1/\varphi}}\left(\zeta( T^{(1+\ep/2)/\varphi}) \le T\right)\le \frac{C_\ep}{T}. 
\end{equation}
Recalling that $f(x)=x^\varphi$, we have $|f'(\widehat{B}(s))|^2=\varphi^2 |\widehat{B}(s)|^{2(\varphi-1)}$.
Define $\delta_\ep=\varphi\ep/(3+\ep)$. For any $n\in\BN$, using similar argument as in \eqref{time_out_0_2}, we have 
\begin{equation}\label{time_out_0_5}
\begin{aligned}
	& \mathbf{P}^{u^{1/\varphi}}\Big(\max_{t\le n}\|B(t)\|_2\ge n^{(1+\delta_\ep)/2} \Big)\\
&\qquad \le 2\max_{|a|\leq 1}\mathbf{P}^{a}\Big(\max_{t\le n}|B^1(t)|\ge  n^{(1+\delta_\ep)/2}/2 \Big)
\le 8 \exp(-n^{\delta_\ep/4})
\end{aligned}
\end{equation}
for all sufficiently large $n$. Consequently
\begin{equation}\label{time_out_0_6}
	\sum_{n=[ T^{(1+\ep/2)/\varphi}/2]}^\infty \mathbf{P}^{u^{1/\varphi}}\Big(\max_{t\le n}\|B(t)\|_2\ge n^{(1+\delta_\ep)/2}\Big)\le \frac{C_\ep}{T}.
\end{equation}
Thus it suffices to show that $\zeta(T^{1+\ep/2})>T$ on the event
\[
	A=\bigcap_{n=[ T^{(1+\ep/2)/\varphi}/2]}^\infty \Big\{ \max_{t\le n}\|B(t)\|_2< n^{(1+\delta_\ep)/2} \Big\}\,.
\]
This indeed holds since for all $n\ge [ T^{(1+\ep/2)/\varphi}/2]-1$
\[
	\int_{n-1}^{n} \varphi^2 |\hat B(s)|^{2(\varphi-1)} ds>  \varphi^2 n^{(1+\delta_\ep)(\varphi-1)} \ge  \varphi^2 n^{-1+\varphi-\delta_\ep}= \varphi^2 n^{-1+\varphi/(1+\ep/3)}
\]
and therefore
\begin{equation}\label{time_out_0_5}
\begin{aligned}
	\zeta(T^{(1+\ep/2)/\varphi})&\ge \varphi^2 \int_{\frac{T^{(1+\ep/2)/\varphi}}{2}}^{T^{(1+\ep/2)/\varphi}}|\hat B(s)|^{2(\varphi-1)} ds
&\ge \varphi^2 \sum_{n=[ \frac{T^{(1+\ep/2)/\varphi}}{2}]}^{[T^{(1+\ep/2)/\varphi}]-1} n^{-1+\varphi/(1+\ep/3)}\\
&\ge c T^{(1+\ep/2)/(1+\ep/3)}>T.
\end{aligned}
\end{equation}
\end{proof}
%%%%%%%%
With  Lemma \ref{lemma_time_out_0} at hand, by choosing $C=T^{1+\frac{1}{2\ep}}$ one obtains: 
\begin{lemma}\label{lemma_time_out}
	For every $\ep>0$, every $T\in (0,\infty)$ and any $u\in \CW_{\theta_1, \theta_2}$ satisfying $\|u\|_2=1$,
\begin{equation}
	\lim_{C\to \infty}\widehat{\mathbf{P}}^u_{\theta_1, \theta_2}(\si_{C^\ep}\le T)=0\,. 
\end{equation}
\end{lemma}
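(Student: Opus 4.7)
The plan is to deduce this as a direct corollary of Lemma \ref{lemma_time_out_0} via a scaling substitution. The target lemma fixes $T$ and $\ep$ and sends $C\to\infty$, whereas Lemma \ref{lemma_time_out_0} gives a bound $C_{\ep'}/T'$ that is small when $T'$ is large. The strategy is to match the radius $C^\ep$ in the target to the radius $T'^{1/2+\ep'}$ in the hypothesis of Lemma \ref{lemma_time_out_0} by choosing $T'$ (and, if desired, $\ep'$) in terms of $C$.

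Concretely, I would set $\ep' := \ep$ in Lemma \ref{lemma_time_out_0} and then choose $T' := C^{\ep/(1/2+\ep)}$, which yields $T'^{1/2+\ep} = C^\ep$. With this choice, the requirement $T' \geq T$ is equivalent to $C \geq T^{(1/2+\ep)/\ep} = T^{1+1/(2\ep)}$, which is precisely the threshold condition motivating the hint preceding the lemma statement. For all $C$ past this threshold, monotonicity of the hitting time $\si_R$ in $R$ (equivalently, the obvious inclusion $\{\si_{C^\ep}\leq T\} \subseteq \{\si_{C^\ep}\leq T'\}$ whenever $T'\geq T$) gives
\[
\widehat{\mathbf{P}}^u_{\theta_1,\theta_2}(\si_{C^\ep}\leq T)
\leq \widehat{\mathbf{P}}^u_{\theta_1,\theta_2}(\si_{T'^{1/2+\ep}}\leq T')
\leq \frac{C_\ep}{T'} = \frac{C_\ep}{C^{\ep/(1/2+\ep)}},
\]
where the last inequality is Lemma \ref{lemma_time_out_0} applied with parameters $\ep'=\ep$ and $T'$ as above.

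Since $\ep/(1/2+\ep)>0$ and $C_\ep$ is a fixed constant, the right-hand side tends to $0$ as $C\to\infty$, yielding the desired limit. I do not anticipate any technical obstacle here: the argument is essentially a substitution plus a monotonicity observation built on top of Lemma \ref{lemma_time_out_0}, which already carries the genuine analytic content (the bound on the tail of the exit time obtained there via the conformal map and the martingale $\log|\widehat{B}(t)|$).
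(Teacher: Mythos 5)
Your proof is correct and is exactly the argument the paper intends: the paper's entire justification is the remark that one should take $C=T^{1+\frac{1}{2\ep}}$, which is precisely your substitution $T'=C^{\ep/(1/2+\ep)}$ inverted, and you correctly supply the missing details (the threshold $C\ge T^{1+1/(2\ep)}$ and the inclusion $\{\si_{C^\ep}\le T\}\subseteq\{\si_{C^\ep}\le T'\}$).
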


Next, using the invariance principle and an argument similar to the one in the proof of Lemma \ref{lemma connect}, we prove analogue results to the ones in Lemmas \ref{lemma_compare_out} -  \ref{lemma_time_out}, for simple random walk in $W_{\theta_1,\theta_2}$. 

For $\ep>0$, $R\geq 1$ and $C>0$, let 
\[
(S_n^{R,C,\varepsilon})_{n\geq 0}=\bigg(S_{n\wedge \tau_{\partial W^{2C^\ep R}_{\theta_1,\theta_2}}\wedge \tau_{\partial W^{R/2C}_{\theta_1,\theta_2}}}\bigg)_{n\ge 0}
\]
be a simple random walks in $W_{\theta_1,\theta_2}$, starting from $x_R\in \partial W^R_{\theta_1,\theta_2}$, stopped at the first hitting time of $\partial W^{R/2C}_{\theta_1,\theta_2}$ or $\partial W^{2C^\ep R}_{\theta_1,\theta_2}$. Due to the invariance principle, for every sequence of points $(x_R)_{R\geq 1}$ for which the limit $x_\infty:=\lim_{R\to\infty} x_R/R$ exists, the linear interpolation of $(S_n^{R,C,\varepsilon})_{n\geq 0}$ converges weakly to reflected Brownian motion in the wedge, starting from $x_\infty\in \CW_{\theta_1,\theta_2}$ (satisfying $\|x_\infty\|_2=1$) until the stopping time $\si$. 

\begin{lemma}\label{lemma_continuous_to_discrete}
	For every $\ep>0$, $T\in (0,\infty)$ and $C>1$, there exists $C_\varepsilon\in (0,\infty)$ such that the following holds  
\begin{equation}\label{continuous_to_discrete_1}
	\lim_{R\to\infty}\max_{y\in \partial W^R_{\theta_1,\theta_2}}\mathbf{P}^y_{\theta_1, \theta_2}\big(\tau_{\partial W^{R/C}_{\theta_1,\theta_2}}<\tau_{\partial W^{R C^\ep}_{\theta_1,\theta_2}}\big)\leq \ep,
\end{equation}

\begin{equation}\label{continuous_to_discrete_2}
	\lim_{R\to\infty}\max_{y\in \partial W^R_{\theta_1,\theta_2}}\mathbf{P}^y_{\theta_1, \theta_2}(\tau_{\partial W^{T^{1/2+\ep}R}}\le TR^2)\le \frac{C_\ep}{T},
\end{equation}
and 
\begin{equation}\label{continuous_to_discrete_3}
	\lim_{C\to\infty}\lim_{R\to\infty}\max_{y\in \partial W^R_{\theta_1,\theta_2}}\mathbf{P}^y_{\theta_1, \theta_2}\big(\tau_{\partial W^{C^\ep R}_{\theta_1,\theta_2}}\le TR^2 \big)=0.
\end{equation}
\end{lemma}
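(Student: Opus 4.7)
The three assertions are the discrete counterparts of Lemmas \ref{lemma_compare_out}, \ref{lemma_time_out_0}, and \ref{lemma_time_out} respectively. My plan is to transfer each continuous bound to the discrete setting via the invariance principle in Lemma \ref{lemma weak converge}, applied to the stopped walk $(S_n^{R,C,\ep})_{n \ge 0}$ introduced just before the statement, after diffusive rescaling by $1/R$. Under this rescaling the starting sphere $\partial W^R_{\theta_1,\theta_2}$ is identified (up to $O(1/R)$) with the unit arc $\partial \CW^1_{\theta_1,\theta_2}$, and $TR^2$ discrete steps correspond, in the scaling limit, to running reflected Brownian motion on $\CW_{\theta_1,\theta_2}$ for time $T$. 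The rescaled versions of the three events become, respectively, $\{\sigma_{1/C} < \sigma_{C^\ep}\}$, $\{\sigma_{T^{1/2+\ep}} \leq T\}$, and $\{\sigma_{C^\ep} \leq T\}$, whose continuous probabilities are already controlled by Lemmas \ref{lemma_compare_out}, \ref{lemma_time_out_0}, and \ref{lemma_time_out}.

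The only genuine difficulty is obtaining each bound uniformly in the starting point $y \in \partial W^R_{\theta_1,\theta_2}$, since Lemma \ref{lemma weak converge} supplies only pointwise convergence along fixed sequences $y_R/R \to x_\infty$. I would handle uniformity by a compactness-and-contradiction argument. If one of the three assertions failed, one could produce sequences $R_n \uparrow \infty$ and $y_n \in \partial W^{R_n}_{\theta_1,\theta_2}$ along which the discrete probability stays strictly above the claimed bound. The rescaled points $y_n/R_n$ lie in the compact arc $\overline{\CW}_{\theta_1,\theta_2} \cap \{\|x\|_2 \leq 1\}$, so after passing to a subsequence they converge to some $x_\infty$ with $\|x_\infty\|_2 = 1$. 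Applying Lemma \ref{lemma weak converge} on the bounded wedge $\CW^{M}_{\theta_1,\theta_2}$ for the appropriate cutoff $M$ (namely $M = 2C^\ep$, $M = 2T^{1/2+\ep}$, and $M = 2C^\ep$ in cases \eqref{continuous_to_discrete_1}--\eqref{continuous_to_discrete_3}) yields weak convergence of the rescaled stopped walk to reflected Brownian motion started at $x_\infty$ on that bounded wedge. The hitting-time functionals entering the three events are continuous almost surely under the Brownian law (the relevant boundary arcs are smooth and crossed transversally), so weak convergence transfers to convergence of the probabilities, contradicting the choice of $y_n$.

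The step I expect to be the main obstacle is verifying the almost-sure-continuity of the hitting-time functionals, particularly for events of the form $\{\sigma_r \leq T\}$ entering \eqref{continuous_to_discrete_2} and \eqref{continuous_to_discrete_3}, where a spatial hit is combined with a time constraint. One must rule out pathological paths where the reflected Brownian motion touches but does not cross the target sphere at time exactly $T$. This is handled by standard transversality of reflected Brownian motion on a smooth bounded region and the strong Markov property applied at the first approach of the target sphere; the spheres of radii $T^{1/2+\ep}$ and $C^\ep$ sit strictly inside the auxiliary domain $\CW^M_{\theta_1,\theta_2}$, so the reflecting part of the boundary plays no role in the continuity analysis. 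Once this is in hand, the compactness-plus-contradiction scheme above delivers all three bounds uniformly in $y$, and \eqref{continuous_to_discrete_3} may alternatively be recovered from \eqref{continuous_to_discrete_2} by choosing $C = T^{1+1/(2\ep)}$ as in the deduction of Lemma \ref{lemma_time_out} from Lemma \ref{lemma_time_out_0}.
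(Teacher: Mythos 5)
Your proposal is correct and follows essentially the same route as the paper: the paper also argues by contradiction, extracting a subsequence with $y_{n}/R_{n}$ converging to a point $x_\infty$ on the unit arc, and then invokes the invariance principle together with Lemmas \ref{lemma_compare_out}, \ref{lemma_time_out_0} and \ref{lemma_time_out} to contradict the assumed failure of the bound. Your additional attention to the almost-sure continuity of the hitting-time functionals is a point the paper leaves implicit, but it does not change the argument.
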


\begin{proof}
	As alluded above, we use a similar argument to the one in the proof of Lemma \ref{lemma connect}. Suppose \eqref{continuous_to_discrete_1} does not hold. Then there is a sequence $(R_n)_{n\geq 1}$ going to infinity and a sequence $(x_n)_{n\geq }$ such that for every $n\geq 1$,  $x_n\in  \partial W^{R_n}_{\theta_1,\theta_2}$ and 
\[
	\mathbf{P}^{x_n}_{\theta_1, \theta_2}\big(\tau_{\partial W^{R/C}_{\theta_1,\theta_2}}<\tau_{\partial W^{R C^\ep}_{\theta_1,\theta_2}}\big)> \ep
\]
Since $x_n/R_n$ is a bounded sequence in $\BR^2$, there exists a subsequence $(n_k)_{k\geq 1}$ such that $\lim_{k\to\infty}x_{n_k}/R_{n_k}=x_\infty'\in \CW_{\theta_1, \theta_2}$, satisfying $\|x_\infty'\|_2=1$. Thus by the invariance principle and Lemma Lemma \ref{lemma_compare_out}, 
\[
	\varepsilon\leq \lim_{k\to\infty} \mathbf{P}^{x_{n_k}}_{\theta_1, \theta_2}\big(\tau_{\partial W^{R_{n_k}/C}_{\theta_1,\theta_2}}<\tau_{\partial W^{R_{n_k} C^\ep}_{\theta_1,\theta_2}}\big)
	=\widehat{\mathbf{P}}^{x_\infty'}_{\theta_1, \theta_2}(\si_{C^-1}<\si_{C^\ep})=\frac{\ep}{1+\varepsilon}
\]
which contradicts the assumption. Repeating the argument with Lemma \ref{lemma_time_out_0} and Lemma \ref{lemma_time_out} replacing Lemma \ref{lemma_compare_out}, yields \eqref{continuous_to_discrete_2} and \eqref{continuous_to_discrete_3} respectively. 
\end{proof}

\begin{proof}[Proof of Proposition \ref{proposition hit A}] Since $A$ is a fixed finite set, for every $C>1$ and $R$ sufficiently large (depending only on $C$ and $A$) we have $A\subset W^{R/C}_{\theta_1,\theta_2}$. Hence, for every $x\in \partial W^R_{\theta_1,\theta_2}$ and $R$ sufficiently large 
\[
	\tau_{C^\ep R}\wedge \tau_{R/C}\leq \tau_{R/C}\leq \tau_A,\qquad \tbP^x_{\theta_1,\theta_2}-a.s
\] 
and therrefore, for every $T>0$, $C>1$, $\ep>0$ and $x\in\partial W^R_{\theta_1,\theta_2}$
\[
\begin{aligned}
\mathbf{P}^x_{\theta_1, \theta_2}(\tau_A\le TR^2)&
\le \mathbf{P}^x_{\theta_1, \theta_2}\big( \tau_{\partial W^{C^\ep R}_{\theta_1,\theta_2}}\wedge \tau_{\partial W^{R/C}_{\theta_1,\theta_2}}\le   TR^2\big)\\
&\le \mathbf{P}^x_{\theta_1, \theta_2}\big(\tau_{\partial W^{R/C}_{\theta_1,\theta_2}}<\tau_{\partial W^{C^\ep R}_{\theta_1,\theta_2}} \big)+\mathbf{P}^x_{\theta_1, \theta_2}\big(\tau_{\partial W^{C^\ep R}_{\theta_1,\theta_2}}\le TR^2 \big).
\end{aligned}
\]

Taking the maximum over $x\in \partial W^R_{\theta_1,\theta_2}$, then the limit $R\to \infty$, then the limit $C\to\infty$ and finally the limit $\ep\to 0$, the result follows from Lemma \ref{lemma_continuous_to_discrete}.
\end{proof}

\subsection{Proof of Proposition \ref{proposition_2}}

Since $\tbP^{x_1,x_2}_R$ couples the two random walks via a maximum coupling for Markov chains, it follows that 
\[
\begin{aligned}
	\mathbf{P}^{x_1,x_2}_R(\CT>T_0)&\leq d_{\mathrm{TV}}(B_1^R(T_0),B_2^R(T_0))\\
	&:=\sum_{y\in W_{\theta_1,\theta_2}/R}\big|\mathbf{P}^{x_1,x_2}_R(B^R_1(T_0)=y)-\mathbf{P}^{x_1,x_2}_R(B^R_2(T_0)=y) \big|
\end{aligned}
\]

For $M_0\in (0,\infty)$, one can split the sum over $y$ into two parts, those satisfying $\|y\|_2\geq M_0$ and the ones satisfying $\|y\|_2<M$. We turn to estimate each of the terms separately.

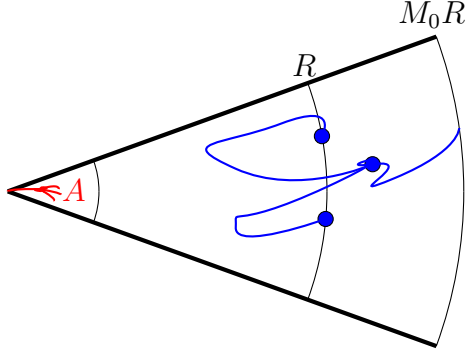
\begin{figure}[h!]
	\begin{tikzpicture}[scale=0.6]%[every shadow/.style={fill=black!30,shadow xshift=0.2ex,shadow yshift=-0.2ex}]
\tikzstyle{redcirc}=[circle,
%circular drop shadow,
draw=black,fill=myred,thin,inner sep=0pt,minimum size=2mm]
\tikzstyle{bluecirc}=[circle,
%circular drop shadow,
draw=black,fill=blue,thin,inner sep=0pt,minimum size=2mm]

%\draw[step=0.4,black,thin] (0,-2) grid (10,2);
%\draw [black,thin] (0,0) to (10,0);
\draw [black,ultra thick] (0,0) to (9.40,3.42);
\draw [black,ultra thick] (0,0) to (9.40,-3.42);

\draw (2,0) arc (0:-20:2) ;
\draw (10,0) arc (0:-20:10) ;
\draw (2,0) arc (0:20:2) ;
\draw (10,0) arc (0:20:10) ;
\draw (7,0) arc (0:20:7) ;
\draw (7,0) arc (0:-20:7) ;

%\draw [blue,thick] plot [smooth, tension=2.5] coordinates {(1.97,-0.3472) (2,-0.2) (2.5,-0.6) (3,0) };
%\draw [blue,thick] plot [smooth, tension=2.5] coordinates {(3,0) (4,1.5)(5,2) (5.5,1.8) (6,0) };
%\draw [blue,thick] plot [smooth, tension=2.5] coordinates {(6,0) (6.5,-3)(9.66,-2.59) };
%\node (v1) at (1.97,-0.3472) [bluecirc] {};
%\node (v2) at (9.66,-2.59) [bluecirc] {};

%\node (v3) at (1.5,1.1) {$m_0 R$};

\node (v4) at (9.3,3.9) {$M_0R$};

\node (v5) at (6.5,2.8) {$R$};

\node (v8) at (8,0.6) [bluecirc] {};
\node (v9) at (1.45,0.02)[red] {$A$};

\node (v6) at (6.97,-0.61) [bluecirc] {};
\node (v7) at (6.89,1.22) [bluecirc] {};
\draw [blue,thick] plot [smooth, tension=2.7] coordinates {(6.97,-0.61) (5,-0.8)(7,0.1) (8,0.6) (8.5,0.3) (9.9,1.4) };
\draw [blue,thick] plot [smooth, tension=2.5] coordinates {(6.89,1.22) (6,1.5)(5,0.5) (8,0.6) };

%\draw [red,thick] plot [smooth, tension=2.7] coordinates {(0,0) (0.3,0.05)(0.5,-0.01)(0.6,-0.2) };
%\draw [red,thick] plot [smooth, tension=2.7] coordinates {(0.5,-0.01)(0.6,0.1) (0.62,0.05) };
%\draw [red,thick] plot [smooth, tension=2.7] coordinates {(0.3,0.05)(0.4,0.01)(0.5,0.03) (0.67,-0.07)};

\draw [red,thick] plot [smooth, tension=2.7] coordinates {(0,0) (0.6,0.05)(0.8,-0.01)(1,-0.2) };
\draw [red,thick] plot [smooth, tension=2.7] coordinates {(0.8,-0.01)(0.9,0.1) (1.12,0.05) };
\draw [red,thick] plot [smooth, tension=2.7] coordinates {(0.6,0.05)(0.7,0.01)(0.8,0.03) (1.17,-0.07)};

%\node (v1) at (0,2) [bluecirc] {};
%\node (v2) at (5.5,0) {$L_0$};
%\node (v3) at (5.5,2) {$L_n$};
%\node (v4) at (5.5,4) {$L_{2n}$};
%\node (v5) at (5.5,8) {$L_N$};
%\node (v6) at (0.5,1) {$V_n$};

%\draw [thick] (-5,0) to (5,0);
%\draw [thick,dashed] (-5,2) to (5,2);
%\draw [thick] (-5,4) to (5,4);
%\draw [thick] (-5,8) to (5,8);
%\draw [thick,blue] (0,0) to (0,2);
%\draw (-1,2) to (0,3);
%\draw (1,2) to (0,3);
%\draw [dashed](-1,2) to (0,1);
%\draw [dashed] (1,2) to (0,1);

%\draw [red,thick] plot [smooth, tension=2.5] coordinates { (0,2) (-0.1,2.2) (0.5,2.5)};
%\draw [green,thick] plot [smooth, tension=2.5] coordinates {(0.5,2.5) (0.7,2.6)(1,2.2) (1.3,3) (2,3.2) (1.5,2.7) (0.5,4) };
%\draw [myblue,thick] plot [smooth, tension=2.5] coordinates {(0.5,4)(2,6)(1,5)(-2,3.5)(-2,8)};

%\draw[step=1cm,gray,very thin] (-0.9,-0.9) grid (7.9,2.9);
\end{tikzpicture}
	\caption{Max coupling occurs before hitting $\partial W_{\theta_1,\theta_2}^{M_0 R}$.\label{fig:maxcoupling}}
\end{figure}
%\note{Make the set $A$ a bit larger.}

For the first sum, note that a similar argument the one in \eqref{eq:LDP} yields 
\[
\begin{aligned}
	&\sum_{\substack{y\in W_{\theta_1,\theta_2}/R \\ \|y\|_2\geq M_0}}\big|\mathbf{P}^{x_1,x_2}_R(B^R_1(T_0)=y)-\mathbf{P}^{x_1,x_2}_R(B^R_2(T_0)=y) \big|\\
 	&\qquad \leq\mathbf{P}^{x_1,x_2}_R(\|B^R_1(T_0)\|_2\geq M_0)+\mathbf{P}^{x_1,x_2}_R(\|B^R_2(T_0)\|_2\geq M_0)\\
 	&\qquad \leq 2e^{-2(\log(4)-1)T_0R^2} + 2\max_{x\in \partial W^R_{\theta_1,\theta_2}}\tbP^{x}_{\theta_1,\theta_2}(\tau_{M_0R}\leq 4T_0R^2).
\end{aligned}
\]

Fixing some $\varepsilon>0$ and defining $M_0= T_0^{1/2+\varepsilon}$, it follows from Lemma \ref{lemma_continuous_to_discrete} (see \eqref{continuous_to_discrete_2}) that the sum is bounded by 
\[
	2e^{-2(\log(4)-1)T_0R^2} + \frac{3C_\varepsilon}{T_0}\leq \frac{4C_\ep}{T_0}\,,
\]
provided $R$ is sufficiently large. 

Next, we turn to estimate the second term, namely 
\begin{equation}\label{eq:whatever}
	\sum_{\substack{y\in \overline{W}_{\theta_1,\theta_2}/R \\ \|y\|_2<M_0} }\big|\mathbf{P}^{x_1,x_2}_R(B^R_1(T_0)=y)-\mathbf{P}^{x_1,x_2}_R(B^R_2(T_0)=y) \big|
\end{equation}

The strategy for bounding the last sum is to use known bounds on the mixing time and total variation distance for random walks on finite graphs, obtained by intersecting scaled version of $\BZ^2$ with some bounded and sufficiently regular domains in $\BR^2$. Note however, that the continuous time, simple random walk in \eqref{eq:whatever} is defined on the cone $W_{\theta_1,\theta_2}/R$, which is not bounded. Thus our first step is to show that the last sum can be well approximated by a corresponding sum for a continuous time, simple random walk in $W_{\theta_1,\theta_2}^{M_0 R}/R$, with $M_0$ chosen to be $T_0^{1/2+\ep}$. 

To this end, for $M_0=T_0^{1/2+\ep}>1$, $R>0$ and $x_1,x_2\in \partial W^R_{\theta_1,\theta_2}$, denote by $\widetilde{\tbP}^{x_1,x_2}_R$ a coupling of two continuous time, simple random walks on $W^{M_0R}_{\theta_1,\theta_2}/R$, denoted $(\widetilde{B}^R_1(t))_{t\geq 0}$ and $(\widetilde{B}^R_2(t))_{t\geq 0}$, defined as follows:
\begin{enumerate}[(a)]
	\item $\widetilde{B}^R_1(t)$ is a continuous time, simple random walk on $W^{M_0R}_{\theta_1,\theta_2}/R$, starting at $x_1/R$, with fixed jump rate of $2R^2$.
\item $\widetilde{B}^R_2(t)$ is a continuous time, simple random walk on $W^{M_0R}_{\theta_1,\theta_2}/R$, starting at $x_2/R$, with fixed jump rate of  $2R^2$.
\item $(\widetilde{B}^R_1(t))_{t\geq 0}$ and $(\widetilde{B}^R_2(t))_{t\geq 0}$ are coupled according to the maximum coupling, see \cite[Appendix A.4.2]{lawler2010random}. 
\end{enumerate}

Furthermore, for $i\in \{1,2\}$ we denote by $\widetilde{N}^R_i(s)$, the number of jumps made by the Markov process $(\widetilde{B}_i^R(t))_{t\geq 0}$ up to time $s$ and for $D\subset W^{M_0R}_{\theta_1,\theta_2}$, define
\[
	\widetilde{\tau}^{i}_{D}=\inf\{t\ge 0, |\widetilde{B}^R_i(t)|\in D/R \}.
\]

\begin{lemma}\label{lem:TV_lem} Fix $\ep>0$ and $T_0>1$, and let $M_0=T_0^{1/2+\ep}$. 
Then for every $R>0$ sufficiently large and every $x_1,x_2\in \partial W^R_{\theta_1,\theta_2}$
\[
	\bigg|\sum_{y\in \overline{W}_{\theta_1,\theta_2}^{M_0R}/R}\big|\mathbf{P}^{x_1,x_2}_R(B^R_1(T_0)=y)-\mathbf{P}^{x_1,x_2}_R(B^R_2(T_0)=y) \big| - d_{\mathrm{TV}}(\widetilde{B}^R_1(T_0),\widetilde{B}^R_2(T_0))\bigg|\leq \frac{16C_\varepsilon}{T_0}
\]
\end{lemma}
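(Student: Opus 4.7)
The plan is to couple $B^R_i$ with $\widetilde B^R_i$ on a common probability space for each $i\in\{1,2\}$ (independently across $i$) so that the two walks agree identically until the first time the unrestricted walk $B^R_i$ leaves the truncation ball $W^{M_0R}_{\theta_1,\theta_2}$. Once such a coupling is available, the difference of the two $L^1$ quantities in the statement can be controlled termwise by a triangle inequality, and both resulting error terms reduce to the exit probability from $W^{M_0R}_{\theta_1,\theta_2}$, which is already controlled by Proposition \ref{proposition hit A} and \eqref{continuous_to_discrete_2} together with the Poisson large-deviation bound used in \eqref{eq:LDP}.

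Concretely, I would drive $B^R_i$ and $\widetilde B^R_i$ from a common rate-$2R^2$ Poisson clock together with a common sequence of uniform directions $e\in\{\pm e_1,\pm e_2\}$. At each tick, each walk attempts to move by $e/R$ and does so if the target vertex lies in its own state space, otherwise stays put; this realizes both chains as continuous-time simple random walks on their respective graphs with the stated rate via the standard laziness convention. Starting both from $x_i/R$, they follow identical trajectories until the first Poisson tick at which the walk sits on an inner boundary vertex of the ball and the chosen direction points into $\partial W^{M_0R}_{\theta_1,\theta_2}$; call this random time $\CT^{(i)}$. By construction $B^R_i(t)=\widetilde B^R_i(t)$ for every $t<\CT^{(i)}$, and $\CT^{(i)}$ is dominated by the first hitting time of $\partial W^{M_0R}_{\theta_1,\theta_2}$ by $B^R_i$. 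Splitting on $\{N^R_i(T_0)\le 4T_0R^2\}$ exactly as in \eqref{eq:LDP} and applying \eqref{continuous_to_discrete_2} with $T=4T_0$ and a slightly smaller exponent $\varepsilon'<\varepsilon$ (chosen so that $(4T_0)^{1/2+\varepsilon'}<T_0^{1/2+\varepsilon}=M_0$ for all large $T_0$) yields $\mathbf{P}^{x_1,x_2}_R(\CT^{(i)}\le T_0)\le 4C_\varepsilon/T_0$ for $R$ sufficiently large.

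For $y\in\overline{W}^{M_0R}_{\theta_1,\theta_2}/R$, extending $\mathbf{P}(\widetilde B^R_i(T_0)=y)=0$ when $y\in\partial W^{M_0R}_{\theta_1,\theta_2}/R$, the coupling gives $\mathbf{P}(B^R_i(T_0)=y,\CT^{(i)}>T_0)=\mathbf{P}(\widetilde B^R_i(T_0)=y,\CT^{(i)}>T_0)$, hence
\[
|\mathbf{P}(B^R_i(T_0)=y)-\mathbf{P}(\widetilde B^R_i(T_0)=y)|\le \mathbf{P}(B^R_i(T_0)=y,\CT^{(i)}\le T_0)+\mathbf{P}(\widetilde B^R_i(T_0)=y,\CT^{(i)}\le T_0),
\]
and summing over $y$ produces $\sum_y|\mathbf{P}(B^R_i(T_0)=y)-\mathbf{P}(\widetilde B^R_i(T_0)=y)|\le 2\mathbf{P}(\CT^{(i)}\le T_0)$. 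Applying the elementary termwise inequality $\big||a-b|-|c-d|\big|\le|a-c|+|b-d|$ with $a,b,c,d$ equal to the four probabilities at $y$ then bounds the difference of the two $L^1$ sums by $2\mathbf{P}(\CT^{(1)}\le T_0)+2\mathbf{P}(\CT^{(2)}\le T_0)\le 16C_\varepsilon/T_0$, which is exactly the claim.

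The step I expect to require the most care is the construction of the coupling: the "fixed jump rate $2R^2$" convention must be reconciled with the fact that vertices on $\partial W^{M_0R-1}_{\theta_1,\theta_2}$ have different degrees in $W_{\theta_1,\theta_2}$ and in $W^{M_0R}_{\theta_1,\theta_2}$, yet both processes must genuinely realize the simple-random-walk dynamics of the lemma's setup. The lazy shared-direction construction above resolves this cleanly because the attempted-jump mechanism is identical on the two graphs and any genuine discrepancy forces $B^R_i$ to cross $\partial W^{M_0R}_{\theta_1,\theta_2}$; the remaining steps are direct consequences of results already proved earlier in the appendix.
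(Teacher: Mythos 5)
Your overall strategy is the same as the paper's: realize $B^R_i$ and $\widetilde B^R_i$ on a common space so that they agree until the walk approaches $\partial W^{M_0R}_{\theta_1,\theta_2}$, and then bound the discrepancy of the time-$T_0$ laws by the probability of reaching that boundary before time $T_0$, which is $O(C_\ep/T_0)$ by \eqref{continuous_to_discrete_2} combined with the Poisson large-deviation bound. The termwise triangle inequality, the bookkeeping of constants, and your care with the exponent (replacing $\ep$ by $\ep'<\ep$ so that $(4T_0)^{1/2+\ep'}\le M_0$) are all fine — indeed slightly more careful than the paper on that last point.

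The gap is exactly in the step you flagged: the ``lazy shared-direction'' coupling does not realize the processes the lemma is about. The paper's walks have \emph{fixed} jump rate $2R^2$ (the jump count $N^R_i(T_0)$ is $\mathrm{Pois}(2T_0R^2)$) and their embedded chains are the degree-normalized simple random walks on $W_{\theta_1,\theta_2}$, resp.\ $W^{M_0R}_{\theta_1,\theta_2}$ — this is forced by the reversibility identity with the factor $\deg(y)/\deg(u)$ used in \eqref{inverse_time_2}. Your attempted-jump construction instead produces a walk whose effective jump rate at a vertex $v$ is $2R^2\deg(v)/4$; since vertices of degree $<4$ occur along the entire sides of the wedge (not merely near the truncation sphere), the marginal law of your process at time $T_0$ differs from that of $B^R_i(T_0)$, so the quantities you bound are not the ones in the statement. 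The fix is easy and is essentially what the paper does: since the two graphs coincide in a neighborhood of every vertex of $W^{M_0R/2}_{\theta_1,\theta_2}$, the stopped processes $\big(B^R_i(t\wedge\tau^i_{\partial W^{M_0R/2}_{\theta_1,\theta_2}})\big)_{t\ge0}$ and $\big(\widetilde B^R_i(t\wedge\widetilde\tau^i_{\partial W^{M_0R/2}_{\theta_1,\theta_2}})\big)_{t\ge0}$ have identical laws, so one may either couple them to be literally equal up to that stopping time, or simply write $\tbP(B^R_i(T_0)=y,\ \tau^i>T_0)=\widetilde\tbP(\widetilde B^R_i(T_0)=y,\ \widetilde\tau^i>T_0)$ and absorb the complementary events into the $O(C_\ep/T_0)$ error, after which your computation goes through verbatim.
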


\begin{proof}
	Fix $x_1,x_2\in\partial W^R_{\theta_1,\theta_2}$ and note that for $i\in \{1,2\}$
\[
\begin{aligned}
	\mathbf{P}^{x_1,x_2}_R\big(\tau^i_{\partial W^{M_0R/2}_{\theta_1,\theta_2}}\leq T_0\big) &\leq \mathbf{P}^{x_1,x_2}_R\big(N_i(T_0)\geq 4T_0 R^2\big) + \mathbf{P}^{x_1,x_2}_R\big(N_i(\tau^i_{\partial W^{M_0R/2}_{\theta_1,\theta_2}})\leq 4T_0R^2\big)\\
	&\leq e^{-2(\log(4)-1)T_0R^2} + \tbP^{x_i}_{\theta_1,\theta_2}(\tau_{\partial W^{M_0R/2}_{\theta_1,\theta_2}}\leq 4T_0R^2)\leq \frac{4C_\ep}{T_0}\,,
\end{aligned}
\]
where in the last step we used Lemma \ref{lemma_continuous_to_discrete}\eqref{continuous_to_discrete_2}. A similar argument shows that 
for $i\in \{1,2\}$
\[
	\mathbf{P}^{x_1,x_2}_R\Big(\tau^i_{\partial W^{M_0R/2}_{\theta_1,\theta_2}}\leq T_0\Big)\leq \frac{4C_\ep}{T_0}\,.
\]

Next, notice that the laws of $(B_i^R(t \wedge \tau^i_{\partial W^{M_0R/2}_{\theta_1,\theta_2}}))_{t\geq 0}$ and $(\widetilde{B}_i^R(t\wedge \widetilde{\tau}^i_{\partial W^{M_0R/2}_{\theta_1,\theta_2}}))_{t\geq 0}$ are equal and therefore
\[
	\mathbf{P}^{x_1,x_2}_R\big(B^R_i(T_0)=y,~\tau^i_{\partial W^{M_0R/2}_{\theta_1,\theta_2}}> T_0\big)
	=\widetilde{\mathbf{P}}^{x_1,x_2}_R\big(\widetilde{B}^R_i(T_0)=y,~\widetilde{\tau}^i_{\partial W^{M_0R/2}_{\theta_1,\theta_2}}> T_0\big).
\]

Combining all of the above, together with the fact that 
\[
	d_{\mathrm{TV}}(\widetilde{B}^R_1(T_0),\widetilde{B}^R_2(T_0))=\sum_{y\in W^{M_0R}_{\theta_1,\theta_2}/R }\big|\widetilde{\mathbf{P}}^{x_1,x_2}_R(\widetilde{B}^R_1(T_0)=y)-\widetilde{\mathbf{P}}^{x_1,x_2}_R(\widetilde{B}^R_2(T_0)=y) \big| 
\]
yields the result. 
\end{proof}

\begin{proof}[Proof of Proposition \ref{proposition_2}]
Combining the estimation for the sum over $y\in y\in (W_{\theta_1,\theta_2}\setminus \overline{W}_{\theta_1,\theta_2}^{M_0R})/R$ together with Lemma \ref{lem:TV_lem} implies that for every $x_1,x_2\in\partial W^R_{\theta_1,\theta_2}$ 
\[
	|\mathbf{P}^{x_1,x_2}_R(\CT>T_0)-d_{\mathrm{TV}}(\widetilde{B}^R_1(T_0),\widetilde{B}^R_2(T_0))|\leq \frac {20C_\ep}{T_0}\,,
\]
where $(\widetilde{B}^R_1(T_0)),\widetilde{B}^R_2(T_0)))$ is distributed according to the coupling $\widetilde{\tbP}^{x_1,x_2}_R$. Therefore, it suffices to show that
\[
	\lim_{T_0\to\infty}\lim_{R\to\infty}\sup_{x_1,x_2\in \partial W^R_{\theta_1,\theta_2}} d_{\mathrm{TV}}(\widetilde{B}^R_1(T_0),\widetilde{B}^R_2(T_0))=0\,.
\]

Recall that $(\widetilde{B}^R_1(t))_{t\geq 0}$ and $(\widetilde{B}^R_2(t))_{t\geq 0}$ are continuous time, simple random walks on $W^{M_0R}_{\theta_1,\theta_2}/R$ with law $\widetilde{\tbP}^{x_1,x_2}_R$, and in particular that they start in $x_1/R$ and $x_2/R$ respectively. 

We finish the proof using one last rescaling. For $i\in \{1,2\}$ and $R>0$, define
\[
	\widehat{B}^R_i(t)= \frac{1}{M_0}\widetilde{B}^R_1(M_0^2t),\qquad \forall t\geq 0.
\]

One can see that $(\widehat{B}^R_i(t))_{t\geq 0}$ for $i\in \{1,2\}$ are continuous time, simple random walks $W^{M_0R}_{\theta_1,\theta_2}/(M_0R)$ with constant jump rate $2(M_0R)^2$. In addition, note that $\overline{W}^{M_0R}_{\theta_1,\theta_2}/(M_0R)$ is also the intersection of the rescaled lattice $(M_0R)^{-1}\BZ^2$ and the continuous wedge $\CD=\CW_{\theta_1,\theta_2}\cap \{\|y\|_2<1\}$. Thus for any $y\in W_{\theta_1,\theta_2}/R$ such that $\|y\|_2<M_0$,
\begin{equation}\label{total_variation_11}
	\mathbf{P}^{x_1,x_2}_R\big(\widetilde{B}^R_i(T_0)=y\big)=\mathbf{P}^{x_1,x_2}_R\bigg(\widehat{B}^R_i\bigg(\frac{T_0}{M_0^2}\bigg)=\frac{y}{M_0}\bigg).
\end{equation}

Let $\widehat{T}_0=T_0/M_0^2 = T_0^{-2\ep}$. Then 
\begin{equation}\label{total_variation_12}
\begin{aligned}
d_{\mathrm{TV}}(\widetilde{B}^R_1(T_0),\widetilde{B}^R_2(T_0))&
=\sum_{\substack{y\in W_{\theta_1,\theta_2}/R \\  \|y\|_2<M_0}}\Big|\mathbf{P}^{x_1,x_2}_R\big(\widetilde{B}^R_1(T_0)=y\big)-\mathbf{P}^{x_1,x_2}_R\big(\widetilde{B}^R_2(T_0)=y\big) \Big|\\
&=\sum_{z\in W^{M_0R}_{\theta_1,\theta_2}/M_0R}\big|\mathbf{P}^{x_1,x_2}_R(\widehat{B}^R_1(\widehat{T}_0)=z)-\mathbf{P}^{x_1,x_2}_R(\widehat{B}^R_2(\widehat{T}_0)=z) \big|.
\end{aligned}
\end{equation}

We are now ready to use the aformentioned known bound on the mixing time for continuous-time random walks on bounded domains in $\BR^2$. Note that $\CD$ is a bounded, Lipschitz domain in $\BR^2$, and therefore, by (2.8) and Theorem 2.11 in \cite{Fan_2014}, for $\ep>0$ sufficiently small, there exists a constant, $C'\in (0,\infty)$ which are independent of $R$ and $z$, such that for all sufficiently large $R$ and $z\in W^{M_0R}_{\theta_1,\theta_2}/M_0R$
\begin{equation}\label{holder}
	(M_0R)^2\big|\mathbf{P}^{x_1,x_2}_R(\widehat{B}^R_1(\widehat{T}_0)=z)-\mathbf{P}^{x_1,x_2}_R(\widehat{B}^R_2(\widehat{T}_0)=z)\big|\le C'\frac{|\widehat{B}^R_1(0)-\widehat{B}^R_2(0)|^{8\ep}}{\widehat{ T}_0^{1+4\ep}}. 
\end{equation}
Recalling that $B^R_i(0)=x_i/(M_0R)$ for $i\in \{1,2\}$ and that $x_1,x_2\in \partial W^{R}_{\theta_1,\theta_2}$, we conclude that 
$|\widehat{B}^R_1(0)-\widehat{B}^R_2(0)|\le C M_0^{-1} = CT_0^{-1/2-\ep}$, and therefore
\[
	(M_0R)^2\big|\mathbf{P}^{x_1,x_2}_R(\widehat{B}^R_1(\widehat{T}_0)=z)-\mathbf{P}^{x_1,x_2}_R(\widehat{B}^R_2(\widehat{T}_0)=z)\big| \leq \frac{C'}{T_0^{2\ep}}
\]
Noting that $\mathrm{card}(W^{M_0R}_{\theta_1,\theta_2}/M_0R)=O((M_0R)^2)$, and that the bound is uniform in $x_1,x_2\in \partial W^R_{\theta_1,\theta_2}$, the proof of Proposition \ref{proposition_2} (and thus of Theorem \ref{theorem:harm} as well) is complete.
\end{proof}

\bibliography{career}
\bibliographystyle{alpha}

\end{document}